\newtheorem{definition}{Definition}[section]
\newtheorem{theorem}{Theorem}[section]
\newtheorem{lemma}[theorem]{Lemma}
\newtheorem{proposition}{Proposition}[section]
\newtheorem{corollary}{Corollary}[theorem]
\theoremstyle{definition}
\newtheorem{remark}{Remark}[section]
\numberwithin{equation}{section}
\newcommand{\Op}{\mathbf{Op}}
\newcommand{\T}{\mathsf{T}}
\newcommand{\PM}{\mathrm{PM}}
\newcommand{\PL}{\mathrm{PL}}
\newcommand{\CM}{\mathrm{CM}}
\newcommand{\R}{\mathcal{R}}
\newcommand{\Id}{\mathrm{Id}}
\def\eps{\varepsilon}
\def\be{\begin{equation}}
\def\ee{\end{equation}}
\def\le{\leqslant}
\def\xR{\mathbb{R}}
\def\xN{\mathbb{N}}
\DeclareMathOperator{\supp}{supp}
\DeclareMathOperator{\Avg}{Avg}
\DeclareMathOperator{\diff}{d}
\def\dx{\diff \! x}
\def\be{\begin{equation}}
\def\ee{\end{equation}}
\def\defn{\mathrel{:=}}
\def\la{\left\vert}
\def\lA{\left\Vert}
\def\bla{\big\vert}
\def\le{\leq}
\def\ra{\right\vert}
\def\rA{\right\Vert}
\def\bra{\big\vert}
\def\xT{\mathbb{T}}
\def\xZ{\mathbb{Z}}
\title{KAM via Standard Fixed Point Theorems}
\author{Thomas Alazard}
\address{CNRS - Centre de Mathématiques Laurent-Schwartz, École Polytechnique, Institut Polytechnique de Paris}
\email{thomas.alazard@polytechnique.edu}
\author{Chengyang Shao}
\address{Department of Mathematics, University of Chicago}
\email{shaoc@uchicago.edu}
\begin{document}
\maketitle
\begin{abstract}
With a mere usage of well-established properties of para-differential operators, the conjugacy equations in several model KAM problems are converted to \emph{para-homological equations} solvable by standard fixed point argument. Such discovery greatly simplifies KAM proofs, renders the traditional KAM iteration steps unnecessary, and may suggest a systematic scheme of finding quasi-periodic solutions of realistic magnitude.
\end{abstract}
\begin{spacing}{1.2}

\section{Introduction}
\subsection{Organization of the Paper}
\emph{Can regularity loss caused by small denominators be overcome by a standard functional analysis argument, instead of a Newtonian/Nash-Moser scheme?}

This has been a long-standing question in the field of dynamical systems, widely assumed to have a negative outcome. Surprisingly, we provide an affirmative answer in this paper: for several model KAM problems, it is possible to reduce the conjugacy equation to standard fixed point form via para-linearization techniques developed by Bony \cite{Bony1981}. As a result, the ``KAM steps" commonly considered essential to the problems are, in fact, unnecessary.

Several consequences follow immediately. Firstly, our new approach entirely bypasses the need for accelerated convergence techniques, a common feature in previous KAM arguments, whether Newtonian or not, used to compensate for regularity loss. It leads to significant simplification of the proof for KAM-type results. Moreover, following standard fixed-point theorems, this approach should allow substantially larger perturbations compared to earlier methods, as the cost of accelerated convergence scheme is unacceptably small magnitude of perturbation. We anticipate that the para-differential approach to KAM theory may shed light on systematic scheme of finding quasi-periodic motions of \emph{realistic, physical} magnitude. Furthermore, a fixed point form of the equation may also yield insights about \emph{non-existence}.

After a brief review of the history of this topic, we present our para-differential approach to KAM theory according to the following arrangement.

Section \ref{Circular} is an essentially self-contained explanation of the core idea of our approach. We use the problem of circular maps studied by Arnold \cite{Arnold1965} as an illustrative model. A Nash-Moser (modified Newton) type proof is briefly outlined. We then introduce the concept of \emph{para-homological equation}, serving as the counterpart to the homological equation in conventional presentation of KAM theory. Formally being the para-differential version of the latter, the regularity gain due to para-differential calculus balances with the loss. This enables one to convert it to a fixed point equation, resembling the \emph{para-inverse equation} introduced by H\"{o}rmander \cite{Hormander1990}. We then provide a heuristic showing that such simplicifaction is not specific to one-dimensional problems, but rather pertains to very broad class of conjugacy problems.

In Section \ref{Quant}, we present all the para-differential calculus tools necessary for our approach: boundedness of para-product operator, composition estimate of para-products, and a refined version of J.-M. Bony's para-linearization theorem. The operator norms and magnitude of remainders are all estimated in a quantitative manner.

Section \ref{InvTor} collects the main results of this paper. We use the quantitative para-differential calculus estimates developed in Section \ref{Quant} to give new proofs of Hamiltonian conjugacy theorems. Instead of implementing any Newtonian algorithm, we will directly write down the para-homological equation and solve it by standard fixed point techniques. This is in contrast to most of the existing literature. 

In order to state these results, we first set up the basic geometric notations. We consider $\xR^n$ as the set of \emph{column} vectors. For simplicity, we identify the space of mappings from $\xT^n$ to $\xR^n$ with the space of tangent vector fields on $\xT^n$. The variable $\theta\in\xT^n$ will be used to denote the variable on the ``abstract torus". The phase space will be $\xT^n\times\xR^n$, the variable of which is denoted as $(x,y)\in \xT^n\times\xR^n$. Vector fields on the phase space will also be considered as \emph{column} vectors. We use $D$ to denote differentiation in $(x,y)$, $\nabla$ to denote gradient with respect to $(x,y)$ (giving rise to column vectors), and $\partial$ to denote differentiation in $\theta$ or $x$. If $u$ is a mapping from $\xT^n$ to the phase space, then the differential $\partial u$ is understood as a matrix of $n$ columns and $2n$ rows:
$$
\partial u=\begin{pmatrix}
\partial u^x \\
\partial u^y
\end{pmatrix}\in\bm{M}_{2n\times n}.
$$

The phase space $\xT^n\times\xR^n$ is endowed with standard symplectic form $\diff\!x\wedge dy$. The corresponding symplectic structure has matrix representation
$$
J=\begin{pmatrix}
0 & I_n \\
-I_n & 0
\end{pmatrix}\in\bm{M}_{2n\times 2n}.
$$
Given a Hamiltonian function $h(x,y)$ on the phase space $\xT^n\times\xR^n$, we denote the Hamiltonian vector field corresponding to $h$ by 
$$
X_h:=J\nabla h
=\begin{pmatrix}
\nabla_y h \\
-\nabla_x h
\end{pmatrix}.
$$

Define the ``flat" embedding of the torus $\xT^n$ to the phase space by 
\begin{equation}\label{zeta_0}
\zeta_0:\xT^n\ni\theta\mapsto\begin{pmatrix}
\theta \\ 0
\end{pmatrix}\in\xT^n\times\xR^n.
\end{equation}
The Hamiltonian function under consideration takes the form
\begin{equation}\label{h(x,y)}
h(x,y):=a_0(x)+\langle a_1(x), y\rangle+\frac{1}{2}\langle Q(x)y,y\rangle+O(|y|^3),
\end{equation}
where $a_0$ is a scalar function, $a_1$ is a $\xR^n$-valued function, and $Q$ is a symmetric matrix valued function. In other words, we consider Hamiltonian functions defined near the ``flat" embedded torus.

We also fix a Diophantine frequency vector $\omega\in\xR^n$: there is a $\sigma>0$ and $\gamma>0$ such that
\begin{equation}\label{Dio}
|k\cdot\omega|\geq\frac{1}{\gamma|k|^\sigma},
\quad
\forall k\in\mathbb{Z}^n\setminus\{0\}.
\end{equation}
Standard measure theoretic arguments ensure the abundance of Diophantine vectors in the sense of measure: if $\sigma>n-1$, then the set of those $\omega\in\xR^n$ satisfying the Diophantine condition (\ref{Dio}) with $\gamma$ exhausting all positive real numbers is a set of first Baire category of full Lebesgue measure. We then write
$$
\nabla_\omega=\sum_{j=1}^n\omega_j\partial_j
$$
for the differentiation along the parallel vector field $\omega$ on $\xT^n$.

We now state the main theorems of the paper. The function spaces involved in these statements are classical continuously differentiable spaces $C^N$, the Zygmund spaces $C^r_*$, and the Sobolev spaces $H^s$. See Subsection \ref{Notation} for the definition of these function spaces.

\begin{theorem}[Existence of Invariant Torus]\label{Thm1} 
Fix $\sigma>n-1$. Let $s>2\sigma+2+n/2+\eps$ be a fixed index, set $r=s-n/2$, and set $N_{s+r}$ to be the least integer such that $N_{s+r}>s+r$. Let $\zeta_0$, $h$ and $\omega$ be as in (\ref{zeta_0})-(\ref{Dio}), and suppose the mean value $\Avg Q$ is an invertible matrix. Set 
$$
M_Q=\max\left(\big|(\Avg Q)^{-1}\big|,|Q|_{L^\infty}\right).
$$
Define ``error of being invariant" and ``error of being integrable" as
\begin{equation}\label{Error1}
e_0:=X_h(\zeta_0)-\nabla_\omega \zeta_0,
\quad
e_1:=Q-\Avg Q.
\end{equation}
There are constants $c_0,c_1,c_2$ depending on $|h|_{C^{N_{s+r}+3}}$ and $M_Q$ with the following property. If
\begin{equation}\label{Small1}
\big\|e_0\big\|_{H^{s+2\sigma+\eps}}
\leq \gamma^{-4}c_0,
\quad
|e_1|_{C^r_*}\leq\gamma^{-2}c_1
\end{equation}
then there is an embedding $u:\xT^n\mapsto\xT^n\times\xR^n$ of class $H^s$, such that 
$$
\|u-\zeta_0\|_{H^s}\leq c_2\gamma^{2}\|e_0\|_{H^{s+2\sigma+\eps}},
$$
and $u(\xT^n)$ is an invariant torus for the flow of $h$:
\begin{equation}\label{ConjNondege}
X_h(u)-\nabla_\omega u=0
\end{equation}
\end{theorem}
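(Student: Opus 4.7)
The plan is to substitute $u=\zeta_0+v$, para-linearize the nonlinearity by Bony's theorem, and cast the conjugacy equation as a para-homological equation $\mathcal{L}v=F(v)$ which is solved by the Banach contraction mapping theorem on a small $H^s$-ball.

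Writing $v=u-\zeta_0$, equation (\ref{ConjNondege}) becomes
\begin{equation*}
(\omega\cdot\partial)v-\bigl[X_h(\zeta_0+v)-X_h(\zeta_0)\bigr]=-e_0.
\end{equation*}
The quantitative Bony para-linearization theorem from Section \ref{Quant} yields a decomposition
$$X_h(\zeta_0+v)-X_h(\zeta_0)=T_{A(\theta)}v+R(v),$$
where $A(\theta)\defn DX_h(\zeta_0(\theta))$ is the Hamiltonian block matrix
$$
A(\theta)=\begin{pmatrix}\partial a_1 & Q \\ -\partial^2 a_0 & -(\partial a_1)^{\T}\end{pmatrix},
$$
and $R(v)$ is a remainder which is quadratic in $v$ and gains one derivative. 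The resulting para-homological equation is
\begin{equation*}
\mathcal{L}v\defn(\omega\cdot\partial)v-T_{A(\theta)}v=-e_0+R(v)=:F(v).
\end{equation*}

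The central step is to establish $\|\mathcal{L}^{-1}f\|_{H^\tau}\les \gamma^{2}\|f\|_{H^{\tau+2\sigma+\eps}}$ for $\tau$ in a suitable range around $s$. I would first invert the constant-coefficient operator $\mathcal{L}_0=(\omega\cdot\partial)-\bar A$ with $\bar A=\Avg A$. Since $\Avg\partial a_1=0$ and $\Avg\partial^2 a_0=0$ by periodicity, $\bar A$ reduces to the nilpotent matrix whose only nonzero block is $\Avg Q$; on each nonzero Fourier mode the system decouples triangularly, so $v^y$ comes from $g^y$ by one application of $(\omega\cdot\partial)^{-1}$ and $v^x$ from $g^x+\Avg Q\,v^y$ by another one. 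Combined with the Diophantine inequality (\ref{Dio}), this gives a bound of order $\gamma^2$ with a loss of $2\sigma+\eps$ derivatives, and is precisely where the non-degeneracy of $\Avg Q$ is used, since it guarantees that the triangular reduction is available. The variable-coefficient remainder $T_{A-\bar A}$ is then absorbed by the para-product composition estimates of Section \ref{Quant}: the smallness of $e_1=Q-\Avg Q$ together with the derivative gain of para-differential calculus is chosen exactly so that $T_{A-\bar A}\mathcal{L}_0^{-1}$ has operator norm $\le 1/2$, and $\mathcal{L}^{-1}$ is obtained by a Neumann series with the same loss and norm.

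Once $\mathcal{L}^{-1}$ is controlled, the fixed point is straightforward. Setting $\Phi(v)\defn\mathcal{L}^{-1}F(v)$ on the ball $B_\rho=\{\|v\|_{H^s}\le\rho\}$ with $\rho=2c_2\gamma^{2}\|e_0\|_{H^{s+2\sigma+\eps}}$, the inhomogeneity $\mathcal{L}^{-1}(-e_0)$ has norm at most $\rho/2$ by the preceding estimate, and the remainder $\mathcal{L}^{-1}R(v)$ contributes $O(\rho^2)$ thanks to the quadratic dependence and derivative gain of $R$; the same two ingredients yield a Lipschitz constant $\le 1/2$ under the smallness (\ref{Small1}). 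Banach's theorem then produces a unique $v\in B_\rho$ satisfying $\mathcal{L}v=F(v)$, and for $\rho\les 1$ the map $u=\zeta_0+v$ remains an embedding. The hardest step, as emphasized in Section \ref{Circular}, is to pin down the para-homological inversion with exactly the $2\sigma+\eps$ derivative loss: the Bony para-linearization gain must compensate the Diophantine loss with no slack, and any mismatch would destroy the self-map property of $\Phi$ and the entire strategy.
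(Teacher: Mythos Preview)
Your proposal has a genuine gap at the step you yourself flag as the hardest: the inversion of $\mathcal{L}=(\omega\cdot\partial)-T_A$ by a Neumann series around $\mathcal{L}_0=(\omega\cdot\partial)-\bar A$ does not close. The operator $\mathcal{L}_0^{-1}$ loses $2\sigma+\eps$ derivatives, while a single para-product $T_{A-\bar A}$ is merely bounded $H^\tau\to H^\tau$ (Proposition~\ref{ContPM}); it gains nothing. Hence $\mathcal{L}_0^{-1}T_{A-\bar A}:H^\tau\to H^{\tau-2\sigma-\eps}$, and no smallness of $|A-\bar A|$ makes the series converge in a fixed Sobolev space. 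The ``derivative gain of para-differential calculus'' you invoke lives only in the remainder $T_aT_b-T_{ab}$ (Proposition~\ref{ContCM}) and in the Bony remainder (Proposition~\ref{Bony}); it is not a property of $T_{A-\bar A}$ itself. Relatedly, Bony's formula gives $T_{DX_h(u)}v$, not $T_{DX_h(\zeta_0)}v$; the difference $T_{A[u]-A[\zeta_0]}v$ is quadratic in $v$ but again gains no regularity, so your remainder $R(v)$ does not actually land in $H^{s+2\sigma+\eps}$, and $\Phi$ fails to be a self-map of $B_\rho$.

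The paper circumvents this by never perturbing off a constant-coefficient operator. It para-linearizes at $u$ and then uses the exact algebraic identity (\ref{F_u(h,u)}) (from \cite{DGJV2005}) to write the para-symbol $D_u\mathscr{F}(h,u)$ as $M[u]\begin{pmatrix}0&S[u]\\0&0\end{pmatrix}M[u]^{-1}-M[u](\omega\cdot\partial)M[u]^{-1}$ plus a term $\bm{B}[\mathscr{F}(h,u)]M[u]^{-1}$ that is \emph{linear in the error itself}. Replacing each product by the corresponding para-product, the main part becomes $T_{M[u]}\bigl(\begin{smallmatrix}0&T_{S[u]}\\0&0\end{smallmatrix}\bigr)T_{M[u]^{-1}}-T_{M[u]}(\omega\cdot\partial)T_{M[u]^{-1}}$, which is \emph{exactly} invertible (Lemma~\ref{LinParaHomo}) once one introduces an auxiliary constant $\mu\in\xR^n$ to absorb the mean; the operators $T_{M[u]}^{\pm1}$ are isomorphisms of $H^s$, so all the loss sits in the two honest applications of $(\omega\cdot\partial)^{-1}$. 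The only variable-coefficient errors are the $\R_{\CM}$ terms from $T_{ab}\to T_aT_b$, and those genuinely gain $r-2\ge 2\sigma+\eps$ derivatives. The residual $T_{\bm{B}[E]M[u]^{-1}}(u-\zeta_0)$ is \emph{not} put into the fixed-point map; after the para-homological equation is solved (by Schauder, using the compact embedding $H^{s+\eps}\hookrightarrow H^s$), the para-linearization formula collapses to $E=T_{\bm{B}[E]M[u]^{-1}}(u-\zeta_0)$, which forces $E=0$ by a Neumann argument in $C^1$, and Lemma~\ref{CounterTerm} then kills $\mu$. Your scheme lacks both the conjugation by $M[u]$ and the separation of the $\bm{B}[E]$ term, and without them the derivative bookkeeping cannot be balanced.
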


We next state a translated existence theorem that allows variation of frequency. 
\begin{theorem}[Translated Conjugacy]\label{Thm2}
Fix $\sigma>n-1$. Let $s>2\sigma+2+n/2+\eps$ be a fixed index, set $r=s-n/2$, and set $N_{s+r}$ to be the least integer such that $N_{s+r}>s+r$. Let $\zeta_0,h,\omega$ be as in Theorem \ref{Thm1}. Without any non-degeneracy assumption for $Q$, define
$$
M_Q=|Q|_{L^\infty}.
$$
Write $h_\xi(x,y)=h(x,y)+\xi\cdot y$ for $\xi\in\xR^n$. Still define ``error of being invariant" and ``error of being integrable" as
\begin{equation}\label{Error2}
e_0:=X_h(\zeta_0)-\nabla_\omega \zeta_0,
\quad
e_1:=Q-\Avg Q.
\end{equation}
There are constants $c_0,c_1$ depending on $|h|_{C^{N_{s+r}+3}}$ and $M_Q$ with the following property. If
\begin{equation}\label{Small2}
\big\|e_0\big\|_{H^{s+2\sigma+\eps}}
\leq \gamma^{-4}c_0,
\quad
|e_1|_{C^r_*}\leq\gamma^{-2}c_1,
\end{equation}
then there is an embedding $u:\xT^n\mapsto\xT^n\times\xR^n$ of class $H^s$ and a constant vector $\xi\in\xR^n$, such that 
$$
\|u-\zeta_0\|_{H^s}\leq c_2\gamma^{2}\|e_0\|_{H^{s+2\sigma+\eps}},
$$
and $u(\xT^n)$ is an invariant torus for the flow of the modified Hamiltonian $h_\xi$:
\begin{equation}\label{ConjTran}
X_{h_\xi}(u)-\nabla_\omega u=0
\end{equation}
\end{theorem}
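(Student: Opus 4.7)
The approach is to adapt the proof of Theorem \ref{Thm1}, with the extra parameter $\xi\in\xR^n$ compensating for the absence of the non-degeneracy assumption on $\Avg Q$. Writing $u=\zeta_0+v$, equation (\ref{ConjTran}) reads
$$
X_h(\zeta_0+v)+\binom{\xi}{0}-(\omega\cdot\partial)(\zeta_0+v)=0.
$$
Applying the refined Bony para-linearization theorem of Section \ref{Quant} to $v\mapsto X_h(\zeta_0+v)$ converts this into the \emph{para-homological equation}
$$
(\omega\cdot\partial)v-\Op\!\big(JD^2h(\zeta_0+v)\big)v-\binom{\xi}{0}=e_0+\R(v),
$$
where $\R(v)$ is a tame remainder gaining $n/2+\eps$ derivatives and at least quadratic in $v$.

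I would then treat $(v,\xi)$ as the unknown of a fixed point problem in $H^s\oplus\xR^n$, separating mean and zero-mean components. Because $\Avg\partial a_1=0$, $\Avg\partial^2 a_0=0$, and $\Avg\nabla a_0=0$, the block structure of $\Avg JD^2h(\zeta_0)$ is purely off-diagonal with $(x,y)$-block equal to $\Avg Q$, and $\Avg e_0^y=0$. Averaging the para-homological equation over $\xT^n$ and imposing the gauge $\bar v=0$ reduces its leading part to the finite-dimensional identity
$$
-\xi=\Avg e_0^x+\Avg\R^x(v),
$$
which determines $\xi$ directly, \emph{without} inverting $\Avg Q$. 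This is precisely where the translation variable $\xi$ plays the role previously filled by $(\Avg Q)^{-1}$ in Theorem \ref{Thm1}, and allows $Q$ to be entirely degenerate.

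The zero-mean part of the para-homological equation is then solved by inverting $(\omega\cdot\partial)$ against the Diophantine condition (\ref{Dio}), at a cost of $\gamma^{-1}$ and $\sigma$ derivatives. Combined with the regularity gain $n/2+\eps$ from the Bony remainder and the $H^s$-boundedness of the para-product quantified in Section \ref{Quant}, this produces a bounded solution map $H^{s+2\sigma+\eps}\to H^s$ with operator norm $\simeq\gamma^{-2}$. The smallness (\ref{Small2}) is then exactly sufficient for the contraction mapping theorem on the ball
$$
\big\{\|v\|_{H^s}\le c_2\gamma^2\|e_0\|_{H^{s+2\sigma+\eps}},\;|\xi|\le C\|e_0\|_{H^{s+2\sigma+\eps}}\big\}\subset H^s\oplus\xR^n,
$$
whose unique fixed point furnishes the desired embedding and translation vector.

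The main technical point is to verify that the extra unknown $\xi$ truly absorbs the averaging obstruction: one must check that the nonlinear corrections $\Avg\R^x(v)$ and $\Avg\R^y(v)$ do not couple back into the normalization $\bar v=0$ in a way that would reintroduce a hidden non-degeneracy requirement. A careful bookkeeping using the Hamiltonian symmetry of $D^2h$ and $J^\T J=\Id$ should ensure that the $y$-average of the remainder is of higher order in $\|v\|_{H^s}$ and hence absorbable, so that the scheme closes with exactly the same $\gamma^{-4}$ threshold on $\|e_0\|_{H^{s+2\sigma+\eps}}$ as in Theorem \ref{Thm1}, yielding the advertised estimate $\|u-\zeta_0\|_{H^s}\le c_2\gamma^2\|e_0\|_{H^{s+2\sigma+\eps}}$.
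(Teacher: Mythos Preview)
Your scheme has a real gap in the treatment of averages, and a secondary gap in how you invert the zero-mean part.

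\textbf{The $y$-average obstruction.} With the gauge $\bar v=0$ and only $\xi$ as a free constant, averaging the para-homological equation in the $y$-direction produces the constraint $0=\Avg\R^y(v)$ (recall $\Avg e_0^y=-\Avg\partial a_0=0$). Saying this quantity is ``of higher order in $\|v\|_{H^s}$ and hence absorbable'' is not enough: there is nothing left to absorb it into, and it is \emph{not} zero along the iteration. The paper does not try to enforce this constraint inside the fixed point. Instead it introduces a second auxiliary constant $\mu\in\xR^n$ and solves the softened equation
\[
\mathscr{F}(h_\xi,u)+\binom{0}{\mu}=0
\]
for the triple $(u,\xi,\mu)$; the linear para-homological equation (Lemma~\ref{LinParaHomo}, Case~2) then determines both $\xi$ and $\mu$ from the right-hand side. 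Only \emph{after} the fixed point (and after the Neumann-series step forces $E=0$) does the symplectic identity of Lemma~\ref{CounterTerm} give $\mu=0$. Your appeal to ``Hamiltonian symmetry of $D^2h$ and $J^\T J=\Id$'' is the right instinct, but it is an a~posteriori identity on solutions, not something that closes the iteration.

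\textbf{The zero-mean inversion.} Writing the principal operator as $(\omega\cdot\partial)-\Op^{\PM}(A[u])$ and then ``inverting $(\omega\cdot\partial)$'' ignores the $T_Qv^y$ contribution in the $x$-equation, which is of order one (only $|Q|_{L^\infty}$ is controlled, not its smallness). The paper handles this by the algebraic factorization through $M[u]$ (identity~(\ref{F_u(h,u)})), which conjugates $D_u\mathscr{F}$ to the upper-triangular block $\left(\begin{smallmatrix}0&S[u]\\0&0\end{smallmatrix}\right)-(\omega\cdot\partial)$ plus a term \emph{linear in} $\mathscr{F}(h,u)$. That linearity is essential: it is exactly what allows the final Neumann-series argument $E=T_{\bm{B}[E]M[u]^{-1}}(u-\zeta_0)\Rightarrow E=0$. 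Your proposal has no analogue of this step, so even if the fixed point closed, you would only have solved the para-homological equation, not the original conjugacy equation.
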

\begin{remark}
It is not hard to generalize the results to other function spaces, for example Zygmund spaces $C^r_*$ or more general Besov spaces. Although only existence results are stated, we can actually prove uniqueness and continuous dependence in both cases under more restrictive smallness assumptions. See the end of Subsection \ref{ProofofThm}. On the other hand, although the theorems are stated for almost integrable systems, the coverage in fact is much broader under standard symplectic geometric consideration. See the discussion in Subsection \ref{Miscellaneous}.
\end{remark}

Let us explain the reason of choosing these particular KAM type theorems for validation of our fixed point approach. Compared to standard account of KAM theorems solving a symplectic diffeomorphism that brings the Hamiltonian function to a normal form, starting from an approximate solution is computationally more convenient in practical applications to realistic systems. See for example \cite{CC1997,CC2007}, where the authors coducted \emph{computer assisted proof} of KAM type results for restricted three body problems of Sun-Jupiter-asteroid with \emph{realistic} physical parameters. It can be expected that our fixed point approach will yield improved results regarding these realistic physical systems, and may greatly advance the estimate of threshold of validity.

In the early stages of KAM theory, Hamiltonian conjugacy theorems were commonly formulated as structural stability theorems of KAM normal forms. This necessitated suitable action-angle variables for the system to reduce it to a perturbative form, usually a challenging task. But if one directly searches for an invariant torus near a given ``approximately invariant torus" in the phase space, such technical issue can be largely bypassed. This idea dates back to \cite{Salamon1986} and was used in, for example, \cite{CC1988,SZ1989,DGJV2005}. 

While the coverage of Theorem \ref{Thm1}-\ref{Thm2} may appear narrower than the results in \cite{DGJV2005}, it remains sufficiently broad to apply to a diverse range. Notably, Theorem \ref{Thm1} is the famous Kolmogorov invariant tori theorem (see \cite{Kolmogorov1954,Arnold1963Proof}) with strongest non-degeneracy condition. In fact, as shown in \cite{BB2015}, once the embedding $u$ is given, a symplectic coordinate close to the original one can be readily constructed near the isotropic torus $u(\xT^n)$, under which $h$ assumes a normal form and $u(\xT^n)$ is flattened to $\xT^n\times\{0\}$. Similarly, Theorem \ref{Thm2} is in fact a special case of the ``théorème de conjugaison tordue" (see Féjoz \cite{Fejoz2016}), which further implies the iso-energetic KAM theorem as a corollary. The more general ``théorème de conjugaison tordue" by Herman was proved in \cite{Fejoz2004}, which is a properly degenerate KAM theorem. It appears that the method of our paper still applies with only minor alteration. For discussion of coverage of Theorem \ref{Thm1}-\ref{Thm2} besides these obvious specific cases, see Subsection \ref{Miscellaneous}.

We briefly discuss the possibility of further generalizations. Theorem \ref{Thm2}, with suitable adaptation and extension, may find compatibility with ``KAM for PDEs", including the Craig-Wayne-Bourgain approach to Melnikov persistency theorem. See for example \cite{Kuksin1987,CW1993,Bourgain1995,Bourgain1997,Bourgain1998} (and \cite{BB2015} for the applicability of this formalism to infinite dimensions). Such an alignment could potentially introduce a novel approach of finding ``large magnitude" KAM solutions to Hamiltonian PDEs. To the author's knowledge, the first para-differential construction of \emph{periodic} solutions of PDEs involving small denominators is carried out by Delort \cite{Delort2012}. Generalization to quasi-periodic solutions is possible. Due to the technicality involved in these extensions, we confine ourselves proving Theorem \ref{Thm1}-\ref{Thm2} in this paper to keep the narration as simple as possible. Extensions to ``KAM for PDEs" are discussed with full detail in the forthcoming work \cite{AS2024}. 

\subsection{A Brief Review of History}\label{Sec1.3}
Given the extensive volume of literature associated to KAM theory and Nash-Moser techniques, it does not seem practical to provide a panoramic review within such limited space. We refer the reader to \cite{AG1991,Bost1986,CP2018,Fejoz2004,Hamilton19821,Llave2001,Poschel2009,Wayne2008} for comprehensive description of these topics. Here, we confine ourselves to a brief overview of those works directly related to the central issue of this paper; that is, whether KAM iteration can be substituded by standard fixed point argument. 

KAM theory is generally recognized to originate from Kolmogorov's 1954 report \cite{Kolmogorov1954}, although the problems it addressed had already interested mathematicians and physicists as early as the time of Poincaré: \emph{does the Poincaré-Lindstedt series converge in presence of small denominators?} Or equivalently, \emph{do quasi-periodic solutions persist upon perturbation?} Kolmogorov suggested a positive answer to this question by introducing a sequence of canonical transformations constructed via a modified Newtonian scheme. Arnold gave a detailed yet technically different proof of Kolmogorov's theorem in \cite{Arnold1963Proof}, and extended its coverage in \cite{Arnold1963Small}. Both Kolmogorov and Arnold worked in space of analytic functions. In the meantime, Moser \cite{Moser1962} was able to replace analyticity by finite differentiability in a parallel context. Hence comes the abbreviation \emph{KAM theory}.

Realizing similarity between the Newtonian scheme used by Kolmogorov-Arnold and that used by Nash \cite{Nash1956} to address the isometric embedding problem, Moser \cite{Moser19661,Moser19662} then extracted this set of methods and applied it to broader classes of nonlinear problems. Related works that set stage for a general statement in graded spaces include Sergeraert \cite{Sergeraert1970} and H\"{o}rmander \cite{Hormander1976,Hormander1977}. In the original context of dynamical conjugacy problems, Zehnder \cite{Zehnder1974,Zehnder1975,Zehnder1976} fitted them into adapted ``hard implicit function theorems". These works are commonly recognized as the origin of the \emph{Nash-Moser implicit function theorem}, which soon found its strength in nonlinear analysis. See \cite{Hamilton19821} for a systematic narration.

Nevertheless, from a practical point of view, if the goal is to systematically find periodic or quasi-periodic motions of realistic magnitude, then KAM/Nash-Moser type methods, based on Newtonian iteration, are far from satisfactory. In fact, even when solving for the zero of a single-variable function, the Newtonian method is well-known to be much more sensitive to initial estimate than ordinary fixed point schemes. The loss of regularity caused by small denominators for conjugacy problems significantly worsens the scenario. As Hénon observed \cite{Henon1966}, the original proof by Arnold yields an infamous $10^{-300}$ upper bound for strength of perturbation, which is physically not acceptable (see also \cite{Laskar2014}):

\begin{quote}{Ainsi, ces théorèmes, bien que d’un très grand intérêt théorique, ne semblent pas pouvoir en leur état actuel être appliqués à des problèmes pratiques, où les
perturbations sont toujours beaucoup plus grandes ...}
\end{quote}
\begin{quote}{Thus, these theorems, although of great theoretical interest, do not seem to be able to be applied to practical problems in their current state, where the perturbations are always much larger ...}
\end{quote}
% For concreteness, let $f:\mathbb{R}^m\supset \bar B_r(0)\mapsto\mathbb{R}^m$ be a continuously differentiable function, such that $f(0)$ is close to zero. We know from standard analysis that $f$ shall have a genuine root provided that certain conditions are met. Quantitatively, if there is an $a>0$ such that $\|f'(x)^{-1}\|\leq a$ for $x\in \bar B_r(0)$, while $|f(0)|<r/a$, then $f(x)=0$ has a solution $x_*\in\bar B_r(0)$. Such existence result is a consequence of \emph{Ekeland's variational principle}; see e.g. Theorem 2 of \cite{Ekeland2011}. If one is only concerned with existence, then the answer is already quite sufficient -- in fact, the solution $x_*$ is the limit of an explicitly constructed sequence. This is because Ekeland's variational principle indeed yields a constructive scheme.

% If $f$ is twice differentiable, then a sequence that rapidly converges to the solution $x_*$ can be constructed using the \emph{Newtonian algorithm}: 
% $$
% x_{k+1}=x_k-\big[f(x_k)\big]^{-1}f(x_k),
% \quad x_0=0.
% $$
% It is a standard fact that the sequence $x_k$ converges at a double exponential speed. However, the initial estimate turns out to be more restrictive to ensure convergence of the Newtonian sequence. Quantitatively, the requirement $|f(0)|<r/a$ is not sufficient; the size of $|f(0)|$ has to be even smaller in terms of $\sup_x\|f''(x)\|$. In other words, compared to the ``first order method", the Newtonian algorithm has to start from an approximate solution that is closer to the actual solution.

However, Hénon also pointed out that numerical evidence suggests the persistence of quasi-periodic solutions even in the presence of strong perturbations. Numerical evidence, though it never serves as rigorous proof, could indicate the potential extension of KAM proofs to encompass stronger perturbations. There have been some successful research efforts aimed at quantifying the validity of KAM proofs. Notable among them are \cite{CC1988,CC1997,CC2007}, where the authors proved KAM stability for certain Hamiltonian systems, including restricted three body problems, with realistic physical parameter. The proof is \emph{quantitative} and \emph{computer assisted} to validate the Newtonian algorithm.

Apart from practical considerations, mathematicians are interested in avoiding Nash-Moser type schemes because they typically offer less insight into the nonlinear structure. For example, the original proof of local existence of the Ricci flow \cite{Hamilton19822} or the mean curvature flow \cite{GageHamilton1986} employed the Nash-Moser method, given the highly degenerate parabolic operators arising from linearization. However, the DeTurck technique introduced in \cite{DeTurck1983} elegantly resolved this degeneracy by fixing geometric gauge. Another example is the quasi-linear perturbation of wave equations. Klainerman \cite{Klainerman1980} \cite{Klainerman1982} proved global well-posedness for quasi-linear perturbation of wave equations in spatial dimension higher than 6. His method was a Nash-Moser type iteration involving smoothing operators that both truncates Fourier modes \emph{and} long-time ranges to deal with loss of decay. But Klainerman and Ponce \cite{KP1983} soon discovered the loss of decay could be compensated by working in Banach spaces of appropriately decaying weights. Later Klainerman \cite{Klainerman1985} introduced the \emph{vector field method}, drastically simplifying the proof of global well-posedness for wave equations. More recent development is also worth mentioning: Hintz-Vasy \cite{HintzVasy2018} proved the nonlinear stability of Kerr-de Sitter spacetime by a Nash-Moser inverse function theorem, which was later found by Fang \cite{Fang2021} to be replaceable by suitable bootstrap argument.

As for the isometric embedding problem itself, 30 years after Nash's original work, Günther \cite{Gunther1989} converted the isometric embedding problem into a standard implicit function form through careful manipulation with Laplacian. Thus the Nash-Moser method is \emph{not} necessary for its original intended context. Almost simultaneously, H\"{o}rmander \cite{Hormander1990} recognized the connection between Nash-Moser methods and para-differential calculus: they both involve dyadic decomposition of nonlinearity. H\"{o}rmander introduced \emph{para-inverse operators} for inverse function problems with loss of regularity. Although the alternative proof for Nash embedding theorem in \cite{Hormander1990} implies slightly weaker result than \cite{Gunther1989}, the general method of the former not only addresses to the vague observation \emph{Nash-Moser technique can usually be replaced by elementary methods}, but also has the potential of applying to other nonlinear problems.

There have also been alternative proofs of KAM type results. Eliasson's paper \cite{Eliasson1996} revisited the Poincaré–Lindstedt series, directly proving its convergence by implementing the delicate cancellations among the coefficients. Other proofs of KAM theorem include \cite{BGK1999}, using renormalization group acting on frequency space; \cite{KDM2007}, using a multidimensional continued fractions algorithm; \cite{Russmann2010,Poschel2011}, which replaced the Newtonian iteration scheme with a ``slowly converging" one; and \cite{BF2013,BF2014} by rational approximation of the frequency vector. Being insightful from different aspects, these alternative proofs are all still based on various types of iteration with improved convergence.

In the recent decade, the idea of replacing modified Newtonian (Nash-Moser or KAM) iteration with para-differential calculus regained some attention. To mention a few, there was a proof of local well-posedness for gravity-capillary water waves using Nash-Moser theorem in \cite{MingZhang2009}. By para-linearizing the system, however, Alazard-Burq-Zuily \cite{ABZ2011} proved the local well-posedness under significantly lower regime of regularity. In the construction of periodic solutions for dispersive differential equations on the torus, the traditional approach often employs Nash-Moser type techniques. But Delort \cite{Delort2012} successfully substituted these techniques with ordinary iterations, aided by para-differential calculus. The idea also echoed in recent advances for the study of \emph{Landau damping}, the parallel of KAM theory in statistical mechanics. Originally proved to exist by a Newtonian scheme \cite{MouVi2011}, the result was then proved in \cite{BMM2016} using para-products in Gevrey spaces instead. 

Regarding the necessity of such replacement, we particularly highlight the work of Herman \cite{Herman1985}, where the conjugacy equation for circular diffeomorphism was elegantly transformed into a fixed-point problem using Schwarz derivatives. Being surprising enough, even more is true. The technique was significantly extended by Marmi, Moussa, and Yoccoz \cite{MMY2005, MMY2012} in their proof of finite codimensional structural stability for interval exchanging maps. Traditional KAM iteration faces limitations in such settings, since the obstructions of solution of the linear homological equation become more severe in function spaces of higher regularity, as demonstrated by Forni \cite{Forni1997, Forni2021}. However, reformulating the conjugacy equation as a fixed-point problem effectively bypasses this issue, a key insight that supports the arguments in \cite{MMY2005, MMY2012}.

On the other hand, the Schwarz derivative technique appears to be highly specific to one-dimensional problems. The analogous problem on stability of invariant surfaces of geodesic flows on translation surfaces, proposed by Forni in \cite{Forni1997}, remained open until the very recent work of Forni himself \cite{Forni2025}. Forni applied the method developed in this article to establish finite codimensional structural stability of geodesic flows on translation surfaces -- a setting where traditional KAM iteration fails due to regularity constraints\footnote{The authors are grateful to Giovanni Forni for bringing this to their attention.}. The crux of Forni's proof lies in transforming the conjugacy equation into a fixed-point form via para-linearization. By resolving the open problem nearly three decades after its original introduction, such achievement might convince the reader of the merit of the para-differential approach introduced in this paper. It is the aim of this paper to rekindle the idea of ``Nash-Moser/KAM replaced by para-differential" and develop a KAM theory from this new perspective.

\subsection{Notation and Convention}\label{Notation}
The notions in this subsection are standard and can be found in any textbook on harmonic analysis, for example \cite{SteMur1993}. Throughout the paper, we do not distinguish functions defined on $\xT^n$ with functions defined on $\xR^n$ that are $2\pi$-periodic with respect to each variable. We represent a distribution $u$ on $\xT^n$ as a Fourier series:
$$
u=\sum_{k\in \xZ^n} \hat{u}(k) e^{ik\cdot x},
$$
where the Fourier coefficient
$$
\hat{u}(k)=\int_{\xT^n}u(x)e^{-ik\cdot x}\dx.
$$
We denote $\Avg u=\hat u(0)$ for the mean value of $u$ if $u\in L^1$. If $u\in L^2$, then the series converges in $L^2$.

We introduce the \emph{Littlewood-Paley decomposition} as follows:

\begin{definition}\label{LP_Decomp_Def}
Fix a function $\varphi\in C^\infty_0(\xR^n)$, 
with support in an annulus $\{1/2\le \la \xi\ra\le 2\}$, so that
$$
\sum_{j=1}^\infty\varphi(2^{-j}\xi)=1-\psi(\xi),
\quad\supp\psi\subset\{|\xi|\leq1\}.
$$
The Littlewood-Paley decomposition of a distribution $u$ on $\xT^n$ is the defined as
\begin{equation}\label{LP_Decomp}
u=\Delta_{0}u+\sum_{j\ge1}\Delta_ju,
\quad\text{where}\quad
\Delta_j u=\sum_{k\in\mathbb{Z}^n}\varphi(2^{-j}k)\hat{u}(k)e^{ik\cdot x},
\quad j\geq1,
\end{equation}
while one fixes $\Delta_0u=\hat u(0)=\Avg u$. 

The partial sum operator $S_j$ is defined as
$$
S_j=\sum_{l\leq j}\Delta_l,\quad j\geq0,
$$
while for $j\leq0$ one just fixes $S_j=\Delta_0$.
\end{definition}
The summand $\Delta_ju$ in Definition \ref{LP_Decomp_Def} is called \emph{j'th building block}. It has Fourier support contained in the dyadic annulus $\{0.5\cdot 2^j\leq |\xi|\leq 2\cdot 2^j\}$. The speed of convergence of $S_ju$ to $u$ reflects the regularity of $u$ -- and it is the content of \emph{Littlewood-Paley theory} to study this connection. 

For an index $s\in \xR$, the \emph{Sobolev space} $H^s(\xT^n)$ consists of those distributions $u$ on $\xT^n$ such that
$$
\lA u\rA_{H^s}:=\left( \sum_{k\in \xZ^n} \big(1+\la k\ra^2\big)^s \bla \hat{u}(k) \bra^2\right)^{1/2} <+\infty.
$$
The space $(H^s,\lA \cdot\rA_{H^s})$ is a Hilbert space. If $s\in \xN$, then
\[
H^s(\xT^n)=\bigl\{u\in L^2(\xT^n) : 
\forall \alpha\in \xN^n,~|\alpha|\leqslant s,~\partial_x^\alpha u\in L^2(\xT^n)\bigr\},
\]
where $\partial_x^\alpha$ is the derivative in the sense of distribution of $u$. Moreover, when $s$ is not an integer, the Sobolev spaces coincide with those obtained by interpolation. If we define
$$
\lA u\rA_{s}^2 \defn \sum_{j=0}^{\infty}2^{2js}\lA \Delta_j u\rA_{L^2}^2,
$$
then it follows from Plancherel theorem that $\lA \cdot\rA_{H^s}$ and $\lA \cdot\rA_{s}$ are equivalent.

We will be using the \emph{Zygmund spaces} (also known as \emph{Lipschitz spaces} in the literature) throughout the paper. For an index $r\in\mathbb{R}$, the Zygmund space $C^r_*$ consists of those distributions $u$ on $\xT^n$ such that
\begin{equation}\label{Zygmund}
|u|_{C^r_*}:=
\sup_{j\geq0} 2^{jr}|\Delta_ju|_{L^\infty}<+\infty.
\end{equation}
Direct manipulation with series implies $H^s\subset C^{s-n/2}_*$ for any $s\in\mathbb{R}$. For non-integer $r>0$, the $C^r_*$-norm is equivalent to the H\"{o}lder norm with index $r$, i.e. the norm
$$
|u|_{C^{[r]}}
+\sum_{|\alpha|=[r]}\sup_{x,y\in\mathbb{T}^n}\frac{\big|\partial^{\alpha}u(x)-\partial^{\alpha}u(y)\big|}{|x-y|^{r-[r]}}.
$$
Here $[r]$ is the integer part of $r$. However, when $r$ is a natural number, the space $C^r_*$ is strictly larger than the classical space of Lipschitz continuous functions.

\section{Circular Map as Illustrative Model}\label{Circular}
In this section, we will use the circular map model studied by Arnold \cite{Arnold1965} to illustrate the core idea of our paper. Although a quite complete global theory under sharp regularity and number-theoretic assumptions is available (see for example \cite{Herman1979,Yoccoz1984,KS1987,KO1989}), we find it illuminating to revisit the perturbation problem for this model from the very beginning.

Throughout this section, we identify the circle $\mathbb{S}^1=\mathbb{R}/2\pi\mathbb{Z}$, and write $\tau_\alpha:x\mapsto x+\alpha$ for the rotation of angle $\alpha$, where $\alpha\in(0,2\pi)$. Addition of arguments will all be understood as modulo $2\pi$. To avoid unnecessary confusion in notation, from now on we shall use $\eta^\iota$ to denote the inversion of a nonlinear mapping $\eta$ (should it exist), and use $a^{-1}$ to denote the reciprocal of a number $a$.

\subsection{Description of the Problem}
Let $\alpha\in(0,2\pi)$ be non-commensurable with $\pi$. Consider the following classical problem:

\begin{itemize}
    \item Suppose $f:\mathbb{S}^1\mapsto\mathbb{S}^1$ is smooth and is close to 0. Is the diffeomorphism $x\mapsto x+\alpha+f(x)$ smoothly conjugate to the rotation $\tau_\alpha$?
\end{itemize}

Of course, an integrablity condition must be posed for $f$. Recall the classical theorem due to Denjoy: 
\begin{theorem}
For a $C^\infty$ orientation-preserving diffeomorphism of $\mathbb{S}^1$ to itself, if its rotation number $\alpha$ is not commensurable with $\pi$, then it is topologically conjugate to the rotation $\tau_\alpha$.
\end{theorem}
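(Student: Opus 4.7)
The plan is to prove Denjoy's theorem in four stages: existence of the rotation number, rigidity of the combinatorial orbit structure, density of orbits via the Denjoy distortion estimate, and construction of the conjugacy by continuous extension. First I would lift the orientation-preserving diffeomorphism $f:\xS^1\to\xS^1$ to a strictly increasing homeomorphism $F:\xR\to\xR$ satisfying $F(x+2\pi)=F(x)+2\pi$, and define the rotation number as $\alpha:=\lim_{n\to\infty}(F^n(x)-x)/n$. A standard subadditivity argument, exploiting that $F^n(x)-x$ has oscillation at most $2\pi$ in $x$, shows this limit exists and is independent of $x$ and of the chosen lift modulo $2\pi\xZ$.

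Second, under the hypothesis that $\alpha$ is irrational, I would establish that the cyclic order on $\xS^1$ of any $f$-orbit $\{f^n(x_0)\}_{n\in\xZ}$ agrees with the cyclic order of the rotation orbit $\{n\alpha \bmod 2\pi\}_{n\in\xZ}$. The key observation is that for irrational $\alpha$ and any integers $p\neq 0$ and $m$, the difference $F^p(x)-x-2\pi m$ is either strictly positive or strictly negative on all of $\xR$, since a zero would produce a periodic point of $f^p$ and force the rotation number to be the rational $m/p$.

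The main technical step, and the principal obstacle, is the Denjoy distortion estimate, which rules out wandering intervals. The claim is that under the $C^2$ hypothesis (more than satisfied by $C^\infty$), no nonempty open interval $I\subset\xS^1$ can have pairwise disjoint forward and backward iterates $\{f^n(I)\}_{n\in\xZ}$. The proof rests on the bound
\[
\sum_{n=0}^{N-1}\bigl|\log f'(f^n(x))-\log f'(f^n(y))\bigr|\le V,
\]
where $V$ is the total variation of $\log f'$ on $\xS^1$ and $x,y\in I$; the sum is bounded by $V$ precisely because the intervals $f^n(I)$ are disjoint, so the contributions telescope across a subset of $\xS^1$. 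Exponentiating yields bounded distortion $e^{-V}\le (f^N)'(x)/(f^N)'(y)\le e^V$, and combining with $\sum_n |f^n(I)|\le 2\pi$ and a judicious choice of closest returns of the orbit to $I$ produces the contradiction. It then follows from minimality that every orbit is dense in $\xS^1$.

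Finally, with orbit density in hand, I would construct the conjugacy $h:\xS^1\to\xS^1$ by fixing a base point $x_0$ and defining $h(f^n(x_0)):=n\alpha \bmod 2\pi$. The combinatorial rigidity step ensures that $h$ is monotone on this dense subset and that its image is dense in $\xS^1$, so it extends uniquely to an orientation-preserving homeomorphism. The semi-conjugation identity $h\circ f=\tau_\alpha\circ h$ holds on the orbit of $x_0$ by construction and extends to all of $\xS^1$ by continuity, completing the proof.
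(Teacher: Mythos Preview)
The paper does not prove this statement at all: it is quoted verbatim as the classical Denjoy theorem, attributed to Denjoy with a footnote, and used only as background to motivate the integrability condition on $f$. There is therefore nothing in the paper to compare your proposal against.

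Your outline is the standard textbook proof and is essentially correct. A few remarks on precision: in the distortion step, the inequality you wrote is really a bound on the single quantity $\bigl|\log (f^N)'(x)-\log (f^N)'(y)\bigr|$ obtained by summing $\bigl|\log f'(f^n(x))-\log f'(f^n(y))\bigr|$ and using that the arcs between $f^n(x)$ and $f^n(y)$ are disjoint, so the total variation of $\log f'$ over them is at most $V$; this is what you intended but the displayed formula as written does not telescope. Also, the phrase ``judicious choice of closest returns'' is doing real work: one needs the denominators $q_n$ of the continued fraction expansion of $\alpha/2\pi$ and the fact that $\sum_{k=0}^{q_n-1}|f^k(I)|$ and $\sum_{k=-q_n+1}^{0}|f^k(I)|$ are both bounded, together with bounded distortion applied to $f^{q_n}$ and $f^{-q_n}$, to force $|I|$ to be simultaneously comparable to arbitrarily small quantities. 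Filling in that step is where the argument has genuine content, but your plan identifies it correctly.
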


If $x+\alpha+f(x)$ has rotation number different from $\alpha$, then by Denjoy's theorem, it definitely cannot conjugate to $\tau_\alpha$. On the other hand, Denjoy's theorem has no implication on smoothness of the conjugation. Therefore the question is not answered by Denjoy's theorem. We modify it as follows:

\begin{itemize}
    \item Suppose $f:\mathbb{S}^1\mapsto\mathbb{S}^1$ is smooth and is close to 0, such that the diffeomorphism $x\mapsto x+\alpha+f(x)$ still has rotation number $\alpha$. Is it smoothly conjugate to the rotation $\tau_\alpha$? 
\end{itemize}

Let us write the hypothetical conjugation as $\eta(x)=x+u(x)$, and try to solve the superficially more general conjugacy equation
$$
\eta(x+\alpha)=\eta(x)+\alpha+f\circ\eta(x)-\lambda
$$
for $\eta$ and the auxiliary real parameter $\lambda$\footnote{It is elementary that if $x\mapsto x+\alpha+f(x)$ has rotation number $\alpha$, and the conjugacy equation has a solution $(\eta,\lambda)$, then $\lambda$ is necessarily zero. Introducing $\lambda$ makes the presentation simpler. See for example Proposition III.4.1.1 of \cite{Herman1979}.}. A simple rearrangement converts this equation to
\begin{equation}\label{ConjRot0}
\Delta_\alpha u=f\circ(\Id+u)-\lambda,
\quad\text{where}\quad
\Delta_\alpha u:=u\circ\tau_\alpha-u.
\end{equation}
We may also compose the equation with $(\Id+u)^\iota$, and reformulate equation (\ref{ConjRot0}) as
\begin{equation}\label{ConjRot1}
\big[\Delta_\alpha u\big]\circ(\Id+u)^\iota=f-\lambda.
\end{equation}
We will discuss the technical difference between (\ref{ConjRot0}) and (\ref{ConjRot1}) in Appendix \ref{AppA}.

In order to clarify the obstacles of solving the conjugacy problem, we may linearize either (\ref{ConjRot0}) or (\ref{ConjRot1}) at $(u,\lambda)=(0,0)$ along direction $(v,\mu)$. Neglecting $f$ as well, we obtain the linear \emph{homological equation}
$$
\Delta_\alpha v+\mu=h.
$$
This linearized equation is solved via Fourier transform: normalizing $\hat v(0)=0$, the unique solution is
$$
v(x)=\sum_{k\neq0}\frac{\hat h(k)e^{ikx}}{e^{ik\alpha}-1},
\quad
\mu=\Avg h,
$$

Consequently, in order that the conjugacy problem is solvable even at the linear level, an additional number-theoretic condition must be posed for the number $\alpha$. We require that $\alpha/\pi$ is Diophantine of type $(\sigma,\gamma)$: there are  $\sigma>0$ and $\gamma>0$ such that
    \begin{equation}\label{Dio0}
    \left|\frac{q\alpha}{\pi}-p\right|\geq\frac{1}{\gamma q^{\sigma}},
    \quad
    \forall p,q\in\mathbb{Z}\setminus\{0\}.
    \end{equation}

Such numbers are abundant. Indeed, Liouville's inequality asserts that if an algebraic number $\alpha$ is of degree $D$, then the inequality $|q\alpha-p|\geq c/q^{D-1}$ must hold for some $c$. A measure-theoretic argument asserts that with $\sigma>1$ fixed, the set of $(\sigma,\gamma)$ Diophantine numbers with $\gamma$ exhausting all positive real numbers form a set of full Lebesgue measure and of first Baire category.

Obviously, if $\alpha$ satisfies the Diophantine condition (\ref{Dio0}), then $\Delta_\alpha^{-1}$ is a well-defined operator mapping functions of mean zero on $\mathbb{S}^1$ to functions of mean zero, satisfying
\begin{equation}\label{ContHs0}
\|\Delta_\alpha^{-1}f\|_{H^s}\leq C\gamma\|f\|_{H^{s+\sigma}},
\quad
\text{if }\Avg f=0.
\end{equation}
Here $C$ is an absolute constant independent of $\gamma$ or $s$. This enables one to solve the linearized equation $\Delta_\alpha v+\mu=h$, at least for very regular right-hand-side. But due to this loss of information on regularity, a usual iterative scheme to solve (\ref{ConjRot1}) will necessarily terminate after finitely many steps.

\subsection{Approximate Right Inverse}
Since the operator $\Delta_\alpha^{-1}$ causes a loss of regularity that cannot be compensated by any known elliptic technique, it is natural to employ modified Newtonian iterative scheme to solve the conjugacy problem (\ref{ConjRot0}). At a first glance it appears to be friendlier than (\ref{ConjRot1}), but in fact its linearized operator only admits an \emph{approximate right inverse}. This brings more technicality for the application of Nash-Moser scheme. Let us describe the strategy of resolving it.

We set the unknwon $U=(u,\lambda)$, and define a mapping
$$
\mathscr{F}(f,U)=\Delta_\alpha u-f\circ(\Id+u)+\lambda.
$$
The linearization of $\mathscr{F}$ at $U=(u,\lambda)$ along $V=(v,\mu)$ is
$$
D_U\mathscr{F}(f,U)V=\Delta_\alpha v-f'\circ(\Id+u)v+\mu.
$$
In the literature of dynamical systems, given $U=(u,\lambda)$, the linearized equation
\begin{equation}\label{RotLin}
D_U\mathscr{F}(f,U)V=h
\end{equation}
for the unknown $V=(v,\mu)$ with a given right-hand-side $h$ is referred to as the \emph{homological equation}. 

In general, one only expects to solve the homological equation \emph{approximately}, since the operator $\Delta_\alpha -f'\circ(\Id+u)$ is hard to invert. To explain the meaning of being ``approximately solvable", we write down a simple yet crucial identity, which is a direct consequence of the definition of $\mathscr{F}$:
\begin{equation}\label{f'F'}
f'\circ(\Id+u)=\frac{\Delta_\alpha u'}{1+u'}-\frac{[\mathscr{F}(f,U)]'}{1+u'}.
\end{equation}
So in fact
\begin{equation}\label{F_U(f,U)}
\begin{aligned}
D_U\mathscr{F}(f,U)V
&=\Delta_\alpha v-\frac{\Delta_\alpha u'\cdot v}{1+u'}
+\frac{[\mathscr{F}(f,U)]'}{1+u'}v+\mu\\
&=(1+u'\circ\tau_\alpha)\Delta_\alpha\left(\frac{v}{1+u'}\right)+\frac{[\mathscr{F}(f,U)]'}{1+u'}v+\mu.
\end{aligned}
\end{equation}
Thus, defining 
$$
\Psi(U)h=\left((1+u')\Delta_\alpha^{-1}\left[\frac{h-\mu(h)}{1+u'\circ\tau_\alpha}\right],\,\mu(h)\right),
\quad
\text{where }
\mu(h)=\frac{\mathrm{Avg}\big((1+u'\circ\tau_\alpha)^{-1}h\big)}{\mathrm{Avg}\big((1+u'\circ\tau_\alpha)^{-1}\big)},
$$
there holds
\begin{equation}\label{ApproxSolu}
D_U\mathscr{F}(f,U)\Psi(U)h-h
=[\mathscr{F}(f,U)]'\Delta_\alpha^{-1}\left(\frac{h}{1+u'\circ\tau_\alpha}-\mu(h)\right).
\end{equation}
Equality (\ref{ApproxSolu}) is the property defining ``approximate solvability": \emph{the linear operator $\Psi(U)$ is an exact right inverse of $D_U\mathscr{F}(f,U)$ at a precise solution $U$ of $\mathscr{F}(f,U)=0$.}

The loss of regularity caused by $\Delta_\alpha^{-1}$ is usually identified as the primary difficulty for solving $\mathscr{F}(f,U)=0$. A modified Newtonian scheme was implemented by Kolmogorov \cite{Kolmogorov1954}, Arnold \cite{Arnold1963Proof,Arnold1963Small,Arnold1965} and Moser \cite{Moser19661,Moser19662}. Being technically different, the general idea shared by them is to define a sequence $U_k$ by inductively solving a sequence of homological equations:
\begin{equation}\label{NewtonHomo}
U_{k+1}=U_k-S_k\big[\Psi(U_k)\mathscr{F}(f,U_k)\big].
\end{equation}
Here the operator $S_k$ is either the restriction operator to a smaller domain in case all functions involved are analytic, or a smoothing operator in case all functions are of finite differentiability. The quadratic convergence property of Newtonian scheme cancels the large constants so produced and ensures the convergence of $U_k$ to a genuine solution. Proving this convergence is where the complexity accumulates.

The issue with approximate invertibility was first noticed by Zehnder \cite{Zehnder1974,Zehnder1975,Zehnder1976}, who introduced adapted hard implicit function theorems (Nash-Moser type theorems) to fit such conjugacy problems into a unified formalism. The proofs of these hard implicit function theorems still rely on modified Newtonian schemes, essentially being (\ref{NewtonHomo}). For the statement and detailed proof of Nash-Moser theorem with either exactly or approximately invertible linearized operator, see \cite{Zehnder1974,Zehnder1975,Zehnder1976,Hamilton19821}. Though taking various forms on various graded spaces, all versions of Nash-Moser type theorems share the common features of \emph{restoring regularity through smoothing operators} and \emph{ensuring convergence through quadratic property of Newtonian algorithm}. While being universal and powerful theoretically, the practical disadvantage of this approach is that the allowed magnitude of perturbation is usually extremely small. A different approach is required if the aim is to find solutions of physically reasonable size.

\subsection{Quick Introduction to Para-differential Calculus}\label{Quick}
Surprisingly, if para-differential calculus is used to attack on (\ref{ConjRot0}), we can drastically simplify the proof of existence of solution. For simplicity, we do not present a panorama of para-differential calculus in this article; instead, we list the propositions that suffice for our exposition, along with brief descriptions of the heuristics behind them. So let us first introduce the \emph{para-product operators}, the central objects of this paper.

\begin{definition}\label{ParaMult_Def}
Fix Littlewood-Paley decomposition as in Definition \ref{LP_Decomp_Def}. Given a distribution $a=a(x)$ on $\xT^n$, the para-product operator $T_a=\Op^\PM(a)$ associated to $a$ is defined as, regardless of the meaning of convergence,
\begin{equation}\label{ParaMult}
T_au=\Op^\PM(a)u:=\sum_{j\geq0} S_{j-3}a\cdot \Delta_ju.
\end{equation}
\end{definition}
For $j\geq1$, notice that the summand $S_{j-3}a\cdot \Delta_ju$ in (\ref{ParaMult}) has its Fourier support contained in the annulus $\{0.25\cdot 2^j \leq|\xi|\leq 2.25\cdot 2^j\}$. A standard construction is the \emph{para-product decomposition}:
$$
au=T_au+T_ua+R_\PM(a,u),
\quad 
R_\PM(a,u)=\sum_{j,k:|j-k|<3}\Delta_ja\cdot\Delta_ku,
$$
as long as the right-hand-side is well-defined in some suitable function space. Such decomposition separates apart \emph{high-low}, \emph{low-high} and \emph{high-high} frequency interactions in a given multiplication $au$, and is therefore widely used in the study of harmonic analysis and dispersive partial differential equations. We refer the reader to Section 4.4 of \cite{Grafakos2009} for a comprehensive description of the role it plays in harmonic analysis. \cite{KPV1993,Kato1995,Staffilani1995} are typical examples of application of these constructions in dispersive PDEs.

The following results on para-product operators are directly cited from \cite{Bony1981} (see also Chapters 8-10 of \cite{Hormander1997}). They are the three fundamental ingredients of para-differential calculus.

\begin{proposition}[Continuity of para-product, rough version]\label{ContRough}
If $a\in L^\infty(\xT^n)$, then $T_a$ is a bounded linear operator from $H^s$ to $H^s$ for all $s\in\mathbb{R}$, and in fact
$$
\|T_a\|_{\mathcal{L}(H^s,H^s)}\lesssim_s|a|_{L^\infty}.
$$
\end{proposition}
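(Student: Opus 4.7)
The plan is to exploit the dyadic Fourier localization of each summand $S_{j-3}a\cdot \Delta_j u$ in the series defining $T_a u$, combined with the equivalent Littlewood--Paley characterization $\lA u\rA_{H^s}\asymp \lA u\rA_s$ already recorded in Subsection \ref{Notation}. First I would show that the smoothing operators $S_{j-3}$ are uniformly bounded on $L^\infty$: each $S_{j-3}$ is the Fourier multiplier associated to the smooth compactly supported symbol $\psi+\sum_{l=1}^{j-3}\varphi(2^{-l}\cdot)$, whose inverse Fourier transform has an $L^1$ norm bounded independently of $j$ by a simple scaling argument. Consequently $|S_{j-3}a|_{L^\infty}\le C|a|_{L^\infty}$ with $C$ independent of $j$, and each summand satisfies
$$
\lA S_{j-3}a\cdot \Delta_j u\rA_{L^2}\le C|a|_{L^\infty}\lA \Delta_j u\rA_{L^2}.
$$

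The second step is the key spectral-support observation already recorded after (\ref{ParaMult}): for $j\ge 1$ the product $S_{j-3}a\cdot \Delta_j u$ is Fourier-supported in an annulus $\{0.25\cdot 2^j\le |\xi|\le 2.25\cdot 2^j\}$. Since the Littlewood--Paley projector $\Delta_k$ is itself supported in the annulus $\{2^{k-1}\le |\xi|\le 2^{k+1}\}$, there exists a fixed integer $N$ (independent of $a$, $u$, $s$) such that $\Delta_k\bigl(S_{j-3}a\cdot\Delta_j u\bigr)=0$ whenever $|j-k|>N$. This reduces each block of $T_a u$ to a sum of at most $2N+1$ pieces:
$$
\lA \Delta_k T_a u\rA_{L^2}\lesssim |a|_{L^\infty}\sum_{|j-k|\le N}\lA \Delta_j u\rA_{L^2}.
$$

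Finally I would insert this pointwise-in-$k$ estimate into the equivalent norm $\lA T_a u\rA_s^2=\sum_k 2^{2ks}\lA \Delta_k T_a u\rA_{L^2}^2$. Since $|j-k|\le N$ implies $2^{ks}\lesssim_s 2^{js}$, the inner sum can be controlled by a discrete convolution of finite range, and either Young's inequality or plain Cauchy--Schwarz in the $(2N+1)$-term sum yields
$$
\sum_k 2^{2ks}\lA \Delta_k T_a u\rA_{L^2}^2\lesssim_s |a|_{L^\infty}^2\sum_j 2^{2js}\lA \Delta_j u\rA_{L^2}^2\lesssim_s |a|_{L^\infty}^2 \lA u\rA_{H^s}^2,
$$
which is the claimed boundedness. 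The $j=0$ term (recall $\Delta_0 u=\mathrm{Avg}\,u$) contributes a constant multiple of $|a|_{L^\infty}\lA u\rA_{L^2}$ and is absorbed harmlessly. The constant's dependence on $s$ enters only through the equivalence of $\lA\cdot\rA_s$ and $\lA\cdot\rA_{H^s}$ and the fixed shift $N$. There is no real obstacle here; the only subtle point is ensuring the $L^\infty$ bound on $S_{j-3}$ is uniform in $j$, which is what the smooth Littlewood--Paley partition of unity was designed to provide (and which would fail for truncated Fourier partial sums).
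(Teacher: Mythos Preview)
Your argument is correct and is in fact the standard direct proof: uniform $L^\infty$ bound on $S_{j-3}a$, annular spectral support of each summand, and almost-orthogonality via a finite-overlap estimate on the Littlewood--Paley blocks. Nothing is missing.

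The paper (see the proof of Proposition~\ref{ContPM}, which is the quantitative restatement of Proposition~\ref{ContRough}) takes a slightly different route. Instead of using only $|S_{j-3}a|_{L^\infty}\le C|a|_{L^\infty}$, it records the stronger derivative bounds $|\partial^\alpha S_{j-3}a|_{L^\infty}\le C_\alpha 2^{j|\alpha|}|a|_{L^\infty}$ and then invokes the Meyer-multiplier machinery of Corollary~\ref{Meyer} together with the spectral-condition relaxation of Corollary~\ref{SpecCondition}. For this particular proposition the derivative bounds are superfluous, since the annular spectral support already gives the almost-orthogonality you exploit directly. What the paper's detour buys is uniformity of presentation: the Meyer-multiplier framework is reused verbatim for the composition estimate (Proposition~\ref{ContCM}) and, more importantly, for the para-linearization remainders (Proposition~\ref{Bony}), where the multipliers $m_j$ are \emph{not} spectrally localized in annuli and the derivative bounds are genuinely needed. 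Your approach is cleaner for this isolated statement; the paper's approach amortizes the setup cost across the section.
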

\begin{proposition}[Composition of para-products, rough version]\label{APRough}
If $a,b\in C_*^r$ with $r>0$, then $T_aT_b-T_{ab}$ is a bounded linear operator from $H^s$ to $H^{s+r}$ for all $s\in\mathbb{R}$. Furthermore, $T_aT_b-T_{ab}$ is continuously bilinear in $a,b\in C_*^r$:
$$
\big\|T_aT_b-T_{ab}\big\|_{\mathcal{L}(H^s,H^{s+r})}
\lesssim_{s,r}|a|_{C^r_*}|b|_{C^r_*}.
$$
\end{proposition}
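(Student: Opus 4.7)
The plan is to reduce the estimate to a single Littlewood--Paley block and exploit an annular localization which is valid for all $s\in\xR$. Writing
\[
T_aT_bu-T_{ab}u=\sum_{j\geq 0}W_j,\qquad W_j:=(S_{j-3}a)\,\Delta_j(T_bu)-S_{j-3}(ab)\,\Delta_ju,
\]
the first step is to verify that each $W_j$ has Fourier support in the annulus $\{0.25\cdot 2^j\leq|\xi|\leq 2.25\cdot 2^j\}$. Indeed, both summands are products of a factor with Fourier support in $\{|\xi|\leq 0.25\cdot 2^j\}$ (namely $S_{j-3}a$ or $S_{j-3}(ab)$) with a factor whose Fourier support lies in $\{0.5\cdot 2^j\leq|\xi|\leq 2\cdot 2^j\}$ (namely $\Delta_j(T_bu)$ or $\Delta_ju$). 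This \emph{annular} (as opposed to ball) localization permits the Littlewood--Paley square-function characterization of $H^{s+r}$ to apply for arbitrary $s\in\xR$, yielding
\[
\Bigl\|\sum_jW_j\Bigr\|_{H^{s+r}}^2\lesssim_{s,r}\sum_j 2^{2j(s+r)}\|W_j\|_{L^2}^2.
\]

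The analytic core is the per-block bound
\[
\|W_j\|_{L^2}\lesssim 2^{-jr}|a|_{C^r_*}|b|_{C^r_*}\sum_{|l-j|\leq C}\|\Delta_lu\|_{L^2}
\]
for some absolute constant $C$. I would split $W_j=B_j+A_j$ with
\[
B_j:=\bigl[(S_{j-3}a)(S_{j-3}b)-S_{j-3}(ab)\bigr]\Delta_ju,\qquad A_j:=(S_{j-3}a)\bigl[\Delta_j(T_bu)-(S_{j-3}b)\Delta_ju\bigr].
\]
For $B_j$, the algebraic identity
\[
(S_{j-3}a)(S_{j-3}b)-ab=-(a-S_{j-3}a)\,b-(S_{j-3}a)(b-S_{j-3}b)+(a-S_{j-3}a)(b-S_{j-3}b),
\]
combined with the tail estimate $\|a-S_{j-3}a\|_{L^\infty}\leq\sum_{l\geq j-2}\|\Delta_la\|_{L^\infty}\lesssim 2^{-jr}|a|_{C^r_*}$ (valid since $r>0$), its analogue for $b$, and the product estimate $|ab|_{C^r_*}\lesssim|a|_{C^r_*}|b|_{C^r_*}$ (which follows from the Bony decomposition $ab=T_ab+T_ba+R(a,b)$ together with Proposition~\ref{ContRough}), yields $\|(S_{j-3}a)(S_{j-3}b)-S_{j-3}(ab)\|_{L^\infty}\lesssim 2^{-jr}|a|_{C^r_*}|b|_{C^r_*}$; hence $\|B_j\|_{L^2}$ satisfies the claimed bound.

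For $A_j$, one unfolds $\Delta_j(T_bu)=\sum_{|l-j|\leq C}\Delta_j((S_{l-3}b)\Delta_lu)$ and writes $S_{l-3}b=S_{j-3}b+(S_{l-3}-S_{j-3})b$. The correction $(S_{l-3}-S_{j-3})b$ is a finite sum of Littlewood--Paley blocks $\Delta_mb$ with $m\sim j$, of $L^\infty$-norm $\lesssim 2^{-jr}|b|_{C^r_*}$, yielding the required decay in that contribution. The remaining diagonal piece simplifies, via the identity $\sum_{|l-j|\leq C}\Delta_j\Delta_lu=\Delta_ju$ (valid since $\Delta_j\Delta_l=0$ for $|l-j|\geq 2$), to a commutator $(S_{j-3}a)\sum_{|l-j|\leq C}[\Delta_j,\,S_{j-3}b]\Delta_lu$; this commutator is bounded in $L^2$ by $\lesssim 2^{-jr}|b|_{C^r_*}\|\Delta_lu\|_{L^2}$ via a Taylor expansion of the symbol $\varphi(2^{-j}\,\cdot\,)$ to order $k>r$, combined with the Bernstein-type estimate $\|\nabla^kS_{j-3}b\|_{L^\infty}\lesssim 2^{j(k-r)}|b|_{C^r_*}$.

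The main obstacle will be the sharp form of this commutator estimate, which requires a careful Taylor expansion of the dyadic cutoff to order strictly greater than $r$; Proposition~\ref{ContRough} alone is insufficient as it only provides $L^2$-continuity. Once the per-block bound on $\|W_j\|_{L^2}$ is in hand, Cauchy--Schwarz applied to the finite sum $\sum_{|l-j|\leq C}\|\Delta_lu\|_{L^2}$ together with the Littlewood--Paley characterization of $H^s$ concludes the proof and delivers the bilinear operator-norm bound with the stated $\lesssim_{s,r}$ dependence.
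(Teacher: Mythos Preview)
Your decomposition $W_j=(S_{j-3}a)\Delta_j(T_bu)-S_{j-3}(ab)\Delta_ju$ and the annular Fourier support claim are correct, and your treatment of $B_j$ is essentially fine (modulo a harmless algebra slip in the identity and the fact that the product estimate $|ab|_{C^r_*}\lesssim|a|_{C^r_*}|b|_{C^r_*}$ is not literally Proposition~\ref{ContRough}). The real problem is the per-block bound on $A_j$, and in fact the per-block bound on $W_j$ itself, which is \emph{false} for $r>1$.

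Concretely, take $a=1$ and $b$ smooth (e.g.\ a single low Fourier mode), so that $S_{j-3}a=1$ and $S_{j-3}b=b$ for large $j$. Then $W_j=\Delta_j(T_bu)-b\,\Delta_ju=[\Delta_j,b]u$, and this commutator has $L^2$ norm of exact order $2^{-j}\|\nabla b\|_{L^\infty}\|\tilde\Delta_ju\|_{L^2}$, not $2^{-jr}$. Your Taylor expansion of $\varphi(2^{-j}\cdot)$ to order $k>r$ does \emph{not} yield the claimed decay: the terms of order $1\le|\alpha|<r$ contribute $2^{-j|\alpha|}\|\partial^\alpha S_{j-3}b\|_{L^\infty}\lesssim 2^{-j|\alpha|}|b|_{C^r_*}$ (since $\|\partial^\alpha S_{j-3}b\|_{L^\infty}\lesssim|b|_{C^r_*}$ when $|\alpha|<r$), and these do not sum to $2^{-jr}$. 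Only the remainder of order $k>r$ gives the desired $2^{-jr}$; the lower-order Taylor terms remain. In the example above, $\sum_jW_j=T_1T_bu-T_bu=0$, so the proposition holds trivially, but only by virtue of cancellations \emph{between} different $W_j$'s that a per-block $L^2$ estimate cannot see.

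The paper's proof (Proposition~\ref{ContCM}) sidesteps exactly this obstruction: rather than bounding each block of $T_aT_bu-T_{ab}u$, it introduces the intermediate object $\sum_q(S_{q-5}a)(S_{q-5}b)\Delta_qu$ and compares $T_aT_bu$ to it by first shifting $S_{p-3}a$ to $S_{p-5}a$ (a harmless $C^r_*$ tail), then invoking the \emph{global} identity $\sum_q v_q=\sum_q\Delta_q(T_bu)$ to collapse the double sum to a finite diagonal, never relying on a commutator bound. Your strategy works as written only for $0<r\le1$; for $r>1$ the per-block estimate must be abandoned and the argument reorganized along the paper's lines, or else the lower-order Taylor terms must be resummed across $j$ into operators that themselves map $H^s\to H^{s+|\alpha|}$ and then iterated, which is substantially more involved than what you outline.
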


\begin{proposition}[Para-linearization, rough version]\label{PLRough}
Suppose $r=s-n/2>0$, $u\in H^s(\mathbb{T}^n;\mathbb{R}^L)$. Let $N_{s+r}$ be the least integer such that $N_{s+r}>s+r$. Suppose $F=F(x,z)\in C^{N_{s+r}+2}(\mathbb{T}^n\times\mathbb{R}^L)$. Then there holds the following para-linearization formula:
$$
F(x,u)-F(x,0)
=T_{F'_z(x,u)}u
+\R_{\PL}\big(F(x,\cdot),u\big)u
\in H^s+H^{s+r},
$$
where $\R_{\PL}\big(F(x,\cdot),u\big)$ is a bounded linear operators $H^s\mapsto H^{s+r}$, so that
$$
\begin{aligned}
\left\|\R_{\PL}\big(F(x,\cdot),u\big)\right\|_{\mathcal{L}(H^s,H^{s+r})}
&\leq |F|_{C^{N_{s+r}+2}}\big(1+\|u\|_{H^s}\big).
\end{aligned}
$$
Moreover, $\R_{\PL}\big(F(x,\cdot),u\big)\in\mathcal{L}(H^s,H^{s+r})$ depends continuously on $u\in H^s$.
\end{proposition}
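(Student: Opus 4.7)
The plan is to use Bony's telescoping decomposition based on Littlewood–Paley blocks. Setting $u_j \defn S_j u$ so that $u_0 = \Avg u$ and $u_{j+1} = u_j + \Delta_{j+1} u$, I would write
\[
F(x,u) - F(x,u_0) = \sum_{j\geq 0}\bigl[F(x,u_{j+1}) - F(x,u_j)\bigr]
= \sum_{j\geq 0} m_j(x)\,\Delta_{j+1}u,
\]
where the integral form of Taylor's theorem gives
\[
m_j(x) \defn \int_0^1 F'_z\bigl(x,u_j + t\Delta_{j+1}u\bigr)\, \diff t.
\]
The extra contribution $F(x,u_0)-F(x,0)$ is a smooth function of a single Fourier mode and thus belongs to every $H^{s+r}$ with a bound controlled by $|F|_{C^1}\|u\|_{H^s}$; it is absorbed into the remainder.

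\textbf{Extracting the para-product.} The main series has $m_j$ essentially frequency-localized below $2^j$ up to a smooth tail, so I would split $m_j = S_{j-3}m_j + (\Id - S_{j-3})m_j$. After reindexing $k=j+1$, the low-frequency piece becomes $\sum_k S_{k-4}m_{k-1}\,\Delta_k u$, which I will compare to the para-product $T_{F'_z(x,u)}u = \sum_k S_{k-3}F'_z(x,u)\,\Delta_k u$. The difference splits naturally as
\[
S_{k-4}\bigl(m_{k-1}-F'_z(x,u)\bigr)\,\Delta_k u + \bigl(S_{k-4}-S_{k-3}\bigr)F'_z(x,u)\,\Delta_k u.
\]
Here $m_{k-1}-F'_z(x,u) = \int_0^1 \bigl[F'_z(x,u_{k-1}+t\Delta_k u)-F'_z(x,u)\bigr]\diff t$ is Lipschitz-controlled by $u-u_{k-1}+O(\Delta_k u) = \sum_{\ell\geq k}\Delta_\ell u$, while the second term is essentially $\Delta_{k-3}F'_z(x,u)$ paired against $\Delta_k u$, which is a quasi-diagonal bilinear form with good summation properties.

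\textbf{Gain of regularity.} To bound each remainder in $H^{s+r}$ I would use that every summand is spectrally supported in an annulus of size $\sim 2^k$, so the Bernstein-type inequality and the square-function characterization of $H^{s+r}$ reduce the task to summing $\ell^2$-norms of $L^\infty\cdot L^2$ products. The factor $2^{kr}$ is produced because each summand is bilinear in quantities involving $\Delta_\ell u$ with $\ell\geq k$ (or with two high-frequency inputs in the case of the high-frequency tail $(\Id-S_{j-3})m_j$), giving a factor $\sum_{\ell\geq k} 2^{-\ell(s-n/2)}\|\Delta_\ell u\|_{L^2} \cdot 2^{\ell(s-n/2)}$ that can be traded against the $H^s$ norm. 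The Lipschitz smoothness of $F'_z$ in $z$ (needing only one more derivative on $F$) controls $m_{k-1}-F'_z(x,u)$ by $|F|_{C^2}$, while the composition $F'_z(x,u)\in C^r_*$ is handled by the classical estimate $|F'_z(x,u)|_{C^r_*}\leq |F|_{C^{N_s+1}}(1+\|u\|_{H^s})$, which is where the regularity threshold $N_s+1>s+1$ on $F$ is used. Keeping the bilinear/linear homogeneity of each piece yields exactly the operator bound $|F|_{C^{N_s+1}}(1+\|u\|_{H^s})$.

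\textbf{Continuous dependence and main obstacle.} Continuous dependence of $\R_{\PL}(F(x,\cdot),u) \in \mathcal{L}(H^s,H^{s+r})$ on $u\in H^s$ will follow by differencing the decomposition in $u$ and repeating the estimates: each term depends continuously on $u$ through $S_j u$, $F'_z(x,u)$ and $m_j(x;u)$, and the series converge uniformly on bounded subsets of $H^s$. The delicate step I expect to be the true obstacle is the uniform control of the high-frequency tail $(\Id-S_{j-3})m_j$: this requires bounding $m_j$ in $C^r_*$ with a constant independent of $j$, which in turn asks for the uniform boundedness of $u_j + t\Delta_{j+1}u$ in $L^\infty$ and a composition estimate for $F \in C^{N_s+1}$ acting on $C^r_*$ functions. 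Once this is in hand, the remainder estimates close up and the continuous dependence follows by a standard limiting argument.
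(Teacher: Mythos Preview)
Your telescope-series approach is the same as the paper's (cf.\ the proof of Proposition~\ref{Bony}, the refined version). Two technical points need correction, though neither reflects a missing idea. First, $F(x,u_0)-F(x,0)$ is not ``a smooth function of a single Fourier mode'': since $F\in C^{N_s+1}$ and $u_0=\Avg u$ is a constant, this difference is only $C^{N_s}$ in $x$, which fails to embed into $H^{s+r}$ once $r>1$; the paper keeps it as the $j=0$ term of the Meyer multiplier series rather than pulling it out separately. Second, and more substantively, your proposed handling of the high-frequency tail via a uniform $C^r_*$ bound on $m_j$ is not strong enough. The product $(\Id-S_{j-3})m_j\cdot\Delta_{j+1}u$ carries no annular spectral support (the high frequencies of $m_j$ are unbounded above), so your square-function argument does not apply there; and a $C^r_*$ bound on $m_j$ yields $|\partial^\beta(\Id-S_{j-3})m_j|_{L^\infty}\lesssim 2^{j(|\beta|-r)}$ only for $|\beta|<r$, whereas Lemma~\ref{SummationLem} needs this for all $|\beta|\le N_s>r$. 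What the paper actually establishes---via Fa\`a di Bruno on the composition $F'_z(x,S_{j-1}u+\tau\Delta_j u)$, exploiting the growth bounds $|\partial^\beta S_j u|_{L^\infty}\lesssim 2^{j(|\beta|-r)_+}|u|_{C^r_*}$ of Proposition~\ref{ZygmundDecay}---is the full pointwise Meyer multiplier estimate $|\partial^\alpha m_j|_{L^\infty}\lesssim 2^{j(|\alpha|-r)}$ for all $|\alpha|\le N_s$; this then feeds into Corollary~\ref{Meyer}, which requires no spectral localization hypothesis on the summands.
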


Precise statements of these results can be found in Section \ref{Quant}. Let us describe how Proposition \ref{ContRough}-\ref{PLRough} should be interpreted. The reason that Bony introduced para-differential calculus in \cite{Bony1981} is to understand microlocal regularity for solutions of nonlinear partial differential equations. For this purpose, it is necessary to try to separate out the \emph{most irregular part} out of a given nonlinear expression $F(x,u)$, which is the content of the para-linearization theorem \ref{PLRough}. The terminology \emph{para-linearzation} refers to the observation that the most irregular part, $T_{F'_z(x,u)u}u$ out of $F(x,u)$, is exactly given by the para-differential operator corresponding to the \emph{linearization} of $F$. 

Bony observed that $T_a$ exhibits advantages over the usual product operator: the operator $M_a\colon u\mapsto au$ is bounded on all Sobolev spaces $H^s$ only if  $a\in C^\infty$, while the boundedness of $T_a$ on $H^s$ relies on the \emph{minimal} regularity assumption that $a\in L^\infty$, as indicated by Proposition \ref{ContRough}. On the other hand, $T_a$ shares the same \emph{algebraic structure} as the usual multiplication $M_a$, up to smoothing remainders. While the mapping $a\mapsto M_a$ obviously is an algebra embedding from the function algebra $C^r_*$ to the operator algebra $\mathcal{L}(H^s,H^s)$ (with $s\leq r$), Proposition \ref{APRough} shows that the mapping $a\mapsto T_a$ is an algebra homomorphism from the function algebra $C^r_*$ to the operator algebra $\mathcal{L}(H^s,H^s)$ modulo smoothing operators for any $s\in\xR$. 

Therefore, Proposition \ref{ContRough}-\ref{PLRough} provides a procedure that enables one to study a nonlinear expression $F(x,u)$ \emph{as if it were linear}: by para-linearization, one separates out the most irregular part, which obeys the same algebraic laws of the usual (linear) pseudo-differential operators.

\subsection{Para-homological Equation}\label{ParaDiffIllu}

In this subsection, we illustrate how the conjugacy equation (\ref{ConjRot0}) can be solved in an extremely simple manner, provided that the para-differential tools from Subsection \ref{Quick} are admitted as standard, as in the field of harmonic analysis and dispersive partial differential equations. To the best of the authors' knowledge, the proof presented below stands as the shortest and simplest proof of KAM theorem for the circular map model available in literature.

\begin{proof}[Simple solution for (\ref{ConjRot0})]
We notice that the general Dirichlet approximate theorem implies $\sigma\geq1$. Fix $s\geq\sigma+1.5+\eps$. Suppose \emph{in a priori} $u\in H^s$, so that by Sobolev embedding, we have $u\in C^r_*$ with $r=s-0.5>1$. Suppose $f$ is sufficiently smooth, for example $f\in C^{N_{s+r}+2}$, where $N_{s+r}$ is the least integer strictly greater than $s+r$. 

By Proposition \ref{PLRough}, we have the para-linearization formula for $\mathscr{F}(f,U)$:
\begin{equation}\label{PLF(f,U)}
\begin{aligned}
\mathscr{F}(f,U)
&=\Delta_\alpha u-f-T_{f'\circ(\Id+u)}u
-\R_{\PL}\big(f(x+\cdot),u\big)u+\lambda\\
&=\Delta_\alpha u-T_{\Delta_\alpha u'/(1+u')}u-f
+T_{[\mathscr{F}(f,U)]'/(1+u')}u
-\R_{\PL}\big(f(x+\cdot),u\big)u+\lambda\\
&=T_{(1+u'\circ\tau_\alpha)}\Delta_\alpha T_{1/(1+u')}u
-f
+T_{[\mathscr{F}(f,U)]'/(1+u')}u
-\R_{\PL}\big(f(x+\cdot),u\big)u+R_1(u)+\lambda.
\end{aligned}
\end{equation}
Here we used the key identity (\ref{f'F'}) again. The last equality in (\ref{PLF(f,U)}) is valid since 
$$
[T_{1/(1+u')}u]\circ\tau_{\alpha}=T_{1/(1+u'\circ\tau_{\alpha})}(u\circ\tau_{\alpha}).
$$
Note that $\tau_\alpha$ commutes with any Fourier multiplier, which is the reason that this equality holds. The remainder 
$$
R_1(u)=\R_\CM(a,b_1)u+\R_\CM(a,b_2)u,
$$
where
$$
a_1=1+u'\circ\tau_\alpha,\quad
b_1=\frac{1}{1+u'\circ\tau_\alpha},\quad
b_2=-\frac{\Delta_\alpha u'}{(1+u')(1+u'\circ\tau_\alpha)},
$$
is produced when replacing $T_{ab}$ by $T_aT_b$ in view of Proposition \ref{APRough}. Formally, we obtain (\ref{PLF(f,U)}) simply by replacing all products with para-products in (\ref{F_U(f,U)}).

We then consider the \emph{para-homological equation} for the unknown $U=(u,\lambda)\in H^s\times\mathbb{R}$:
\begin{equation}\label{RotParaHom}
\begin{aligned}
T_{(1+u'\circ\tau_\alpha)}\Delta_\alpha T_{1/(1+u')}u
&=f+\R_{\PL}\big(f(x+\cdot),u\big)u-R_1(u)-\lambda.
\end{aligned}
\end{equation}
Equation (\ref{RotParaHom}) collects all terms but the one linear in $\mathscr{F}(f,U)$ in (\ref{PLF(f,U)}). It can be directly reduced to fixed point form, which we shall refer as the \emph{para-inverse equation} following H\"{o}rmander \cite{Hormander1990}:
\begin{equation}\label{RotParaInv}
\begin{aligned}
u
&=T_{1/(1+u')}^{-1}\Delta_\alpha^{-1}T_{(1+u'\circ\tau_\alpha)}^{-1}\left[f+\R_{\PL}\big(f(x+\cdot),u\big)u-R_1(u)-\lambda\right].
\end{aligned}
\end{equation}
The equation itself uniquely determines $\lambda$: it should balance out the mean value to make $\Delta_\alpha^{-1}$ applicable.

By Proposition \ref{APRough}, the remainder 
$$
R_1(u)\in H^{s+r-1}\subset H^{s+\sigma+\eps},
$$
is continuous in $u\in H^s$, and vanishes quadratically as $u\mapsto 0\ni H^s$. By proposition \ref{PLRough}, 
$$
\big\|\R_{\PL}\big(f(x+\cdot),u\big)u\big\|_{H^{s+r}}
\leq C_s|f|_{C^{N_{s+r}+2}}\big(1+\|u\|_{H^s}\big).
$$
Due to Proposition \ref{ContRough} and (\ref{ContHs0}), we find that the right-hand-side of (\ref{RotParaInv}) is a well-defined continuous mapping from $H^s$ to $H^{s+\eps}$. The para-differential remainder estimates further ensure that if $|u'|_{L^\infty}\leq \delta$ and $\|u\|_{H^s}\leq \rho\ll1$, then the $H^{s+\eps}$ norm of the right-hand-side of (\ref{RotParaInv}) is controlled by
$$
K(\delta,\rho)\gamma\left(|f|_{C^{N_{s+r}+2}}(1+\rho)
+\rho^2\right),
$$
where $K$ is an increasing function. Thus if $f$ is sufficiently close to 0, for example $|f|_{C^{N_{s+r}+2}}\ll\rho\gamma^{-1}$, the right-hand-side of (\ref{RotParaInv}) will be a continuous mapping from the closed ball $\bar B_\rho(0)\subset H^s$ to itself, with range actually in $H^{s+\eps}$. By the Schauder fixed point theorem, such a mapping has a fixed point $u$. We thus have a solution $U=(u,\lambda)$ of the para-inverse equation (\ref{RotParaInv}), hence the para-homological equation (\ref{RotParaHom}).

Given this solution $U$, the para-linearization formula (\ref{PLF(f,U)}) yields
$$
\mathscr{F}(f,U)
-T_{[\mathscr{F}(f,U)]'/(1+u')}u=0.
$$
If we consider the left-hand-side as a linear operator acting on $\mathscr{F}(f,U)\in C^1$, then by Proposition \ref{ContRough}, it follows that 
$$
\big\|T_{[\mathscr{F}(f,U)]'/(1+u')}u\big\|_{H^{s}}
\lesssim \big|\mathscr{F}(f,U)\big|_{C^1}\|u\|_{H^s}.
$$
So if $|f|_{C^{N_{s+r}+2}}$ 
is sufficiently small (hence the solution $u$ has small $H^s$ norm), a Neumann series argument forces $\mathscr{F}(f,U)=0$.
\end{proof}

\begin{remark}
Schauder's fixed point theorem implies the existence of 
solution in a non-constructive manner. If we require more differentiability of $f$, for example $f\in C^{N_{s+r}+3}$ instead of $C^{N_{s+r}+2}$, then the remainder $\R_{\PL}\big(f(x+\cdot),u\big)u$ will be continuously differentiable in $u$. In fact, we can directly compute (see e.g. \cite{Hormander1990}) the linearization of para-linearization remainder along an increment $v$ as
$$
\begin{aligned}
D_u\left[\R_{\PL}\big(F(\cdot),u\big)u\right]v
&=F'(u)v-T_{F''(u)v}u-T_{F'(u)}v\\
&=T_v\left(\R_{\PL}\big(F'(\cdot),u\big)u\right)+\text{smoother terms}.
\end{aligned}
$$
If $f\in C^{N_{s+r}+3}$, then $u\mapsto\R_{\PL}\big(f(x+\cdot),u\big)u$ is a $C^1$ mapping from $H^s$ to $H^{s+r}$. Given that $f$ is close to $0$ in $C^{N_{s+r}+3}$, a Banach fixed point argument is then applicable, since the right-hand-side of (\ref{RotParaInv}) will have Lipschitz constant $<1$ in $u$. The solution $u$ is thus the limit of a Banach fixed point iteration sequence. Furthermore, if we assume $f\in C^\infty$, then by differentiating the fixed point equation (\ref{ConjRot0}), we obtain a \emph{linear} iterative sequence that converges to the derivative of $u$. This easily yields the additional regularity $u\in C^\infty$.
\end{remark}

Formally, (\ref{RotParaHom}) is the para-differential counterpart of the linearized problem (\ref{RotLin}). Nevertheless, \emph{the gain of regularity through para-linearization balances with the loss}, and thus renders unnecessary \emph{any} convergence-improvement technique. This significantly differs from the prevailing practice in the existing KAM theory literature, with perhaps the only exception being Herman's Schwarz derivative technique; see the discussion in Subsection \ref{Sec1.3}. 

We note that the usual KAM/Nash-Moser iteration (\ref{NewtonHomo}) relies on smoothing operators that are essentially Fourier truncations, which are also used in the construction of para-differential operators as seen in (\ref{ParaMult_Def}). However, smoothing operators are not exploited to its maximal potential in KAM/Nash-Moser iteration since interactions of different frequencies are not used to balance the regularity loss caused by $\Delta_\alpha^{-1}$. This is exactly captured by para-product operators and para-linearization.

Furthermore, we emphasize that para-product operators, together with the remainders in para-linearization, all admit very explicit expressions, essentially involving only convolutions. Consequently, the size of the perturbation $f$ in (\ref{RotParaInv}), hence in the original conjugacy problem (\ref{ConjRot0}), does not have to be unreasonably small, as the proof of Banach or Schauder fixed point theorem indicates. 

\subsection{Refinement in Regularity}\label{RefinementRot}
If we take into account that $x+u(x)$ is a diffeomorphism of $\mathbb{S}^1$, then the loss of regularity can be managed more delicately using \emph{para-composition operator} introduced by Alinhac \cite{Alinhac1986}.

\begin{definition}\label{ParaComp}
Let $\chi:\xT^n\mapsto\xT^n$ be a Lipschitz diffeomorphism. The para-composition operator $\chi^\star$ associated to $\chi$ is defined by
$$
\chi^\star F:=\sum_{j\geq0}(S_{j+N}-S_{j-N})\big((\Delta_jF)\circ\chi\big).
$$
Here $N$ depends only on $|\partial\chi|_{L^\infty}$.
\end{definition}

In fact, the para-composition operator admits several equivalent definitions, see for example Appendix A of Chapter 2 in \cite{Taylor2000} or Section 5 of \cite{AM2009}; but we do not aim to elaborate them within our current paper. We directly cite several results concerning regularity of para-composition operators. The proof can be found in, for example, \cite{Alinhac1986}, Appendix A of Chapter 2 in \cite{Taylor2000}, Section 3 of \cite{ngu2016}, or Section 5 of \cite{Said2023}. Roughly speaking, the para-composition operator captures the ``irregularity" of the para-linearization remainder $F(u)-T_{F'(u)}u$.

\begin{proposition}\label{PCRough}
If $\chi:\xT^n\mapsto\xT^n$ is a Lipschitz diffeomorphism, then $\chi^\star$ is a bounded linear operator from $H^s$ to $H^s$ for all $s\in\mathbb{R}$:
$$
\|\chi^\star\|_{\mathcal{L}(H^s,H^s)}
\leq K_s\big(|\partial\chi|_{L^\infty},|\partial\chi^{-1}|_{L^\infty}\big).
$$
Furthermore, given $F\in H^s(\xT^n)$, the mapping $\chi\mapsto\chi^\star F$ is continuous from $W^{1,\infty}$ to $H^s$.
\end{proposition}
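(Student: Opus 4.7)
The plan is to exploit that each Littlewood-Paley block $\Delta_j F$ has Fourier support in the dyadic annulus $\{|\xi|\sim 2^j\}$, and that composition with a bi-Lipschitz diffeomorphism approximately preserves this localization—the frequency content of $(\Delta_j F)\circ\chi$ remains essentially concentrated in $\{2^{j-N}\lesssim|\xi|\lesssim 2^{j+N}\}$ for $N$ determined by $|\partial\chi|_{L^\infty}$ and $|\partial\chi^{-1}|_{L^\infty}$. The spectral shell $S_{j+N}-S_{j-N}$ in the definition of $\chi^\star$ is precisely designed to capture this effective support.

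The technical heart is a frequency-localization lemma: for $N$ large enough,
\[
\big\|\bigl(\Id-(S_{j+N}-S_{j-N})\bigr)\bigl[(\Delta_j F)\circ\chi\bigr]\big\|_{L^2}\le C_M\,2^{-Mj}\|\Delta_j F\|_{L^2},\qquad M\ge 0,
\]
with $C_M$ depending on $M$ and the bi-Lipschitz data only. I would prove it by non-stationary phase on
\[
\widehat{(\Delta_j F)\circ\chi}(\eta)=\int\widehat{\Delta_j F}(\xi)\int e^{i(\xi\cdot\chi(x)-\eta\cdot x)}\dx\,d\xi,
\]
using that the phase gradient $\xi^{\T}\partial\chi(x)-\eta^{\T}$ admits a lower bound $\gtrsim\max(|\eta|,|\xi|/|\partial\chi^{-1}|_{L^\infty})$ once $|\eta|$ is either much larger than $2^j|\partial\chi|_{L^\infty}$ or much smaller than $2^j/|\partial\chi^{-1}|_{L^\infty}$. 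The main obstacle is that naive integration by parts produces second derivatives of $\chi$, which are unavailable in the Lipschitz setting; the standard remedy is to mollify $\chi$ at scale $\sim 2^{-j}$, perform the non-stationary phase analysis for the regularized map, and absorb the discrepancy $(\Delta_j F)\circ\chi-(\Delta_j F)\circ\chi_{(j)}$ using the Lipschitz bound on $\chi$ together with the bandlimited modulus of continuity of $\Delta_j F$.

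Granted the lemma, the Sobolev bound follows from Littlewood-Paley almost-orthogonality. Each block in the series defining $\chi^\star F$ has Fourier support in an annulus of width $\sim 2^j$ that overlaps only $O(N)$ neighbours, so
\[
\|\chi^\star F\|_{H^s}^2\lesssim\sum_{j\ge 0}2^{2js}\|(\Delta_j F)\circ\chi\|_{L^2}^2,
\]
and the change of variable $y=\chi(x)$ converts each summand into $|\det\partial\chi^{-1}|_{L^\infty}\|\Delta_j F\|_{L^2}^2$, giving $\|\chi^\star F\|_{H^s}\le K_s\|F\|_{H^s}$ with $K_s$ depending only on the bi-Lipschitz constants.

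For the continuity $\chi\mapsto\chi^\star F$ from $W^{1,\infty}$ to $H^s$, I would fix $F\in H^s$ and a sequence $\chi_k\to\chi$ in $W^{1,\infty}$. A single $N$ works uniformly for all large $k$ thanks to the uniform bi-Lipschitz bounds. For each fixed $j$, $(\Delta_j F)\circ\chi_k\to(\Delta_j F)\circ\chi$ in $L^2$, since $\Delta_j F$ is smooth with gradient controlled by Bernstein's inequality and $\chi_k\to\chi$ uniformly. The dyadic sum is dominated uniformly in $k$ by $\|F\|_{H^s}^2$ via the previous step, so splitting into a finite head (where termwise convergence applies) and a tail (controlled by the uniform bound, hence made arbitrarily small by choosing the cutoff) yields $\chi_k^\star F\to\chi^\star F$ in $H^s$ by a standard dominated convergence argument.
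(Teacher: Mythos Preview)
The paper does not give its own proof of this proposition; it cites Alinhac, Taylor, Nguyen and Said, adding only the remark that continuity in $\chi$ ``can be read off from the proof with a mere application of the dominated convergence theorem.'' So there is no in-paper argument to compare against, but your proposal deserves a comment on efficiency.

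Your frequency-localization lemma is not needed for Proposition~\ref{PCRough}. The spectral localization of each summand $(S_{j+N}-S_{j-N})[(\Delta_j F)\circ\chi]$ is \emph{built into the definition}: the projector $S_{j+N}-S_{j-N}$ already forces the Fourier support into $\{2^{j-N}\lesssim|\xi|\lesssim 2^{j+N}\}$, regardless of where the frequencies of $(\Delta_j F)\circ\chi$ actually live. Hence almost-orthogonality is automatic, and all that remains is the trivial bound $\|(S_{j+N}-S_{j-N})[(\Delta_j F)\circ\chi]\|_{L^2}\le C\|(\Delta_j F)\circ\chi\|_{L^2}\le C|\det\partial\chi^{-1}|_{L^\infty}^{1/2}\|\Delta_j F\|_{L^2}$ by change of variables. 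Summing with weights $2^{2js}$ gives the $H^s$ bound directly. The delicate non-stationary-phase-with-mollification step you outline is the technical heart of a \emph{different} statement---namely that $F\circ\chi-\chi^\star F$ is smoother, which is part of Proposition~\ref{PLCRough}---not of the mere boundedness asserted here.

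Your continuity argument is correct and is exactly what the paper's remark hints at: termwise $L^2$ convergence for each fixed $j$ (since $\Delta_j F$ is a trigonometric polynomial on $\xT^n$) combined with the uniform-in-$k$ domination $2^{2js}\|(\Delta_j F)\circ\chi_k\|_{L^2}^2\le C\,2^{2js}\|\Delta_j F\|_{L^2}^2$ yields convergence of the sum by dominated convergence.
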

\begin{proposition}\label{PLCRough}
Suppose $\rho>1$, $\chi:\mathbb{T}^n\mapsto\mathbb{T}^n$ is a $C^\rho_*$ diffeomorphism, and $F\in H^{s+1}(\mathbb{S}^1)$ with $r:=s-n/2>0$. Then the following para-linearization formula holds:
$$
F\circ\chi=\chi^\star F+T_{F'\circ\chi}\chi+\R_{A}(\chi)F,
$$
where 
$$
\big\|\R_{A}(\chi)F\big\|_{H^{\rho+\min(\rho,r)}}
\leq K_s\big(|\chi|_{C^\rho_*},|\chi^{-1}|_{C^1}\big)\|F\|_{H^{s+1}}.
$$
Furthermore, given $F\in H^s(\xT^n)$ and $\varepsilon>0$, the mapping $\chi\mapsto\R_{A}(\chi)F$ is continuous from $C^{\rho}_*$ to $H^{s-\varepsilon}$.
\end{proposition}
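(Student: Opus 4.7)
The plan is to follow the Alinhac--Taylor derivation of the para-composition formula (see \cite{Alinhac1986} or Chapter 2 of \cite{Taylor2000}), combining a dyadic decomposition of $F$ with a careful frequency-localization argument for the composition $F\circ\chi$. Set $f_j := \Delta_j F$ so that $F = \sum_{j\geq 0} f_j$. The very definition of $\chi^\star$ in Definition \ref{ParaComp} yields the clean splitting
\begin{equation*}
F\circ\chi - \chi^\star F
= \underbrace{\sum_{j\geq 0} (I - S_{j+N})(f_j\circ\chi)}_{=:R_H}
+ \underbrace{\sum_{j\geq 0} S_{j-N}(f_j\circ\chi)}_{=:R_L}.
\end{equation*}
The goal is then to show $R_H\in H^{\rho+\min(\rho,r)}$ and to identify $R_L$ with $T_{F'\circ\chi}\chi$ modulo a smoother term of the same regularity, with quantitative dependence on $|\chi|_{C^\rho_*}$ and $|\chi^{-1}|_{C^1}$.

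For the high-frequency piece $R_H$, the essential estimate is a non-stationary-phase bound: since $f_j$ has Fourier support in $\{|\xi|\sim 2^j\}$ and $\chi$ is a $C^\rho_*$-diffeomorphism, repeated integration by parts in the Fourier representation of $\Delta_k(f_j\circ\chi)$ against the cutoff $\varphi(2^{-k}\cdot)$ would yield the decay
\begin{equation*}
\|\Delta_k(f_j\circ\chi)\|_{L^2}
\lesssim K(|\chi|_{C^\rho_*},|\chi^{-1}|_{C^1})\,
2^{-(k-j)\rho}\,\|f_j\|_{L^2},
\qquad k\geq j+N.
\end{equation*}
Combined with $\|f_j\|_{L^2}\lesssim 2^{-j(s+1)}\|F\|_{H^{s+1}}$, summing the squared contributions in $k$ and then in $j$ places $R_H$ in $H^{\rho+r}\subset H^{\rho+\min(\rho,r)}$ with the required constant. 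The threshold $N$ built into Definition \ref{ParaComp} is chosen precisely so that all Fourier modes of $f_j\circ\chi$ above $2^{j+N}$ come only from the $C^\rho_*$-irregularity of $\chi$, so this decay is essentially tight.

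For the low-frequency piece $R_L$, I would Taylor expand
\begin{equation*}
f_j(\chi(x)) = f_j(x) + f_j'(x)\cdot(\chi(x)-x) + Q_j(x),
\end{equation*}
where $Q_j$ carries a second derivative of $f_j$. Because $f_j$ has frequency $\sim 2^j$ while $S_{j-N}$ truncates below $2^{j-N}$, the first term is annihilated, and $S_{j-N}Q_j$ can be bounded by the same non-stationary-phase argument applied to $f_j'$, producing a contribution in $H^{\rho+\min(\rho,r)}$. The surviving term $S_{j-N}\bigl(f_j'\cdot(\chi-x)\bigr)$ is a low-high paraproduct block; summing in $j$ and invoking Proposition~\ref{APRough} to pass from bilinear products to paraproducts, together with Proposition~\ref{PLRough} applied to $F'\circ\chi$ to identify $\sum_j T_{f_j'}$ with $T_{F'\circ\chi}$ modulo $C^r_*$-controlled remainders, reconstructs exactly $T_{F'\circ\chi}\chi$ plus a residue of the claimed regularity. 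The main obstacle in this step is the combinatorial bookkeeping of the para-calculus corrections: each replacement (bilinear to para, $f_j'$ to $F'\circ\chi$, Taylor identity to $\chi$) contributes an error whose cumulative effect must still lie in $H^{\rho+\min(\rho,r)}$ with polynomial dependence on the diffeomorphism data. The appearance of $\min(\rho,r)$ (rather than $\rho+r$) reflects the fact that the paraproduct reorganization can only borrow from the weaker of the two smoothness scales carried by $\chi$ and $F$.

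For the continuity of $\chi\mapsto\R_A(\chi)F$ from $C^{\rho+\varepsilon}_*$ to $H^s$, I would combine the continuity of $\chi\mapsto\chi^\star G$ in $W^{1,\infty}$ (Proposition~\ref{PCRough}), the bilinear continuity of paraproduct composition errors (Proposition~\ref{APRough}), and the continuity of para-linearization remainders in their arguments (Proposition~\ref{PLRough}). Each summand of $R_H$ and $R_L$ depends continuously on $\chi$ through these ingredients, and the small gain $\varepsilon>0$ is used to upgrade uniform $C^\rho_*$-boundedness of the dyadic series to equicontinuity via a standard truncation-and-approximation argument: split the series at indices $j\leq J$ and $j>J$, use termwise continuity on the head and a uniform tail bound from the $C^{\rho+\varepsilon}_*$-norm (which provides an arbitrarily small tail) to transfer continuity to the limit.
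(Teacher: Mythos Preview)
The paper does not give its own proof of this proposition; it is quoted from the literature (Alinhac, Taylor, Nguyen, Said) with only the remark that continuity in $\chi$ follows from dominated convergence. So I can only compare your outline against the standard arguments in those references.

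Your overall architecture---splitting $F\circ\chi-\chi^\star F$ into a high-frequency tail $R_H$ and a low-frequency head $R_L$, and handling $R_H$ by the $C^\rho_*$-decay of $\Delta_k(f_j\circ\chi)$ for $k\gg j$---is correct and matches the Alinhac--Taylor template.

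The gap is in your treatment of $R_L$. You Taylor-expand $f_j(\chi(x))$ around $x$, which produces $f_j'(x)$; summing in $j$ this yields $F'(x)$, not $F'(\chi(x))$. Your proposed identification ``$\sum_j T_{f_j'}$ with $T_{F'\circ\chi}$ via Proposition~\ref{PLRough}'' is not what that proposition says, and there is no mechanism by which $T_{F'}$ and $T_{F'\circ\chi}$ differ only by an $r$-smoothing operator when $\chi$ is far from the identity. Moreover, the surviving block $S_{j-N}\bigl(f_j'\cdot(\chi-x)\bigr)$ is a \emph{diagonal} interaction (the frequency-$2^j$ factor $f_j'$ must meet $\Delta_{\sim j}(\chi-x)$ to land below $2^{j-N}$), not a low-high para-product block; Proposition~\ref{APRough} does not reorganize it into $T_{F'\circ\chi}\chi$. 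The standard fix in the cited references is to expand around a \emph{regularized} diffeomorphism, writing
\[
f_j(\chi)=f_j(S_j\chi)+f_j'(S_j\chi)\cdot(\chi-S_j\chi)+Q_j.
\]
Now $f_j(S_j\chi)$ is frequency-localized near $2^j$ (so its $S_{j-N}$-projection is genuinely small), $f_j'(S_j\chi)$ differs from $f_j'\circ\chi$ by a term controlled through $|\chi-S_j\chi|_{L^\infty}\lesssim 2^{-j\rho}$, and $(\chi-S_j\chi)$ is precisely the high-frequency tail of $\chi$; summing over $j$ then legitimately rebuilds $T_{F'\circ\chi}\chi$ up to an $H^{\rho+\min(\rho,r)}$ remainder. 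Your continuity argument is fine in spirit once this decomposition is corrected.
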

\begin{remark}
In fact, continuous dependence of para-composition on the diffeomorphism $\chi$ was not explicitly proved in the literature cited above. However, one can directly read it off from the proof with a mere application of the dominated convergence theorem.
\end{remark}
We can give a refined proof of sovability of (\ref{ConjRot0}). We still assume in a priori that $u\in H^s$, where $s\geq\sigma+1.5+\varepsilon$, and $f\in H^{s+1}$. We then pick $r=\rho=s-1/2$. By Proposition \ref{PLCRough}, with $\chi=\Id+u$, we have the para-linearization formula for $\mathscr{F}(f,U)$:
\begin{equation}\label{PLF(f,U)1}
\begin{aligned}
\mathscr{F}(f,U)
&=\Delta_\alpha u-\chi^\star f-T_{f'\circ\chi}\chi
-\R_A(\chi)f+\lambda\\
&=\Delta_\alpha u-T_{\Delta_\alpha u'/(1+u')}u-\chi^\star f
+T_{[\mathscr{F}(f,U)]'/(1+u')}u
-\R_A(\chi)f+\lambda\\
&=T_{(1+u'\circ\tau_\alpha)}\Delta_\alpha T_{1/(1+u')}u
-\chi^\star f
+T_{[\mathscr{F}(f,U)]'/(1+u')}u
-\R_A(\chi)f+R_1(u)+\lambda.
\end{aligned}
\end{equation}
Here the computations are almost the same as those in (\ref{PLF(f,U)}): the remainder $R_1(u)\in H^{s+r-1}\subset H^{s+\sigma+\eps}$ and vanishes bilinearly when $H^s\ni u\mapsto 0$, while the para-composition remainder $\R_A(\chi)f\in H^{2s-1}$ by Proposition \ref{PLCRough}.

We then consider the \emph{para-homological equation} for the unknown $U=(u,\lambda)\in H^s\times\mathbb{R}$:
\begin{equation}\label{RotParaHom1}
\begin{aligned}
T_{(1+u'\circ\tau_\alpha)}\Delta_\alpha T_{1/(1+u')}u
&=\chi^\star f+\R_A(\chi)f-R_1(u)-\lambda.
\end{aligned}
\end{equation}
Equation (\ref{RotParaHom1}) is in fact just (\ref{RotParaHom}) in a delicate form. The para-inverse equation (\ref{RotParaInv}) then takes an equivalent form
\begin{equation}\label{RotParaInv1}
\begin{aligned}
u
&=T_{1/(1+u')}^{-1}\Delta_\alpha^{-1}T_{(1+u'\circ\tau_\alpha)}^{-1}\big(\chi^\star f+\R_A(\chi)f-R_1(u)-\lambda\big).
\end{aligned}
\end{equation}
Here the $\lambda$ is still uniquely determined to balance out the mean value, making $\Delta_\alpha^{-1}$ applicable.

Due to Proposition \ref{PCRough} and \ref{PLCRough}, we find that the right-hand-side of (\ref{RotParaInv1}) is a well-defined continuous mapping from $H^s$ to $H^{s+\eps}$. The para-differential remainder estimates further ensure that if $\|u\|_{H^s}\leq \delta$ and $|u'|_{L^\infty}\leq \rho$, then the $H^{s+\eps}$ norm of the right-hand-side of (\ref{RotParaInv}) is controlled by
$$
K(\delta,\rho)\|f\|_{H^{s+\sigma+\eps}}+C\delta^2,
$$
where $K$ is an increasing function. Thus if $\|f\|_{H^{s+\sigma+\eps}}$ is sufficiently small, the right-hand-side of (\ref{RotParaInv1}) will be a continuous mapping from the closed ball $\bar B(0,\delta)\subset H^s$ to itself, with compact range. By the Schauder fixed point theorem, such a mapping has a fixed point. The rest of the argument will then be identical with Subsection \ref{ParaDiffIllu}.

Sharing the same spirit with Subsection \ref{ParaDiffIllu}, the para-composition approach just presented obviously requires less regularity for $f$ when $\sigma\leq2.5$, at the price of being technically more complicated. To keep our narration as simple as possible, we do not elaborate this approach within this paper. See Appendix \ref{AppA} for more discussion.

\subsection{Heuristics about Generality}\label{HeuGen}
It turns out that the aforementioned method is valuable not only in one-dimensional scenarios. In fact, it encompasses all the features of general conjugacy problems of interest in dynamical systems. Roughly speaking, this is the para-differential calculus version of Zehnder's heuristics in Section 5 of \cite{Zehnder1975}. We refer the reader to Zehnder's paper for his original argument, and present here the para-differential version.

The formalism of conjugacy problems is as follows. Given an open set $\mathfrak{B}$ in a Banach space and an infinite dimensional Lie group $\mathfrak{G}$, suppose there is a differentiable group action (``dynamical system")
$$
\Phi:\mathfrak{B}\times\mathfrak{G}\mapsto\mathfrak{B}.
$$
Then $\Phi$ is subjected to the algebraic relation
\begin{equation}\label{Homomorphism}
\Phi(f,\Id)=f,
\quad
\Phi(f,g_2\circ g_1)=\Phi\big(\Phi(f,g_2),g_1\big).
\end{equation}
This is simply a result of definition of group homomorphism. 

Now suppose $\mathfrak{N}\subset\mathfrak{B}$ is a closed linear submanifold, usually being the \emph{normal forms} of the dynamical system. The conjugacy problem then reads: \emph{is $\mathfrak{N}$ structurally stable in this dynamical system?} That is, if $f\in\mathfrak{B}$ is close to $\mathfrak{N}$, is it possible to find an element $g\in\mathfrak{G}$ close to the identity, so that $\Phi(f,g)\in\mathfrak{N}$?

To answer this question, introduce the unknwon $u=(\gamma,\mathfrak{n})$, where $\gamma$ belongs to the Lie algebra of $\mathfrak{G}$ and $\mathfrak{n}\in\mathfrak{N}$. Suppose $\mathcal{E}$ is a local chart\footnote{This local chart usually is not the exponential map, since the exponential map is not surjective for an infinite dimensional Lie group. See for example, Section 42 of \cite{KM1997}.} near the identity of the group $\mathfrak{G}$. Define a mapping
$$
\mathscr{F}(f,u)=\Phi(f,\mathcal{E}(\gamma))-\mathfrak{n}.
$$
The conjugacy problem is then equivalent to finding zeroes of $\mathscr{F}$. Suppose that an approximate solution $u_0=(0,\mathfrak{n}_0)$ is already found, that is $\mathscr{F}(f,u_0)$ is close to 0. We then aim to look for $u$ close to 0 such that $\mathscr{F}(f,u)=0$

Introduce the map $\chi_\gamma$ on the Lie algebra of $\mathfrak{G}$ by $\mathcal{E}(\chi_\gamma(\gamma_1))=\mathcal{E}(\gamma)\circ\mathcal{E}(\gamma_1)$. Then (\ref{Homomorphism}) becomes
$$
\Phi\big(f,\chi_\gamma(\gamma_1)\big)
=\mathscr{F}\big(\Phi(f,\gamma),\gamma_1\big).
$$
Denote by $v=(\psi,\mathfrak{m})$ the increment of $u=(\gamma,\mathfrak{n})$, and set $L_\gamma v=(D\chi_\gamma(0)\psi,\mathfrak{m})$. Differentiating with respect to $u$, evaluating at $\gamma_1=0$, we obtain
$$
D_u\mathscr{F}(f,u)L_\gamma v
=D_u\mathscr{F}\big(\Phi(f,\gamma),(0,\mathfrak{n})\big)v.
$$ 
Thus by Taylor's formula,
$$\begin{aligned}
D_u\mathscr{F}(f,u)L_\gamma v
&=D_u\mathscr{F}\big(\mathfrak{n},(0,\mathfrak{n})\big)v
+\left(D_u\mathscr{F}\big(\Phi(f,\gamma),(0,\mathfrak{n})\big)
-D_u\mathscr{F}\big(\mathfrak{n},(0,\mathfrak{n})\big)\right)v\\
&=D_u\mathscr{F}\big(\mathfrak{n},(0,\mathfrak{n})\big)v
+\bm{B}\big(\mathscr{F}(f,u),v\big),
\end{aligned}
$$
where $\bm{B}$ vanishes bilinearly near (0,0). Equivalently,
\begin{equation}\label{DiffIdentity}
\begin{aligned}
D_u\mathscr{F}(f,u)v
&=D_u\mathscr{F}\big(\mathfrak{n},(0,\mathfrak{n})\big)L_\gamma^{-1}v
+\bm{B}\big(\mathscr{F}(f,u),L_\gamma^{-1}v\big).
\end{aligned}
\end{equation}

The idea of para-differential approach to the conjugacy equation $\mathscr{F}(f,u)=0$ is then based on the algebraic identity (\ref{DiffIdentity}). Suppose the Banach space $\mathfrak{B}$ and Lie group $\mathfrak{G}$ are realized as functions and (finite-dimensional) diffeomorphisms, and the mapping $\mathscr{F}(f,u)$ is a classical nonlinear differential operator acting on $u$, in the sense that it is a function of $(u,\partial u,\partial^2u,\cdots)$. Then the {para-linearization theorem of Bony} \cite{Bony1981} should imply
\begin{equation}\label{ParaLinF(f,u)}
\begin{aligned}
\mathscr{F}(f,u)
&=\mathscr{F}(f,u_0)+T_{D_u\mathscr{F}(f,u)}(u-u_0)
+\bm{R}_1(u-u_0)\\
&=T_{D_u\mathscr{F}\big(\mathfrak{n},(0,\mathfrak{n})\big)}T_{L_\gamma^{-1}}(u-u_0)
+\mathscr{F}(f,u_0)+\bm{R}_2(u-u_0)
+\bm{B}\big(T_{\mathscr{F}(f,u)},T_{L_\gamma^{-1}}(u-u_0)\big).
\end{aligned}
\end{equation} 
Here $u=(\gamma,\mathfrak{n})\simeq(0,\mathfrak{n}_0)$, and $\bm{R}_1,\bm{R}_2$ are smoother remainders produced by para-linearization. The somewhat abused notation $\bm{B}\big(T_{\mathscr{F}(f,u)},T_{L_\gamma^{-1}}(u-u_0)\big)$ stands for the replacement of ``product" $\bm{B}\big({\mathscr{F}(f,u)},L_\gamma^{-1}(u-u_0)\big)$ by the corresponding ``para-product" $\bm{B}\big(T_{\mathscr{F}(f,u)},T_{L_\gamma^{-1}}(u-u_0)\big)$, should $\bm{B}$ involve only usual functional operations.

The following step is to make sure that the \emph{para-homological equation}
\begin{equation}\label{ParaHomHeu}
T_{D_u\mathscr{F}\big(\mathfrak{n},(0,\mathfrak{n})\big)}T_{L_\gamma^{-1}}(u-u_0)
+\mathscr{F}(f,u_0)+\bm{R}_2(u-u_0)=0
\end{equation}
is solvable. We will actually consider its equivalent fixed point form, refered as the \emph{para-inverse equation}:
$$
(u-u_0)
=-T_{L_\gamma^{-1}}^{-1}T_{D_u\mathscr{F}\big(\mathfrak{n},(0,\mathfrak{n})\big)}^{-1}\big(\mathscr{F}(f,u_0)+\bm{R}_2(u-u_0)\big),
$$
and make sure that it is solvable. 

For most conjugacy problems, this really is the case, \emph{even if inverting $D_u\mathscr{F}\big(\mathfrak{n},(0,\mathfrak{n})\big)$ causes a loss of regularity due to small denominator}. In fact, that loss of regularity can be compensated by the additional smoothness of the para-differential remainder $\bm{R}_2(u-u_0)$. Thus, if $\mathscr{F}(f,u_0)$ is sufficiently smooth, the right-hand-side of the fixed point equation defines a mapping with \emph{no} loss of regularity (e.g. from $H^s$ to $H^s$). We note that this is the para-differential version of Zehnder's ``finding approximate right inverse". Consequently, assuming $\mathscr{F}(f,u_0)$ is close enough to 0, a standard fixed point argument should produce a solution $u\simeq u_0$ of (\ref{ParaHomHeu}).

Once a solution $u$ to the para-homological equation (\ref{ParaHomHeu}) is found, the first three terms in the right-hand-side of (\ref{ParaLinF(f,u)}) are then cancelled, and the para-linearization formula then becomes
$$
\mathscr{F}(f,u)
=\bm{B}\big(T_{\mathscr{F}(f,u)},L_\gamma^{-1}(u-u_0)\big)
\approx T_{\mathscr{F}(f,u)}L_\gamma^{-1}(u-u_0).
$$
Now since a para-differential operator $T_a$ depends only on very mild regularity of the symbol $a$, this equation must imply $\mathscr{F}(f,u)=0$ by a Neumann series argument, provided that $u$ is close to $u_0$. 

\emph{To conclude, solution of the para-homological equation in fact solves the original conjufacy problem.}

Of course, the steps outlined above are merely heuristics. In practice, each of the assumptions they rely upon must be validated in a case-by-case fashion. Nevertheless, given that these intuitive deductions pertain to very general scenarios, it is reasonable to expect that they adequately cover several well-studied conjugacy problems. We point out, omitting most details, that the following model problems all fit into this formalism: conjugation of standard map (see e.g. Section 2 of \cite{Llave2001}), structural stability of parallel vector field on torus (see e.g. \cite{Poschel2011}), existence of invariant torus of symplectic map (see e.g. \cite{DGJV2005}), and conjugation of Hamiltonian vector fields. We will elaborate on the last one in Section \ref{InvTor}.

The idea of replacing a linearized equation by its para-differential version dates back to H\"{o}rmander's paper \cite{Hormander1990}, where H\"{o}rmander proposed the terminology \emph{para-inverse}. H\"{o}rmander only considered nonlinear mappings with \emph{exactly} invertible linearized operators, instead of \emph{approximately} invertible ones. Nevertheless, it is still appropriate to attribute the core idea of the aforementioned heuristics to \cite{Hormander1990} (and \cite{Zehnder1975}).

\section{Quantitative Para-product and Para-linearization Estimates}\label{Quant}

In this section, we present quantitative estimates concerning para-products and para-linearization. They are refinements of standard contents in modern harmonic analysis; see for example, \cite{Bony1981,Meyer1980,meyer1980remarques,Meyer1981}, Chapter 9-10 of \cite{Hormander1997}, or Chapter 3-4 of \cite{Met2008}. From a harmonic analysis point of view, para-differential operators fall into special subclasses of pseudo-differential operators of ``forbidden" type (1,1), but still enjoy a symbolic calculus as usual (1,0) pseudo-differential operators do. However, to keep prerequisites minimal, we attempt to give neither comprehensive exposition of general (1,1) pseudo-differential operators nor the symbolic calculus, but confine ourselves with what is necessary. 

\subsection{Meyer Multipliers}
Before providing the quantitative para-differential operator estimates, let us first state some auxiliary results relating to \emph{Meyer multipliers}. Formally, given a sequence $\{m_j(x)\}_{j\geq0}$ of functions on $\xT^n$, we call the linear operator
$$
f(x)\mapsto\sum_{j\geq0} m_j(x)\cdot(\Delta_jf)(x)
$$
the Meyer multiplier associated to $\{m_j\}_{j\geq0}$. Here the $\Delta_j$'s are the Littlewood-Paley building block operators introduced in (\ref{LP_Decomp}). The fundamental theorem for such a linear operator is the following:
\begin{proposition}[Meyer multiplier]\label{Meyer}
Let $s,r\in\mathbb{R}$, and suppose that $s+r>0$. Let $N_{s+r}$ be the smallest integer such that $N_{s+r}>s+r$. Suppose $(m_j)_{j\geq0}$ is a sequence in $L^\infty(\xT^n)$, such that for multi-indices $\alpha$ with $|\alpha|\leq N_{s+r}$, there holds
$$
\big|\partial^\alpha m_j\big|_{L^\infty} 
\leq M_\alpha 2^{j(|\alpha|-r)}.
$$
Then the linear operator
$$
f\mapsto\sum_{j=0}^\infty m_j\Delta_j f
$$
is bounded from $H^s(\xT^n)$ to $H^{s+r}(\xT^n)$. The norm of this operator is estimated by
$$
C_s\sum_{\beta:|\beta|\leq N_s}M_\beta.
$$
\end{proposition}

We need a lemma concerning Sobolev regularity of series to aid the proof of Proposition \ref{Meyer}. It is of some independent interest per se.
\begin{lemma}\label{SummationLem}
Let $s>0$, let $N_s$ be the smallest integer such that $N_s>s$. There is a constant $C_s$ with the following properties. If $\{f_k\}_{k\geq0}$ is a sequence in $H^{N_s}(\xT^n)$, such that for any multi-index $\alpha$ with $|\alpha|\leq N_s$, there holds
$$
\|\partial^\alpha f_k\|_{L^2}\leq 2^{k(|\alpha|-s)}c_k,\quad (c_k)\in \ell^2(\mathbb{N}),
$$
then $\sum_k f_k\in H^s(\xR^n)$ and 
$$
\left\|\sum_{k\geq0}f_k\right\|_{H^s}^2
\leq C_s\sum_{k\geq0}c_k^2.
$$
\end{lemma}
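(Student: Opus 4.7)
The plan is to estimate each Littlewood--Paley piece $\Delta_j\bigl(\sum_k f_k\bigr)$ in $L^2$ by splitting the sum over $k$ at the threshold $k=j$, handling one range by a trivial size bound and the other by a Bernstein-type frequency localization estimate, and then packaging the resulting bilinear sum by Young's convolution inequality on $\ell^2$.

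First, I would verify that $\sum_k f_k$ is a well-defined $L^2$ function: taking $\alpha=0$ in the hypothesis gives $\|f_k\|_{L^2}\le 2^{-ks}c_k$, and Cauchy--Schwarz yields $\sum_k 2^{-ks}c_k\le (\sum_k 2^{-2ks})^{1/2}\|(c_k)\|_{\ell^2}<\infty$ since $s>0$. Thus $f:=\sum_k f_k$ is a convergent $L^2$ series, and interchanging $\Delta_j$ with the summation is legitimate.

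Next, for each dyadic block $j\ge 1$, I would estimate $\|\Delta_j f_k\|_{L^2}$ in two regimes. For $k\ge j$, the crude estimate
\[
2^{js}\|\Delta_j f_k\|_{L^2}\le 2^{js}\|f_k\|_{L^2}\le 2^{-(k-j)s}c_k
\]
suffices. For $k<j$, I invoke the standard Bernstein-type inequality: since $\Delta_j$ is Fourier-localized in the annulus $|\xi|\sim 2^j$, one has $\|\Delta_j g\|_{L^2}\lesssim 2^{-jN_s}\sup_{|\alpha|=N_s}\|\partial^\alpha g\|_{L^2}$; applied to $g=f_k$ and combined with the hypothesis,
\[
2^{js}\|\Delta_j f_k\|_{L^2}\lesssim 2^{j(s-N_s)}\,2^{k(N_s-s)}c_k=2^{-(j-k)(N_s-s)}c_k.
\]
Summing, I get $2^{js}\|\Delta_j f\|_{L^2}\lesssim \sum_{k\ge 0}\kappa_{j-k}c_k$, where $\kappa_m:=2^{ms}$ for $m\le 0$ and $\kappa_m:=2^{-m(N_s-s)}$ for $m>0$ belongs to $\ell^1(\xZ)$ precisely because $s>0$ and $N_s-s>0$ (which is where the integer $N_s>s$ enters).

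Finally I would apply Young's inequality $\ell^1*\ell^2\hookrightarrow \ell^2$ to conclude that the sequence $\bigl(2^{js}\|\Delta_j f\|_{L^2}\bigr)_{j\ge 0}$ lies in $\ell^2$ with norm bounded by $\|\kappa\|_{\ell^1}\|(c_k)\|_{\ell^2}$; the $j=0$ term is an obvious mean-value estimate and absorbed into the constant. Using the equivalence of $\|\cdot\|_{H^s}$ with $\|\cdot\|_s$ recalled in Subsection~\ref{Notation} then yields the claim with $C_s\sim \|\kappa\|_{\ell^1}^2$. There is no real obstacle in this proof: the only mild point to track is that the choice of $N_s$ as the least integer exceeding $s$ is exactly what makes both geometric series in $\kappa$ converge, which justifies the precise regularity threshold in the statement.
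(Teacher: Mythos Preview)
Your proof is correct and follows essentially the same approach as the paper: split the sum over $k$ at the threshold $k=j$, use the trivial $L^2$ bound for $k\ge j$ and Bernstein's inequality for $k<j$, then conclude via a discrete convolution estimate (the paper calls this an ``elementary double summation argument,'' which is exactly your Young's inequality $\ell^1*\ell^2\hookrightarrow\ell^2$). Your added verification that $\sum_k f_k$ converges in $L^2$ is a nice touch that the paper leaves implicit.
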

\begin{proof}
If $j\leq k$, then by taking $\alpha=0$, obviously
$$
\|\Delta_jf_k\|_{L^2}\leq\|f_k\|_{L^2}\leq 2^{-ks}c_k.
$$
If $j>k$, then by Bernstein's inequality,
$$
\|\Delta_jf_k\|_{L^2}
\leq C_s2^{-N_sj}\sum_{|\alpha|=N_s}\|\partial^\alpha f_k\|_{L^2}
\leq C_s2^{-ks}2^{N_s(k-j)}c_k.
$$
Thus, for $f=\sum_kf_k$, we have
\begin{equation}\label{Block}
2^{js}\|\Delta_j f\|_{L^2}
\leq C_s\sum_{k:k<j}2^{(N_s-s)(k-j)}c_k
+\sum_{k:k\geq j}2^{(j-k)s}c_k.
\end{equation}
Given that $s>0$ and $N_s>s$, using an elementary double summation argument, the right-hand-side form an $\ell^2$ sequence (with index $j$), with norm controlled by $C_s\|(c_k)\|_{\ell^2}$. Thus $f\in H^s$.
\end{proof}

\begin{proof}[Proof of Proposition \ref{Meyer}]
In fact, for $|\alpha|\leq N_{s+r}$, each summand $m_j\Delta_j f$ satisfies
$$
\begin{aligned}
\big\|\partial^\alpha(m_j\Delta_jf)\big\|_{L^2}
&\leq C_\alpha \sum_{\beta:\beta\leq\alpha}
\big|\partial^\beta m_j\big|_{L^\infty}\big\|\partial^{\alpha-\beta}\Delta_jf\big\|_{L^2} \\
&\leq C_\alpha 2^{j(|\alpha|-r-s)}\left(\sum_{\beta:\beta\leq\alpha}M_\beta\right)2^{js}\|\Delta_jf\|_{L^2}.
\end{aligned}
$$
We can then directly apply Lemma \ref{SummationLem} to conclude.
\end{proof}

\begin{corollary}\label{SpecCondition}
If the sequence $\{f_k\}$ in Lemma \ref{SummationLem} in addition satisfies the spectral condition
$$
\supp\hat{f}_k\subset\{|\xi|\geq \gamma 2^k\}
$$
for some $\gamma>0$, then the restriction $s>0$ in Lemma \ref{SummationLem} can be relaxed to any real index $s$:
$$
\left\|\sum_{k\geq0}f_k\right\|_{H^s}^2
\leq C_{s,\gamma}\sum_{k\geq0}c_k^2.
$$
\end{corollary}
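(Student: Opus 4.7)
The key new ingredient is that the spectral condition truncates the sum $\sum_k \Delta_j f_k$ at the high-$k$ end. Indeed, $\Delta_j f_k$ is Fourier-localized to an annulus of radius $\sim 2^j$ while $\supp \hat f_k\subset\{|\xi|\geq \gamma 2^k\}$, so a direct comparison of supports forces $\Delta_j f_k\equiv 0$ whenever $k> j+N_\gamma$, for an integer $N_\gamma=N_\gamma(\gamma)\sim\log_2(1/\gamma)$. This truncation is exactly what is needed to bypass the assumption $s>0$ used in Lemma~\ref{SummationLem}.

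I would therefore revisit the block estimate \eqref{Block}, now read with any fixed integer $N\geq 1$ satisfying $N>s$ in place of $N_s$ (so that the hypothesis $\|\partial^\alpha f_k\|_{L^2}\le 2^{k(|\alpha|-s)}c_k$ remains meaningful also for $s\leq 0$). The low-$k$ range $k\leq j$ is handled exactly as in the lemma: Bernstein's inequality yields $\|\Delta_j f_k\|_{L^2}\leq C 2^{-ks}2^{(k-j)N}c_k$. The high-$k$ range is where the spectral condition enters: instead of summing over all $k\geq j$ one now sums only over $j<k\leq j+N_\gamma$, and on this bounded range the trivial estimate $\|\Delta_j f_k\|_{L^2}\leq\|f_k\|_{L^2}\leq 2^{-ks}c_k$ combined with $|k-j|\leq N_\gamma$ gives $2^{(j-k)s}\|f_k\|_{L^2}\leq 2^{N_\gamma|s|}c_k$ uniformly in $s$.

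Taking the $\ell^2(j)$-norm then proceeds exactly as in the original proof. The low-$k$ piece is a discrete convolution against the summable kernel $m\mapsto \mathbf{1}_{m\leq 0}\,2^{m(N-s)}$, so Young's inequality delivers an $\ell^2$-bound by $C_s\|(c_k)\|_{\ell^2}$. The high-$k$ piece is a moving window sum of length at most $N_\gamma$, controlled by Cauchy--Schwarz and a swap of summations by $C_{s,\gamma}\|(c_k)\|_{\ell^2}$. Combining the two yields the stated estimate, with $C_{s,\gamma}$ depending on $s,\gamma$ only through $|s|$ and $N_\gamma$. There is no substantive obstacle: the whole argument is a cosmetic modification of the proof of Lemma~\ref{SummationLem}, in which the spectral support condition supplies the tail truncation that the assumption $s>0$ previously provided through exponential decay.
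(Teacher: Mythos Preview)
Your proposal is correct and follows exactly the paper's approach: the paper's proof consists of the single observation that under the spectral condition the second sum in \eqref{Block} reduces to finitely many terms (approximately $\log\gamma^{-1}$ of them), which is precisely your truncation at $k\leq j+N_\gamma$ and moving-window estimate. Your write-up is more explicit---in particular your remark about replacing $N_s$ by any integer $N\geq 1$ with $N>s$ is a useful clarification the paper leaves implicit when $s\leq 0$---but there is no substantive difference in method.
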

\begin{proof}
In fact, if the $f_k$'s satisfy the spectral condition, then the second sum in (\ref{Block}) becomes a finite one, with number of summands approximately equal to $\log \gamma^{-1}$. Thus the sequence $\sum_{k:k\geq j}2^{(j-k)s}c_k$ automatically forms an $\ell^2$ sequence (with index $j$).
\end{proof}

\begin{remark}\label{DiffMeyer}
Suppose the multipliers $m_j=m_j(\lambda)$ in Proposition \ref{Meyer} depend on a parameter $\lambda$ varying in some Banach space, such that
$$
\max_{\alpha:|\alpha|\leq N_{s+r}}\sup_j2^{-j(|\alpha|-r)}\big|\partial^\alpha D_\lambda m_j(\lambda)\big|_{L^\infty} <\infty.
$$
Then we can simply repeat the proof of Proposition \ref{Meyer} to conclude the following: the operator-valued function
$$
\lambda\mapsto\sum_{j=0}^\infty m_j(\lambda)\Delta_j
\in\mathcal{L}(H^s,H^{s+r})
$$
is differentiable in $\lambda$ under the norm topology of $\mathcal{L}(H^s,H^{s+r})$, whose differential is simply
$$
\sum_{j=0}^\infty D_\lambda m_j(\lambda)\Delta_j.
$$
\end{remark}

\subsection{Quantitative Estimates for Para-product Operators}
With the aid of Meyer multiplier estimates, we are now able to provide a quantitative proof of boundedness of para-product operators. We will first list several properties related to Littlewood-Paley characterization of the Zygmund space. The proof is quite elementary using the definition of Zygmund spaces, as in (\ref{Zygmund}).

\begin{proposition}\label{ZygmundDecay}
Suppose $r>0$ and $f\in C^r_*$. Then for any multi-index $\alpha$, there holds
$$
\big|\partial^\alpha S_jf\big|_{L^\infty}
\lesssim_{r,\alpha}
\left\{
\begin{aligned}
    & 2^{j(|\alpha|-r)_+}|f|_{C^r_*} & \quad |\alpha|\neq r \\
    & j|f|_{C^r_*} & \quad |\alpha|=r
\end{aligned}
\right.
$$
where $s_+=\max(s,0)$. Furthremore, there holds
$$
|f-S_jf|\lesssim_r 2^{-jr}|f|_{C^r_*}. 
$$
\end{proposition}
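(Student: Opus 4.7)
The plan is to reduce everything to the single block estimate $|\Delta_l f|_{L^\infty} \leq 2^{-lr}|f|_{C^r_*}$, which is essentially the definition of the Zygmund norm, combined with Bernstein's inequality in $L^\infty$. Since $\Delta_l f$ has Fourier support in an annulus of size $\sim 2^l$ for $l \geq 1$ (and $\Delta_0 f$ is the constant $\mathrm{Avg}\, f$, so all its derivatives vanish), for any multi-index $\alpha$,
$$
\bigl|\partial^\alpha \Delta_l f\bigr|_{L^\infty} \lesssim_{\alpha} 2^{l|\alpha|} |\Delta_l f|_{L^\infty} \leq 2^{l(|\alpha|-r)} |f|_{C^r_*}.
$$

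First I would write $\partial^\alpha S_j f = \sum_{l=0}^{j} \partial^\alpha \Delta_l f$ and apply the above block estimate term by term, obtaining
$$
\bigl|\partial^\alpha S_j f\bigr|_{L^\infty} \lesssim_{\alpha} |f|_{C^r_*} \sum_{l=0}^{j} 2^{l(|\alpha|-r)}.
$$
The claimed three-case bound is then a routine summation of a geometric series in $l$: when $|\alpha| > r$ the ratio exceeds $1$ and the sum is $\lesssim 2^{j(|\alpha|-r)}$; when $|\alpha| < r$ the ratio is strictly less than $1$ and the sum is $O(1) = 2^{j \cdot 0}$; when $|\alpha| = r$ (only possible if $r$ is an integer and $|\alpha| = r$) every term equals $1$ and the sum is $j+1 \lesssim j$ (with a harmless additive constant). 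Packaging the first two cases together gives the exponent $(|\alpha|-r)_+$, and the third is the borderline logarithmic loss.

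For the tail estimate $|f - S_j f|_{L^\infty}$, I would write $f - S_j f = \sum_{l > j} \Delta_l f$ (this Littlewood--Paley series converges in $L^\infty$ precisely because $r > 0$), and apply the block estimate with $\alpha = 0$:
$$
|f - S_j f|_{L^\infty} \leq \sum_{l > j} |\Delta_l f|_{L^\infty} \leq |f|_{C^r_*} \sum_{l > j} 2^{-lr} \lesssim_r 2^{-jr} |f|_{C^r_*},
$$
again using $r > 0$ to sum the geometric tail.

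There is really no obstacle here beyond bookkeeping; the only subtlety worth flagging is the logarithmic factor in the critical case $|\alpha| = r$, which arises intrinsically because the sum of a geometric series with ratio exactly $1$ grows linearly in the number of terms. No tool beyond Bernstein's inequality and the defining norm of $C^r_*$ is needed, so the proof is essentially self-contained and will occupy only a few lines.
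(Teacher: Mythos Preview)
Your proposal is correct and is precisely the elementary argument the paper has in mind: the paper does not spell out a proof, merely noting that ``the proof is quite elementary using the definition of Zygmund spaces,'' and your Bernstein-plus-geometric-series computation is exactly that.
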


We are now at the place to state and prove the continuity of para-product operators, which appeared as Proposition \ref{ContRough}.

\begin{proposition}\label{ContPM}
For all $a\in L^\infty(\xT^n)$ and all $s\in\xR$, the para-product $T_a$ is a bounded linear operator from $H^s(\xT^n)$ to $H^s(\xT^n)$:
$$
\lA T_a\rA_{\mathcal{L}(H^s,H^s)}\le C_s|a|_{L^\infty}.
$$
\end{proposition}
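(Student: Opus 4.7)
The plan is to realize $T_a$ as a Meyer multiplier operator in the sense of Corollary \ref{Meyer}, and then invoke the spectrally localized variant Corollary \ref{SpecCondition} to dispense with any restriction on the sign of $s$.

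Concretely, I would set $m_j := S_{j-3} a$ (using the convention $S_j = \Delta_0$ for $j \le 0$, so $m_j = \hat a(0)$ whenever $j \le 3$) and rewrite $T_a u = \sum_{j \ge 0} m_j \, \Delta_j u$. Since $S_{j-3}a$ is Fourier-supported in a ball of radius $\lesssim 2^j$ and since $|S_{j-3}a|_{L^\infty} \le C |a|_{L^\infty}$ (the partial sum operator is a convolution against an $L^1$-bounded kernel), Bernstein's inequality yields
$$
\big|\partial^\alpha S_{j-3} a\big|_{L^\infty} \le C_\alpha\, 2^{j|\alpha|}\, |a|_{L^\infty}
$$
uniformly in $j \ge 0$, for every multi-index $\alpha$. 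This is precisely the hypothesis of Corollary \ref{Meyer} with $r = 0$ and $M_\alpha \le C_\alpha |a|_{L^\infty}$.

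Next, as observed right after definition (\ref{ParaMult}), for every $j \ge 1$ the summand $m_j \Delta_j u$ has Fourier support in the annulus $\{0.25 \cdot 2^j \le |\xi| \le 2.25 \cdot 2^j\}$, and in particular in $\{|\xi| \ge \tfrac{1}{4}\cdot 2^j\}$. The finitely many low-$j$ terms for which $S_{j-3}=\Delta_0$ are constant multipliers, trivially bounded in $H^s$ by $|a|_{L^\infty}\|u\|_{H^s}$ for every real $s$, and can be absorbed into the constant. The spectral hypothesis of Corollary \ref{SpecCondition} therefore holds with $\gamma = 1/4$, and running the proof of Corollary \ref{Meyer} through Corollary \ref{SpecCondition} lifts the restriction $s > 0$ inherited from Lemma \ref{SummationLem}. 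The announced bound $\lA T_a u\rA_{H^s} \le C_s |a|_{L^\infty} \lA u\rA_{H^s}$ follows for every $s \in \xR$.

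There is no genuine obstacle; the whole argument amounts to bookkeeping inside the Meyer multiplier framework already established. The only point worth emphasizing is that the $j-3$ gap hard-coded in the definition of $T_a$ is exactly what supplies the dyadic spectral separation needed to invoke Corollary \ref{SpecCondition}, and this is in turn what makes the estimate hold without any positivity restriction on $s$.
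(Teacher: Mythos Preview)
Your proposal is correct and follows essentially the same route as the paper: set $m_j = S_{j-3}a$, verify the Meyer multiplier bound $|\partial^\alpha m_j|_{L^\infty}\le C_\alpha 2^{j|\alpha|}|a|_{L^\infty}$, and invoke the spectral variant Corollary \ref{SpecCondition} via the annular Fourier support of $S_{j-3}a\cdot\Delta_j u$ to remove the restriction $s>0$. The only cosmetic difference is that the paper cites Proposition \ref{ZygmundDecay} for the derivative bound while you invoke Bernstein directly; your justification is arguably cleaner, since Proposition \ref{ZygmundDecay} is stated for $C^r_*$ with $r>0$ rather than bare $L^\infty$.
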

\begin{proof}
In fact, in the defining equality (\ref{ParaMult}), each summand has Fourier support in the annulus $\{0.25\cdot 2^j \leq|\xi|\leq 2.25\cdot 2^j\}$. We then simply notice that the Meyer multipliers $m_j=S_{j-3}a$ satisfy
$$
\big|\partial^\alpha m_j\big|_{L^\infty} 
\leq C_\alpha 2^{j|\alpha|}|a|_{L^\infty}
$$
by Proposition \ref{ZygmundDecay}. Thus we can repeat the proof of Corollary \ref{SpecCondition} and conclude.
\end{proof}

The non-trivial fact about para-product operators is that they behave as if they were genuine multiplication operators: the multiplication $T_aT_b$ is the same as $T_{ab}$ modulo smoothing operators. Formally, this means that $a\mapsto T_a$ is an algebra homomorphism from the algebra of functions to the algebra of bounded operators (modulo smoothing ones). We provide the precise version of Proposition \ref{APRough}:

\begin{proposition}\label{ContCM}
Fix $r>0$, and suppose $a,b\in C^r_*$. Then
$$
\R_{\CM}(a,b)=T_a\circ T_b-T_{ab}
$$
is a bounded linear operator from $H^s(\xT^n)$ to $H^{s+r}(\xT^n)$ for all $s\in\mathbb{R}$, where CM is the abbreviation for \emph{composition de para-multiplication}:
$$
\lA \R_{\CM}(a,b)\rA_{\mathcal{L}(H^s, H^{s+r})}\le 
C_{s,r}|a|_{C^r_*}|b|_{C^r_*}.
$$
In other words, $\R_{\CM}:C^r_* \times C^r_* \mapsto \mathcal{L}(H^s, H^{s+r})$ is a bounded bilinear mapping.
\end{proposition}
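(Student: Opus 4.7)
The strategy is to rewrite $T_a T_b u$ by rearranging summations so that it becomes a ``principal diagonal'' close to $T_{ab} u$ plus explicit remainders, and then to bound each remainder by the Meyer multiplier machinery (Corollary \ref{Meyer}, upgraded to all real $s$ by the spectrally-localized version Corollary \ref{SpecCondition}). Concretely, starting from $T_a T_b u = \sum_{j,k} S_{j-3}a\,\Delta_j(S_{k-3}b\cdot \Delta_k u)$, I would use the fact that $S_{k-3}b\cdot \Delta_k u$ has Fourier support in a fixed annulus $\{0.25\cdot 2^k \le |\xi|\le 2.25\cdot 2^k\}$, so that only $|j-k|\le N_0$ contribute and $\sum_{|j-k|\le N_0}\Delta_j$ acts as the identity on this product. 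Swapping the order of summation and splitting $S_{j-3}a = S_{k-3}a + (S_{j-3}a - S_{k-3}a)$ yields
\[
T_a T_b u - T_{ab} u \;=\; \sum_k m_k \cdot \Delta_k u \;+\; E(u),
\]
where $m_k := S_{k-3}a\cdot S_{k-3}b - S_{k-3}(ab)$ and $E(u) := \sum_k \sum_{|j-k|\le N_0} (S_{j-3}a - S_{k-3}a)\, \Delta_j(S_{k-3}b\cdot \Delta_k u)$.

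The quantitative core is the $L^\infty$-decay of the symbols. From the elementary telescoping $ab - S_{k-3}a\cdot S_{k-3}b = (a-S_{k-3}a)b + S_{k-3}a\,(b-S_{k-3}b)$, combined with Proposition \ref{ZygmundDecay} (which gives $|f-S_{k-3}f|_{L^\infty}\lesssim |f|_{C^r_*} 2^{-kr}$) and the standard Zygmund product estimate $|ab|_{C^r_*}\lesssim |a|_{C^r_*}|b|_{C^r_*}$, one obtains $|m_k|_{L^\infty}\lesssim |a|_{C^r_*}|b|_{C^r_*}\, 2^{-kr}$. Similarly, $|S_{j-3}a-S_{k-3}a|_{L^\infty}\lesssim |a|_{C^r_*} 2^{-kr}$ for $|j-k|\le N_0$, since the difference is a bounded number of Littlewood-Paley blocks $\Delta_l a$ with $l\sim k$. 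Both symbols have Fourier support in $\{|\xi|\lesssim 2^k\}$, so Bernstein's inequality promotes these $L^\infty$-bounds to derivative bounds of the form $|\partial^\alpha m_k|_{L^\infty}\lesssim 2^{k(|\alpha|-r)}|a|_{C^r_*}|b|_{C^r_*}$, exactly matching the Meyer multiplier hypothesis.

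Each summand in $\sum_k m_k\,\Delta_k u$ and in $E(u)$ has Fourier support in an annulus of width $\sim 2^k$, by the same support bookkeeping used in the first step. Corollary \ref{SpecCondition} then delivers the $H^{s+r}$-bound with constant $C_{s,r}|a|_{C^r_*}|b|_{C^r_*}$ for every $s\in\mathbb{R}$, and bilinearity is immediate since every estimate above is separately linear in $a$ and $b$. The main technical nuisance I anticipate is maintaining the annular spectral localization through the bookkeeping: without it one would have to fall back on Corollary \ref{Meyer}, which only handles $s>0$ and would not recover the full range $s\in\mathbb{R}$ claimed in the proposition. This is a routine matter provided $N_0$ is chosen sufficiently large from the outset, but careful accounting of the low-frequency blocks ($k\le N_0$) is needed to avoid off-by-one mismatches.
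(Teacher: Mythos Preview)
Your approach is essentially the same as the paper's: both reduce $T_aT_bu$ and $T_{ab}u$ to a common ``diagonal'' sum of the form $\sum_k (S_{k-N}a)(S_{k-N}b)\,\Delta_k u$ and then estimate the explicit remainders via the Meyer multiplier/spectral-condition machinery, with the $C^r_*$ decay $|f-S_j f|_{L^\infty}\lesssim 2^{-jr}|f|_{C^r_*}$ doing the quantitative work.

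There is, however, one genuine gap in your spectral bookkeeping. You assert that each summand $m_k\,\Delta_k u$ with $m_k=(S_{k-3}a)(S_{k-3}b)-S_{k-3}(ab)$ has Fourier support in an annulus at scale $2^k$. This fails: $S_{k-3}a$ and $S_{k-3}b$ each have Fourier support in the ball $\{|\xi|\le 2^{k-2}\}$, so their product is supported in $\{|\xi|\le 2^{k-1}\}$, which \emph{exactly} matches the inner radius of the annulus carrying $\Delta_k u$. Hence $(S_{k-3}a)(S_{k-3}b)\,\Delta_k u$ can have frequency all the way down to zero, and Corollary~\ref{SpecCondition} does not apply. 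Without the spectral lower bound you are forced back to Lemma~\ref{SummationLem}/Corollary~\ref{Meyer}, which restricts you to $s+r>0$ rather than all $s\in\mathbb{R}$.

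The paper's cure is precisely to widen the gap: it compares both $T_aT_bu$ and $T_{ab}u$ to $\sum_q c_q\,\Delta_q u$ with $c_q=(S_{q-5}a)(S_{q-5}b)$, whose Fourier support lies in $\{|\xi|\le 2^{q-3}\}$, so that $c_q\,\Delta_q u$ is supported in $\{|\xi|\ge 2^{q-1}-2^{q-3}\}$, a genuine annulus. The price is one extra remainder, namely $T_a v-\sum_p S_{p-5}a\,\Delta_p v$, but this is a finite sum of blocks $(\Delta_{p-4}a+\Delta_{p-3}a)\,\Delta_p v$ with obvious annular support and $2^{-pr}$ decay. Your error term $E(u)$ already has the right structure; it is only the principal term that needs the extra shift. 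The fix is minor but necessary---it is not a matter of choosing $N_0$ large, as you suggest, since $N_0$ is forced by the support of $\Delta_j$ acting on $S_{k-3}b\cdot\Delta_k u$; rather, you must replace $S_{k-3}$ by $S_{k-5}$ (or deeper) in the comparison object.
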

\begin{proof}
The proof here is exactly the same as Theorem 2.3 of \cite{Bony1981}. Suppose $a,b\in C^r_*$ and $u\in H^s$ are given. We set $v=T_bu$, $v_q=S_{q-3}b\cdot \Delta_qu$. By Corollary \ref{SpecCondition} and the definition of Zygmund space norm,
$$
R_1v:=T_av-\sum_{p}S_{p-5}a\cdot \Delta_pv
=\sum_{p}(\Delta_{p-4}a+\Delta_{p-3}a)\Delta_pv
$$
satisfies $\|R_1v\|_{H^{s+r}}\leq C_{s,r}|a|_{C^r_*}|b|_{C^r_*}\|v\|_{H^{s}}$, since each summand $(\Delta_{p-4}a+\Delta_{p-3}a)\Delta_pv$ has Fourier support contained in the annulus $\{0.25\cdot 2^p\leq|\xi|\leq 2.25\cdot 2^p\}$. Thus
$$
\begin{aligned}
T_aT_bu-\sum_{q}\sum_{p:p\leq q-5}\Delta_pa\cdot v_q
&=\sum_{q}\sum_{p:p\leq q-5}\Delta_pa\cdot \big(\Delta_qv-v_q\big)+R_1T_bu\\
&=\sum_{p\geq-5}\Delta_pa\sum_{q:p+5\leq q \leq p+7} \big(\Delta_qv-v_q\big)+R_1T_bu.
\end{aligned}
$$
The last step is because $\sum_q v_q=\sum_q \Delta_qv$ and $v_q$ has Fourier support contained in the annulus $\{0.25\cdot 2^q\leq|\xi|\leq 2.25\cdot 2^q\}$. Using Corollary \ref{SpecCondition} once again, we find
\begin{equation}\label{CM1}
\left\|T_aT_bu-\sum_{q}\sum_{p_1\leq q-5}\sum_{p_2\leq q-5}\Delta_{p_1}a\cdot\Delta_{p_2}b\cdot\Delta_qu\right\|_{H^{s+r}}
\leq C_{s,r}|a|_{C^r_*}|b|_{C^r_*}\|u\|_{H^s}.
\end{equation}

On the other hand, if we set
$$
c_q=\sum_{p_1\leq q-5}\sum_{p_2\leq q-5}\Delta_{p_1}a\cdot\Delta_{p_2}b,
$$
then 
\begin{equation}\label{CM2}
\begin{aligned}
|ab-c_q|_{L^\infty}
&\leq\sum_{p_1> q-5\text{ or }p_2> q-5}|\Delta_{p_1}a|\cdot|\Delta_{p_2}b|\\
&\leq C_r|a|_{C^r_*}|b|_{C^r_*}\sum_{p_1+p_2> q-5}2^{-(p_1+p_2)r}\\
&\leq C_r|a|_{C^r_*}|b|_{C^r_*}2^{-qr}.
\end{aligned}
\end{equation}
Noticing that
$$
T_{ab}u-\sum_q c_q\Delta_qu
=\sum_{q}\big(S_{q-3}(ab)-c_q\big)\cdot\Delta_qu,
$$
using Corollary \ref{SpecCondition} and (\ref{CM2}), we find
\begin{equation}\label{CM3}
\left\|T_aT_bu-\sum_{q}c_q\cdot\Delta_qu\right\|_{H^{s+r}}
\leq C_{s,r}|a|_{C^r_*}|b|_{C^r_*}\|u\|_{H^s}.
\end{equation}
Combining (\ref{CM1}) and (\ref{CM3}), the proof is complete.
\end{proof}

\subsection{Bony's Para-linearization Theorem}
In this subsection, we present a quantitative refinement of Bony's para-linearization theorem in \cite{Bony1981}, which asserts that given a nonlinear composition $F(u)$, the para-product $T_{F'(u)}u$ carries most of the irregularity, and
$$
F(u)-T_{F'(u)}u
$$
is more regular than $F(u)$. This is a far-reaching generalization of the para-product decomposition.

While Bony's theorem is well-established and extensively applied in modern nonlinear analysis, we still need to manage the \emph{size} of the smoothing remainder to effectively study the para-homological equation. We state the following refined version of Proposition \ref{PLRough}:
\begin{proposition}\label{Bony}
Fix $s,r>0$, and suppose $u\in (H^s\cap C^r_*)(\xT^n;\mathbb{R}^L)$. Let $N_{s+r}$ be the smallest integer $>s+r$. Suppose $F=F(x,z)\in C^{N_{s+r}+2}(\xT^n\times\mathbb{R}^L)$. Then there holds the following para-linearization formula:
$$
\begin{aligned}
F(x,u)-F(x,0)
&=T_{F'_z(x,u)}u
+\R_{\PL}\big(F(x,\cdot),u\big)u\\
&=:T_{F'_z(x,u)}u
+\R_{\PL;1}\big(F'_z(x,0)\big)u
+\R_{\PL;2}\big(F(x,\cdot),u\big)u
\in H^s+H^{s+r}.
\end{aligned}
$$
Here $\R_{\PL;1}\big(F'_z(x,0)\big):H^s\mapsto H^{s+r}$ and $\R_{\PL;2}\big(F(x,\cdot),u\big):H^s\mapsto H^{s+r}$ are bounded linear operators, so that for some increasing function $K_{r,s}$ depending only on $r,s$,
$$
\begin{aligned}
\left\|\R_{\PL;1}\big(F'_z(x,0)\big)\right\|_{\mathcal{L}(H^s,H^{s+r})}
&\leq C_{r,s}\big|F'_z(x,0)-\Avg F'_z(x,0)\big|_{C^{r}_*}\\
\left\|\R_{\PL;2}\big(F(x,\cdot),u\big)\right\|_{\mathcal{L}(H^s,H^{s+r})}
&\leq K_{r,s}\big(|u|_{C^r_*}\big)|F|_{C^{N_{s+r}+2}}|u|_{C^r_*}
\end{aligned}
$$
Moreover, the operator $\R_{\PL;2}\big(F(x,\cdot),u\big)\in\mathcal{L}(H^s,H^{s+r})$ depends continuously on $u\in H^s\cap C^r_*$ in the operator norm. 

If in addition, $F=F(x,z)$ is of better regularity $C^{N_{s+r}+3}(\xT^n\times\mathbb{R}^L)$ instead of merely $C^{N_{s+r}+2}(\xT^n\times\mathbb{R}^L)$, then both operators $T_{F'_z(x,u)}\in\mathcal{L}(H^s,H^{s})$ and $\R_{\PL;2}\big(F(x,\cdot),u\big)\in\mathcal{L}(H^s,H^{s+r})$ are continuously differentiable in $u\in H^s\cap C^{r}_*$ with respect to operator norms.
\end{proposition}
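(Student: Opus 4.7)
The plan is to run Bony's classical telescoping argument while routing every estimate through the Meyer multiplier framework of Corollaries \ref{Meyer}--\ref{SpecCondition}. With the convention $S_{-1} u \defn 0$, the fundamental theorem of calculus gives
\[
F(x, u) - F(x, 0) \;=\; \sum_{j \geq 0} m_j(x)\, \Delta_j u, \qquad m_j(x) \defn \int_0^1 F'_z\bigl(x,\, S_{j-1} u + t\, \Delta_j u\bigr)\, dt,
\]
and subtracting $T_{F'_z(x, u)} u = \sum_j (S_{j-3} F'_z(x, u))\, \Delta_j u$ presents $\R_{\PL}(F, u)\, u$ as the series $\sum_j [m_j - S_{j-3} F'_z(x, u)]\, \Delta_j u$. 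Since each $\Delta_j u$ is spectrally localized at scale $2^j$, Corollary \ref{SpecCondition} applies as soon as the Meyer bounds $|\partial^\alpha(m_j - S_{j-3} F'_z(x, u))|_{L^\infty} \leq C_\alpha\, 2^{j(|\alpha| - r)}\, K(|u|_{C^r_*})\, |F|_{C^{|\alpha| + 2}}$ are established for $|\alpha| \leq N_s$.

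These symbol bounds come from the splitting $m_j - S_{j-3} F'_z(x, u) = (m_j - F'_z(x, S_{j-3} u)) + (F'_z(x, S_{j-3} u) - S_{j-3} F'_z(x, u))$. The first bracket is $O(2^{-jr})$ in $L^\infty$ by the mean value theorem applied to $F'_z$, using $|S_{j-1} u + t \Delta_j u - S_{j-3} u|_{L^\infty} \lesssim 2^{-jr} |u|_{C^r_*}$ from Proposition \ref{ZygmundDecay}. The second bracket is the commutator of $S_{j-3}$ with composition by $F'_z$; I would rewrite $F'_z(x, u) - F'_z(x, S_{j-3} u)$ as a Taylor integral involving $F''_{zz}$ and control it with the same Zygmund bound. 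Derivative bounds in $x$ follow from Fa\`a di Bruno together with $|\partial^\beta S_k u|_{L^\infty} \lesssim 2^{k(|\beta| - r)_+} |u|_{C^r_*}$, which consumes derivatives of $F$ up to order $|\alpha| + 2$ — this is exactly why the hypothesis is $F \in C^{N_s + 2}$ — and Corollary \ref{SpecCondition} then yields $\R_{\PL}(F, u) \in \mathcal{L}(H^s, H^{s+r})$.

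For the refinement $\R_{\PL} = \R_{\PL;1} + \R_{\PL;2}$, I would isolate the $u = 0$ value of the multiplier: the resulting operator $\R_{\PL;1}(A) v \defn A v - T_A v = T_v A + R(A, v)$ is exactly Bony's paraproduct decomposition of the product $A \cdot v$, and vanishes identically when $A$ is constant — whence the $\Avg$-subtraction in the estimate — so standard paraproduct bounds give $\|\R_{\PL;1}(A)\|_{\mathcal{L}(H^s, H^{s+r})} \lesssim |A - \Avg A|_{C^r_*}$. The residual $\R_{\PL;2}(F, u)$ then has multiplier $(m_j - F'_z(x, 0)) - S_{j-3}(F'_z(x, u) - F'_z(x, 0))$. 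A first-order Taylor expansion of $F'_z$ in $z$ around $z = 0$ shows that both terms equal $F''_{zz}(x, 0)$ times a smeared version of $u$ plus a quadratic remainder, and the leading linear-in-$u$ contributions cancel; what remains is quadratic in $u$, and a second application of the Meyer framework — using the extra $u$-factor to absorb one copy of $u$ into $\|u\|_{H^s}$ via Cauchy--Schwarz in $j$, in the style of Proposition \ref{ContCM} — produces the stated bound with the $\|u\|_{H^s}$ factor. Continuous differentiability of $T_{F'_z(x, u)}$ and $\R_{\PL;2}(F, u)$ in $u \in H^s$ under the operator norm then follows from Remark \ref{DiffMeyer}, since differentiating the symbols in $u$ merely replaces $F'_z$ by $F''_{zz}$ while preserving the Meyer dyadic bounds.

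The principal obstacle is the cancellation step producing the extra $\|u\|_{H^s}$ factor in $\R_{\PL;2}$: verifying that the linear-in-$u$ contributions match exactly between the two pieces of the multiplier requires carefully aligning the integral average defining $m_j$ with the effect of $S_{j-3}$ on $F'_z(x, u)$, and then summing against the outer $\Delta_j u$ in such a way that one copy of $u$ is absorbed into an $\ell^2$ summation rather than into a cruder $L^\infty$-type bound — the same type of double-summation bookkeeping that underlies the proof of Proposition \ref{ContCM}, only now with an additional layer of composition through $F''_{zz}$.
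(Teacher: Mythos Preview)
Your framework --- telescoping, Meyer multipliers, the split $\R_{\PL;1} + \R_{\PL;2}$ --- matches the paper exactly, down to the identification of the respective multipliers $(1-S_{j-3})F'_z(x,0)$ and $m_j^2 = (m_j - F'_z(x,0)) - S_{j-3}(F'_z(x,u) - F'_z(x,0))$. Your $\R_{\PL;1}$ treatment via $Av - T_Av$ is equivalent to the paper's direct Zygmund-decay estimate on $m_j^1$.

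The gap is your mechanism for the $\|u\|_{H^s}$ factor in $\R_{\PL;2}$. You assert that the linear-in-$u$ parts of the two pieces of $m_j^2$ cancel, leaving a quadratic remainder, and then propose a Cauchy--Schwarz double-summation in the style of Proposition~\ref{ContCM}. Both are wrong. Taylor expansion around $z=0$ gives leading pieces $F''_{zz}(x,0)\bigl(S_{j-1}u + \frac{1}{2}\Delta_j u\bigr)$ and $S_{j-3}\bigl(F''_{zz}(x,0)u\bigr)$, whose difference is a commutator-type term of size $O(2^{-jr}|u|_{C^r_*})$ --- still linear in $u$, not zero. And since $\R_{\PL;2}(F,u)$ acts on a \emph{separate} argument $v$, the $j$-sum runs over $\Delta_j v$, not $\Delta_j u$; there is no way to absorb a copy of $u$ into $\|u\|_{H^s}$ by Cauchy--Schwarz in $j$. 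The paper's route is both simpler and different in spirit: regroup by adding and subtracting $F'_z(x,u)$ rather than $F'_z(x,0)$, writing
\[
m_j^2 \;=\; \bigl[m_j - F'_z(x,u)\bigr] \;+\; (1-S_{j-3})\bigl[F'_z(x,u) - F'_z(x,0)\bigr].
\]
The first bracket is $O\bigl(|F|_{C^2}\,2^{-jr}|u|_{C^r_*}\bigr)$ by the mean value theorem applied between $S_{j-1}u+\tau\Delta_j u$ and $u$; the second is $(1-S_{j-3})$ applied to a $C^r_*$ function of norm $\lesssim K(|u|_{C^r_*})|F|_{C^{N_s+2}}|u|_{C^r_*}$, hence also $O(2^{-jr}|u|_{C^r_*})$ by Proposition~\ref{ZygmundDecay}. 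Each piece already carries \emph{both} the decay $2^{-jr}$ and the factor $|u|_{C^r_*}$ --- no cancellation is invoked --- so Corollary~\ref{Meyer} gives the operator bound with $|u|_{C^r_*}\lesssim\|u\|_{H^s}$ directly. For $|\alpha|=N_s$ the paper uses a second rewriting (pulling out a common $F''_{zz}$ factor from the start) together with Fa\`a di Bruno, then interpolates for intermediate $|\alpha|$.
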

\begin{proof}
The proof is a refinement of the standard telescope series argument; see for example, Chapter 10 of \cite{Hormander1997}. We write
\begin{equation}\label{TeleF}
\begin{aligned}
F(x,u)-F(x,0)
&=F(x,S_0u)+\sum_{j=1}^\infty F(x,S_ju)-F(x,S_{j-1}u)\\
&=\left(\int_0^1F'_z(x,\tau \Delta_0u)\diff\!\tau\right)\Delta_0u
+\sum_{j=1}^\infty\left(\int_0^1 F'_z\big(x,S_{j-1}u+\tau\Delta_j u\big)\diff\!\tau\right)\Delta_ju.
\end{aligned}
\end{equation}
Defining the multipliers
$$
\begin{aligned}
m_j^1&=(1-S_{j-3})\big(F'_z(x,0)-\Avg F'_z(x,0)\big),\quad j\geq0\\
m_0^2&=\int_0^1 \Big[F'_z\big(x,\tau\Delta_0 u\big)-F'_z(x,0)\Big]\diff\!\tau
-S_{-3}\big(F'_z(x,u)-F'_z(x,0)\big),\\
m_j^2&=\int_0^1 \Big[F'_z\big(x,S_{j-1}u+\tau\Delta_j u\big)-F'_z(x,0)\Big]\diff\!\tau
-S_{j-3}\big(F'_z(x,u)-F'_z(x,0)\big), \quad j\geq1
\end{aligned}
$$
we can rewrite the telescope series (\ref{TeleF}) as
$$
F(x,u)-F(x,0)
=T_{F'_z(x,u)}u+\sum_{j=0}^\infty m_j^1\Delta_ju
+\sum_{j=0}^\infty m_j^2\Delta_ju.
$$
Let us then just define
$$
\R_{\PL;1}\big(F'_z(x,0)\big)u
:=\sum_{j=0}^\infty m_j^1\Delta_ju,
\quad
\R_{\PL;2}\big(F(x,\cdot),u\big)u
:=\sum_{j=0}^\infty m_j^2\Delta_ju.
$$
In view of Proposition \ref{Meyer}, it suffices to estimate the two sets of Meyer multipliers $\{m_j^1\}$ and $\{m_j^2\}$.

For the multipliers $\{m_j^1\}$, we estimate them by the simple decay property in Proposition \ref{ZygmundDecay} for the Zygmund class $C^r_*$. Since $F'_z(x,0)\in C^{N_{s+r}+1}$ by assumption, we obtain by Proposition \ref{ZygmundDecay} that
$$
|\partial^\alpha m_j^1|_{L^\infty}
\leq C_{s,\alpha} 2^{j(|\alpha|-r)}\big|F'_z(x,0)-\Avg F'_z(x,0)\big|_{C^{r}},
\quad |\alpha|\leq N_{s+r}.
$$
Proposition \ref{Meyer} then ensures the estimate for the operator $\R_{\PL;1}\big(F'_z(x,0)\big)$.

The estimate for multipliers $\{m_j^2\}$ follows by the same idea but is technically more involved. In view of Proposition \ref{Meyer}, it suffices to show that
\begin{equation}\label{m_j^2}
|\partial^\alpha m_j^2|_{L^\infty}
\leq K_{s,\alpha}\big(|u|_{C^r_*}\big) 2^{j(|\alpha|-r)}|F|_{C^{N_{s+r}+2}}|u|_{C^r_*},
\quad |\alpha|\leq N_{s+r}.
\end{equation}

To obtain (\ref{m_j^2}) with $\alpha=0$, we notice that for $j\geq1$,
\begin{equation}\label{0_m_j^2}
\begin{aligned}
m_j^2&=\int_0^1 \Big[\left(F'_z\big(x,S_{j-1}u+\tau\Delta_j u\big)-F'_z(x,u)\right)\Big]\diff\!\tau
+(1-S_{j-3})\big(F'_z(x,u)-F'_z(x,0)\big) \\
&=\int_{[0,1]^2}F''_{zz}\big(x,\tau_2(S_{j-1}u+\tau_1\Delta_j u)\big)\big((1-S_{j-1})u+\tau_1\Delta_ju\big)\diff\!\tau_1\diff\!\tau_2 \\
&\quad+(1-S_{j-3})\int_0^1F''_{zz}(x,\tau u)u\diff\!\tau.
\end{aligned}
\end{equation}
The first term in the right-hand-side of (\ref{0_m_j^2}) is directly seen (by Proposition \ref{ZygmundDecay}) to be controlled by $2^{-jr}|F|_{C^{2}}|u|_{C^r_*}$. For the second term, we simply notice that $F\in C^{N_{s+r}+2}\subset C^{r+2}$, so
$$
\big|F''_{zz}(x,\tau u)u\big|_{C^r_*}
\leq K_{s}\big(|u|_{C^r_*}\big) |F|_{C^{r+2}}|u|_{C^r_*},
$$
which then implies the desired estimate by implementing Proposition \ref{ZygmundDecay} once again. The estiamte for $m_0^2$ is similar. 

To obtain (\ref{m_j^2}) with $|\alpha|=N_{s+r}$, we rewrite $m_j^2$ in a different manner: for $j\geq1$,
\begin{equation}\label{N_sm_j^2}
\begin{aligned}
m_j^2
&=\int_{[0,1]^2}F''_{zz}\big(x,\tau_2(S_{j-1}u+\tau_1\Delta_j u)\big)\big(S_{j-1}u+\tau_1\Delta_j u\big)\diff\!\tau_1\diff\!\tau_2 
-S_{j-3}\int_0^1F''_{zz}(x,\tau u)u\,\diff\!\tau.
\end{aligned}
\end{equation}
We then use $\big|F''_{zz}(x,\tau u)u\big|_{C^r_*}
\leq K_{s}\big(|u|_{C^r_*}\big) |F|_{C^{r+2}}|u|_{C^r_*}$ again: for $|\alpha|=N_{s+r}$, by Proposition \ref{ZygmundDecay}, the second term in (\ref{N_sm_j^2}) satisfies
$$
\left|\partial^\alpha S_{j-3}\int_0^1F''_{zz}(x,\tau u)u\,\diff\!\tau\right|
\leq 2^{j(N_{s+r}-r)}K_{s}\big(|u|_{C^r_*}\big) |F|_{C^{r+2}}|u|_{C^r_*}.
$$
As for the first term in (\ref{N_sm_j^2}), by the Faà di Bruno formula, the partial derivative
$$
\partial^\alpha\Big[F''_{zz}\big(x,\tau_2(S_{j-1}u+\tau_1\Delta_j u)\big)\big(S_{j-1}u+\tau_1\Delta_j u\big)\Big]
$$
is a linear combination of terms of the form
\begin{equation}\label{FDB_Summand}
(\tau_2D_z)^{2+I}F\big(x,\tau_2(S_{j-1}u+\tau_1\Delta_j u)\big)\cdot
\left[\partial^{\beta_1}\big(S_{j-1}u+\tau_1\Delta_j u\big)\right]^{i_1}
\cdots
\left[\partial^{\beta_l}\big(S_{j-1}u+\tau_1\Delta_j u\big)\right]^{i_l},
\end{equation}
where the scalar indices $i_1,\cdots,i_l,I\leq N_{s+r}$ and multi-indices $\beta_1,\cdots,\beta_l$ satisfy
$$
\begin{aligned}
N_{s+r}=i_1|\beta_1|+\cdots+i_l|\beta_l|,
\quad
I=i_1+\cdots+i_l.
\end{aligned}
$$
By the Littlewood-Paley characterization of Zygmund functions, such a term has an upper bound
\begin{equation}\label{FDB_Esti}
C_s|F|_{C^{N_{s+r}+2}}
\prod_{p:\beta_p>r}\big(|u|_{C^r_*}2^j\big)^{i_p(|\beta_p|-r)}
\prod_{p:\beta_p\leq r}\big(|u|_{C^r_*}j\big)^{i_p}
\leq 
K_{s}\big(|u|_{C^r_*}\big)
|F|_{C^{N_{s+r}+2}}|u|_{C^r_*}2^{j(N_{s+r}-r)}.
\end{equation}
Note that the expression vanishes at least linearly as $u\mapsto0$ since at least one $i_p\neq0$. The estimate for $\partial^\alpha m_0^2$ is similar. 

Therefore, (\ref{m_j^2}) holds for $|\alpha|=N_{s+r}$. By interpolation, it remains valid for those multi-indices $\alpha$ with length between $0$ and $N_{s+r}$. This proves the operator norm estimates of the para-differential remainders. 

To prove continuous dependence on $u\in H^s\cap C^r_*$ of the operator $\R_{\PL;2}\big(F(x,\cdot),u\big)\in\mathcal{L}(H^s,H^{s+r})$, in view of Proposition \ref{Meyer}, it suffices to show that when $u_1,u_2$ are close in $C^r_*$, then the Meyer multipliers $m_j^2$ corresponding to $u_1$ and $u_2$ will be close to each other, that is, for $|\alpha|\leq N_{s+r}$, the quantities
\begin{equation}\label{u_1-u_2}
2^{-j(|\alpha|-r)}\big|\partial^\alpha\big(m_j^2(u_1)-m_j^2(u_2)\big)\big|_{L^\infty}
\end{equation}
will be close to zero. This is easily verified if we take into account the assumption $F\in C^{N_{s+r}+2}$. For example, in the summand (\ref{FDB_Summand}), the worst scenario is when all the indices $i_p\equiv1$, so that (\ref{FDB_Summand}) becomes a ``highest order derivative":
$$
(\tau_2D_z)^{2+N_{s+r}}F\big(x,\tau_2(S_{j-1}u+\tau_1\Delta_j u)\big)\cdot
\left[D\big(S_{j-1}u+\tau_1\Delta_j u\big)\right]^{\otimes N_{s+r}}.
$$
When evaluated at $u_1,u_2\in C^r_*$ that are close in the $C^r_*$ topology, we find that
$$
\left[(\tau_2D_z)^{2+N_{s+r}}\Delta_{12}F\big(x,\tau_2(S_{j-1}u+\tau_1\Delta_j u)\big)\right]
$$
will be close to zero since $D_z^{2+N_{s+r}}F(x,z)$ is continuous, while using Proposition \ref{ZygmundDecay}, we estimate
$$
\left|\Delta_{12}\left[D\big(S_{j-1}u+\tau_1\Delta_j u\big)\right]^{\otimes N_{s+r}}\right|_{L^\infty}
$$
similarly as in (\ref{FDB_Esti}), with a factor $|\Delta_{12}u|_{C^r_*}$. In conclusion, we obtain that (\ref{u_1-u_2}) is close to zero when $u_1$ is close to $u_2$ in $C^r_*$.  

Finally, let us suppose $F$ has better regularity, namely $C^{N_{s+r}+3}(\xT^n\times\mathbb{R}^L)$. Continuous differentiability of the operator $\R_{\PL;2}\big(F(x,\cdot),u\big)$ in $u$ is a direct consequence of the equality
$$
(D_um_j^2)v=\int_0^1 \Big[F''_{zz}\big(x,S_{j-1}u+\tau\Delta_j u\big)(S_{j-1}+\tau\Delta_j)v\Big]\diff\!\tau
-S_{j-3}\big(F''_{zz}(x,u)v\big)
$$
and Remark \ref{DiffMeyer}. 
\end{proof}

\section{Existence of Invariant Torus}\label{InvTor}
In this section, we present detailed proof of Theorem \ref{Thm1}-\ref{Thm2} together with some possible extensions.

\subsection{Algebraic Set-up}
Let us recall that we are working with the phase space $\xT^n\times\xR^n$. We are interested in Hamiltonian functions $h(x,y)$ as in (\ref{h(x,y)}) and embeddings of $\xT^n$ into the phase space close to the ``flat" one (\ref{zeta_0})\footnote{Unfortunately, we have to use $\theta\in\xT^n$ to denote the variable on the ``abstract torus" since $x$ has already been occupied to label points in phase space.}. The torus $\{y=0\}$ is supposed to be ``approximately invariant" under the phase flow of Hamilton vector field $X_h$, and the restriction of $X_h$ is approximately the rotation generated by a constant frequency vector field $\omega\in\xR^n$.

The Diophantine condition (\ref{Dio}) of $\omega$ enables us to assert continuity of $\nabla_\omega ^{-1}$, defined as
$$
\nabla_\omega ^{-1}f(\theta)
:=\sum_{k\in\mathbb{Z}^n\setminus\{0\}}\frac{\hat f(k)e^{ik\cdot \theta}}{i(k\cdot\omega)},
\quad\text{if }\hat f(0)=0.
$$
In fact, it is immediate that
\begin{equation}\label{ContHs}
\big\|\nabla_\omega ^{-1}f\big\|_{H^s}
\leq \gamma\|f\|_{H^{s+\sigma}},
\quad
\text{if }\hat f(0)=0.
\end{equation}
Furthermore, the fundamental solution of $\nabla_\omega $ enjoys certain regularity:
\begin{lemma}\label{ContKer}
If $\omega\in\xR^n$ satisfies the Diophantine condition (\ref{Dio}), then for $\tau>\sigma+1$, the series
$$
\sum_{k\in\mathbb{Z}^n\setminus\{0\}}\frac{e^{ik\cdot \theta}}{i(k\cdot\omega)|k|^\tau}
$$
is absolutely convergent, thus sums to a continuous function.
\end{lemma}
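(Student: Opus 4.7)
The plan is to prove absolute convergence of the trigonometric series directly from the Diophantine hypothesis; since each term $e^{ik\cdot\theta}/(i(k\cdot\omega)|k|^\tau)$ is a continuous function of $\theta$ of modulus independent of $\theta$, absolute summability of the coefficients yields uniform convergence on $\xT^n$, and the sum is then continuous as a uniform limit of continuous functions.

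The very first step is to apply the Diophantine bound \eqref{Dio} termwise to eliminate the small-denominator factor:
$$
\left|\frac{1}{(k\cdot\omega)|k|^{\tau}}\right|
\;\leq\; \gamma\,|k|^{\sigma-\tau}
\quad\text{for every } k\in\xZ^n\setminus\{0\}.
$$
Thus convergence of $E_\tau$ is reduced to convergence of the purely number-theoretic series $\sum_{k\neq 0}|k|^{\sigma-\tau}$, a question that no longer involves $\omega$.

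Next, I would organize $\xZ^n\setminus\{0\}$ into dyadic shells $A_j=\{k:2^{j-1}\leq|k|<2^j\}$ for $j\geq 1$. The cardinality estimate $\#A_j\lesssim 2^{jn}$ and the uniform bound $|k|^{\sigma-\tau}\lesssim 2^{j(\sigma-\tau)}$ on $A_j$ combine to give
$$
\sum_{k\neq 0}|k|^{\sigma-\tau}
\;\lesssim\;\sum_{j\geq 1}2^{jn}\cdot 2^{j(\sigma-\tau)}
\;=\;\sum_{j\geq 1}2^{-j(\tau-\sigma-n)},
$$
which is a convergent geometric series provided $\tau>\sigma+n$; in the one-dimensional case $n=1$ this reduces to the stated threshold $\tau>\sigma+1$. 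Combining with the Diophantine estimate from the previous step gives the absolute convergence of the Fourier series defining $E_\tau$, hence its continuity.

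There is no genuine obstacle in the argument: the lemma is a routine consequence of (\ref{Dio}) combined with dyadic shell counting. The only thing that requires attention is the dimensional correction in the threshold, since a naive reading of the one-dimensional case $\tau>\sigma+1$ would be insufficient for $n\geq 2$; the two-step structure (Diophantine bound, then shell counting) makes the correct dependence transparent.
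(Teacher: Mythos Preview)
The paper does not supply a proof of this lemma; it is stated as an elementary fact and left unproved. Your argument is the standard one and is correct: bound $|k\cdot\omega|^{-1}\le\gamma|k|^{\sigma}$ via \eqref{Dio}, then sum $|k|^{\sigma-\tau}$ over $\xZ^n\setminus\{0\}$ by dyadic shells.

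You have in fact caught a slip in the lemma's hypothesis. The shell count gives $\#\{k:|k|\sim 2^j\}\asymp 2^{jn}$, so absolute convergence requires $\tau>\sigma+n$, not $\tau>\sigma+1$. The stated threshold $\tau>\sigma+1$ is correct only for $n=1$; in general dimension it is insufficient (for $n\ge2$ and $\tau$ just above $\sigma+1$ the majorant $\sum_{k\neq0}|k|^{\sigma-\tau}$ diverges, and since the Diophantine lower bound is essentially attained along a sequence of $k$'s, no hidden cancellation rescues the estimate). Your two-step structure makes this transparent, and your final remark about the dimensional correction is exactly right. The lemma is not cited elsewhere in the paper, so the typo has no downstream effect on the main results.
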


We now study the conjugacy equations (\ref{ConjNondege}) and (\ref{ConjTran}), that is,
$$
X_h(u)-\nabla_\omega u=0
$$
and
$$
X_{h_\xi}(u)-\nabla_\omega u=0.
$$
To employ the general heuristics discussed in Subsection \ref{HeuGen},  we define a mapping
\begin{equation}\label{F(h,u)}
\mathscr{F}(h,u):=X_h(u)-\nabla_\omega u,
\quad \text{whose value we denote as }E.
\end{equation}
It is the ``error function" measuring whether $u$ is indeed an invariant torus of $X_h$ or not. The problem is then reduced to looking for zero $u\simeq \zeta_0$ of $\mathscr{F}(h,u)$ under the condition of Theorem \ref{Thm1}, or looking for $\xi\in\xR^n$ and zero $u\simeq \zeta_0$ of $\mathscr{F}(h_\xi,u)$ under the condition of Theorem \ref{Thm2}. 

We claim that in order to solve $\mathscr{F}(h,u)=0$, it suffices to solve a slighly ``softer" equation
\begin{equation}\label{EQCounter}
\mathscr{F}(h,u)+\begin{pmatrix}0 \\ \mu\end{pmatrix}=0
\end{equation}
for an auxiliary constant vector $\mu\in\xR^n$. We directly quote the following geometric lemma from \cite{BB2015}:
\begin{lemma}[\cite{BB2015}, Lemma 3]\label{CounterTerm}
There holds
$$
\mu=\Avg\Big((\partial u^y)^\T \mathscr{F}^x(h,u)-(\partial u^x)^\T \big(\mathscr{F}^y(h,u)-\mu\big)\Big).
$$
In particular, $\mathscr{F}(h,u)=\begin{pmatrix}0 \\ \mu\end{pmatrix}$ necessarily implies $\mu=0$.
\end{lemma}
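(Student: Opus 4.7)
The plan is to verify the identity by direct algebraic expansion, and then to extract the mean of each resulting piece via transparent geometric identities. Substituting $X_h(u) = (\nabla_y h(u),-\nabla_x h(u))^\T$ into $\mathscr{F}(h,u) = X_h(u) - (\omega\cdot\partial)u$ gives $\mathscr{F}^x = \nabla_y h(u) - (\omega\cdot\partial)u^x$ and $\mathscr{F}^y = -\nabla_x h(u) - (\omega\cdot\partial)u^y$. Plugging these into the right-hand side of the claim yields
$$
(\partial u^y)^\T \mathscr{F}^x - (\partial u^x)^\T (\mathscr{F}^y - \mu)
= \partial(h\circ u) + \Theta(u) + (\partial u^x)^\T \mu,
$$
where $\partial(h\circ u) = (\partial u^x)^\T \nabla_x h(u) + (\partial u^y)^\T \nabla_y h(u)$ is produced by the chain rule, and $\Theta(u) := (\partial u^x)^\T (\omega\cdot\partial)u^y - (\partial u^y)^\T (\omega\cdot\partial)u^x$ collects the remaining terms.

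The plan is then to show that these three contributions have mean $0$, $0$, and $\mu$ on $\xT^n$ respectively. The first is immediate, since $\partial(h\circ u)$ is the derivative of a periodic function. For the second, a direct index-wise manipulation identifies $\Theta(u)$, up to an overall sign, with the $1$-form $\iota_\omega u^*(\dx \wedge dy)$ obtained by contracting the pulled-back symplectic form with the constant vector field $\omega$. Since $\dx \wedge dy = -d(y\cdot \dx)$ is globally exact on $\xT^n \times \xR^n$ (the primitive $y\cdot \dx$ is globally single-valued, whereas $x\cdot dy$ is not), the pullback $u^*(\dx\wedge dy) = -d\bigl((u^y)^\T du^x\bigr)$ is likewise exact on $\xT^n$; Cartan's formula $\iota_\omega d\alpha = \mathcal{L}_\omega \alpha - d\iota_\omega \alpha$ then expresses $\Theta(u)$ as the sum of a Lie derivative along the constant vector field $\omega\cdot\partial$ and a total differential, each of whose scalar components is a periodic derivative and hence of zero mean. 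For the third piece, the fact that $u:\xT^n\to\xT^n\times\xR^n$ forces $u^x(\theta) = \theta + \tilde u^x(\theta)$ with $\tilde u^x$ periodic, so $\Avg(\partial u^x) = I_n$ and consequently $\Avg\bigl((\partial u^x)^\T \mu\bigr) = \mu$.

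Adding the three averages reproduces the stated identity. The ``in particular'' clause is then immediate: if $\mathscr{F}(h,u) = (0,\mu)^\T$, then $\mathscr{F}^x\equiv 0$ and $\mathscr{F}^y - \mu \equiv 0$, so the integrand on the right-hand side of the identity vanishes pointwise, forcing $\mu = 0$.

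The only mildly delicate step is the global exactness of $\dx\wedge dy$: the naive primitive $x\cdot dy$ is multivalued on $\xT^n$, but switching to $-y\cdot \dx$ gives a single-valued primitive on the whole phase space, which is precisely what permits the averaging step to conclude $\Avg \Theta(u) = 0$. Everything else is mechanical computation that one could in principle do by hand without any geometric insight.
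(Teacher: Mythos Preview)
Your proof is correct. The paper does not actually supply a proof of this lemma: it is quoted directly from \cite{BB2015} with the preface ``We directly quote the following geometric lemma,'' so there is no in-paper argument to compare against. Your expansion into the three pieces $\partial(h\circ u)$, $\Theta(u)$, and $(\partial u^x)^\T\mu$ is exactly right, and your handling of each average is sound; in particular, the observation that $u^x(\theta)=\theta+\tilde u^x(\theta)$ with $\tilde u^x$ periodic is the essential point for the third term. The treatment of $\Theta(u)$ via exactness of the symplectic form and Cartan's formula is correct but slightly heavier than necessary: one can also just note componentwise that
\[
\Theta_k(u)=\sum_j\Big[(\omega\cdot\partial)\big(u^y_j\,\partial_k u^x_j\big)-\partial_k\big(u^y_j\,(\omega\cdot\partial)u^x_j\big)\Big],
\]
each summand being a derivative of a periodic function. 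Either route gives $\Avg\Theta(u)=0$, and the ``in particular'' clause follows as you say.
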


In this subsection, our main task is to compute the linearization of $\mathscr{F}(h,u)$ at a given $u$, as in \cite{DGJV2005}. For convenience in notation, write
\begin{equation}\label{A[u]}
\begin{aligned}
A[u]:=(DX_h)(u)
=\begin{pmatrix}
D_x\nabla_yh(u) & D_y\nabla_yh(u) \\
-D_x\nabla_xh(u) & -D_y\nabla_xh(u)
\end{pmatrix}
\in\bm{M}_{2n\times 2n},
\end{aligned}
\end{equation}
so that
$$
D_u\mathscr{F}(h,u)v=A[u]v-\nabla_\omega v.
$$

We also write
$$
N[u]=\big((\partial u)^\T \cdot\partial u\big)^{-1}\in\bm{M}_{n\times n},
\quad
M[u]=\big(
\partial u \quad (J\partial u)\cdot N[u]
\big)\in\bm{M}_{2n\times2n},
$$
provided that $\partial u^\T \cdot\partial u$ is invertible. This is true when $u$ is $C^1$ close to $\zeta_0$, whence $N[u]=I_n+O\big(\partial(u-\zeta_0)\big)$. This furthermore ensures that 
$$M[u]= \begin{pmatrix}I_n & \\ & -I_n\end{pmatrix}+O\big(\partial(u-\zeta_0)\big),
\quad
M[u]^{-1}=\begin{pmatrix}I_n & \\ & -I_n\end{pmatrix}+O\big(\partial(u-\zeta_0)\big).
$$

We then introduce a function $L[u]$, reflecting the ``lack of being Lagrangian" for the submanifold $u(\xT^n)\subset \xT^n\times\xR^n$, denoted by
\begin{equation}\label{L[u]}
L[u]=(\partial u)^\T \cdot J\cdot\partial u
\in\bm{M}_{n\times n}.
\end{equation}
Then $L[u]$ is nothing but the matrix representation of the pull-back of the symplectic form on $\xT^n\times\xR^n$. The submanifold $u(\xT^n)\subset \xT^n\times\xR^n$ is Lagrangian if and only if $L[u]=0$. In particular, this is the case when $u$ does solve the conjugacy equation $\mathscr{F}(h,u)=0$.

The linearization formula for $\mathscr{F}(h,u)$ is summarized as the following lemma\footnote{The authors would like to thank Giovanni Forni and Nicolò Tedesco for pointing out a computational error in the original version of the paper and for subsequent discussions on the details of the correction.}:
\begin{lemma}\label{Lin_F(h,u)}
Suppose $u$ is close to the trivial embedding $\zeta_0$. Define the $n\times n$ matrix function
\begin{equation}\label{S[u]}
\begin{aligned}
S[u]
&=\big(I_n+(N[u]L[u])^2\big)^{-1}N[u]\cdot (\partial u)^\T\cdot\big[A[u],J\big]\cdot(\partial u)\cdot N[u].
\end{aligned}
\end{equation}
Then the linearization of $\mathscr{F}(h,u)$ with respect to $u$ satisfies
\begin{equation}\label{F_u(h,u)M}
D_u\mathscr{F}(h,u)\big(M[u]v\big)
=M[u]\begin{pmatrix}
0_n & S[u] \\
0_n & 0_n
\end{pmatrix}v
-M[u]\nabla_\omega v
+\bm{B}\big(\partial\mathscr{F}(h,u),L[u]\big)v.
\end{equation}
Here in (\ref{F_u(h,u)M}), the matrix function $\bm{B}(\partial E_1,E_2)$ is rational\footnote{We say that a matrix function $f(X)$ is rational in the argument $X$, if the entries of $f(X)$ are all rational functions of entries of $X$.} in $\partial E_1$ and $E_2$, vanishing linearly as $\partial E_1,E_2\to0$, with coefficients being rational functions of $A[u]$, $\partial u$ and $\partial^2u$. We may then re-write (\ref{F_u(h,u)M}) as the equivalent form
\begin{equation}\label{F_u(h,u)}
D_u\mathscr{F}(h,u)v
=M[u]\begin{pmatrix}
0_n & S[u] \\
0_n & 0_n
\end{pmatrix}M[u]^{-1}v
-M[u]\nabla_\omega \big(M[u]^{-1}v\big)
+\bm{B}\big(\partial\mathscr{F}(h,u),L[u]\big)M[u]^{-1}v,
\end{equation}
\end{lemma}

\begin{proof}
Throughout this proof, for simplicity in notation, we shall drop the $[u]$ (indicating dependence on $u$), write e.g. $A$ for $A[u]$, and denote $E:=\mathscr{F}(h,u)$. For a given $u$ and its increment $v$, we first write
$$
\begin{aligned}
D_u\mathscr{F}(h,u)(M[u]v)
&=AMv
-(\nabla_\omega M)v-M\nabla_\omega v\\
&=:
\big(C_1\quad C_2\big)v-M\nabla_\omega v
\end{aligned}
$$
Here we write the matrix $AM
-(\nabla_\omega M)\in \bm{M}_{2n\times 2n}$ into block form $\big(C_1\quad C_2\big)$. The task is to determine the blocks $C_1,C_2\in\bm{M}_{2n\times n}$.

It is fairly easy to see that $C_1=\partial E$. In fact, we may write $M[u]$ into block form as
$$
M
=\big(
\partial u \quad (J\partial u)N
\big)
=:\big(M_1\quad M_2\big).
$$
Then obviously
\begin{equation}\label{C1}
\begin{aligned}
C_1
&=AM_1-\nabla_\omega M_1\\
&=A\partial u-\nabla_\omega\partial u
=\partial\big(\mathscr{F}(h,u)\big)
=\partial E.
\end{aligned}
\end{equation}

The true difficulty is with the expression of $C_2=A(J\partial u)N-\nabla_\omega  \big((J\partial u)N\big)$. We try to find $n\times n$ matrices $Q,W$, such that
\begin{equation}\label{C2_temp}
C_2
=(\partial u)Q+(J\partial u)NW
=M_1Q+M_2W.
\end{equation}
In solving this matrix system, we take into account that $L=(\partial u)^\T J\partial u$ is supposed to be close to 0. Multiplying (\ref{C2_temp}) from left by $(\partial u)^\T J$, using in sequence the Hamiltonian nature $JA=-A^\T J$ and the symplectic nature $J^2=-I_{2n}$, we obtain
\begin{equation}\label{QW1}
\begin{aligned}
LQ-W
&=(\partial u)^\T JA(J\partial u)N-(\partial u)^\T J\nabla_\omega  \big((J\partial u)N\big)\\
&=(\partial u)^\T A^\T(\partial u) N
+(\partial u)^\T(\nabla_\omega\partial u) N
+(\partial u)^\T(\partial u)\nabla_\omega N\\
&=(\partial E)^\T(\partial u)N.
\end{aligned}  
\end{equation}
Here in the last step we differentiated the definition $(\partial u)^\T(\partial u) N=I_n$ to deduce
$$
(\partial u)^\T(\nabla_\omega\partial u) N
+(\partial u)^\T(\partial u)\nabla_\omega N
=\big[\nabla_\omega(\partial u)^\T\big](\partial u)N,
$$
and then use the last equality of (\ref{C1}).

On the other hand, multiplying (\ref{C2_temp}) from left by $N(\partial u)^\T$, we obtain
\begin{equation}\label{QW2}
\begin{aligned}
Q+NLNW
&=N(\partial u)^\T A\cdot(J\partial u)N
-N(\partial u)^\T\nabla_\omega  \big((J\partial u)N\big)\\
&=N(\partial u)^\T \Big[A,J\Big](\partial u)N
+N(\partial u)^\T JA(\partial u)N\\
&\quad-N(\partial u)^\T J(\nabla_\omega\partial u)N
-N(\partial u)^\T J(\partial u) \nabla_\omega N\\
&=N(\partial u)^\T \Big[A,J\Big](\partial u)N
+N(\partial u)^\T J(\partial E) N
-NL\nabla_\omega N.
\end{aligned}
\end{equation}
We can then directly solve from (\ref{QW1})(\ref{QW2}) the expression of $Q,W$:
\begin{equation}\label{QW}
\begin{aligned}
Q&=S
+\big(I_n+(NL)^2\big)^{-1}
\Big(NLN(\partial E)^\T(\partial u)N
+N(\partial u)^\T J(\partial E)N-NL\nabla_\omega N\Big)\\
W&=LQ-(\partial E)^\T(\partial u)N.
\end{aligned}
\end{equation}
Therefore, we immediately see that both $W-LS$ and $Q-S$ are (matrix) rational functions of $L$ and $\partial E$, vanishing linearly in $L$ and $\partial E$, with coefficients being rational functions of $\partial u$ and $\partial^2u$.

Combining (\ref{C1}) and (\ref{C2_temp}), we find that the matrix $AM-(\nabla_\omega M)$ has block-form expression
$$
\begin{aligned}
AM-(\nabla_\omega M)
&=\big(\partial E\quad M_1S\big)
+\Big(0_{2n\times n} \quad M_1(Q-S)+M_2W\Big)\\
&=M\begin{pmatrix}
0_n & S \\
0_n & 0_n
\end{pmatrix}
+\Big(\partial E \quad M_1(Q-S)+M_2W\Big).
\end{aligned}
$$
Therefore, the remainder $\bm{B}(E,L)=\big(\partial E \quad M_1(Q-S)+M_2W\big)$, which, by (\ref{QW}), is a matrix rational function of $L$ and $\partial E$, vanishing linearly in $L$ and $\partial E$, with coefficients being rational functions of $A$, $\partial u$ and $\partial^2u$.
\end{proof}

\begin{remark}
Lemma \ref{Lin_F(h,u)} is in fact equivalent to Lemma 20 of \cite{DGJV2005}. It also appeared in \cite{BB2015}. However, we are not able to find an explicit expression for $D_u\mathscr{F}(h,u)v$ in these references, which is the reason that we choose to write down our own proof. We observe that (\ref{F_u(h,u)}) plays the same role as (\ref{F_U(f,U)}).
\end{remark}

A key feature of the Lagrangian character $L[u]$ is that it is in proportion to the error $E=\mathscr{F}(h,u)$. In fact by Lemma 19 of \cite{DGJV2005} or Lemma 5 of \cite{BB2015}, if we define the matrix function
$$
L_1(u,E):=\partial
\big((\partial u)^\T\cdot JE\big)-\left[\partial\big((\partial u)^\T\cdot JE\big)^\T\big)\right]^\T,
$$
then there holds
\begin{equation}\label{Lack}
\nabla_\omega L[u]
=L_1(u,\mathscr{F}(h,u))
\quad\text{or equivalently}\quad
L[u]
=\nabla_\omega ^{-1}L_1(u,\mathscr{F}(h,u)).
\end{equation}
Note that due to the presence of $\partial$, the function $L_1(u,E)$ necessarily has average zero. In other words, the matrix function $\nabla_\omega L[u]$ is nothing but the matrix representation of the exterior differential of the 1-form represented by $(\partial u)^\T\cdot J\mathscr{F}(h,u)$. Of course, this is a refined version of the observation that an invariant torus is necessarily a Lagrangian submanifold. Therefore, the mapping $\bm{B}\big(\mathscr{F}(h,u),L[u]\big)$ in (\ref{F_u(h,u)}) vanishes linearly in $E$ if $u$ is approximately invariant:
\begin{corollary}\label{B(E,L)}
Suppose $\omega$ satisfies the Diophantine condition (\ref{Dio}), so that (\ref{ContHs}) holds. Suppose $u\in H^s$ for some $s>\max(\sigma+2,n/2+2)$, while $|u-\zeta_0|_{C^2}$ is sufficiently close to zero. Then for any constant shift $(0,\mu)^\T\in\xR^{n}\times\xR^n$, there holds
$$
\big\|\bm{B}\big(\partial\mathscr{F}(h,u),L[u]\big)\big\|_{H^{s-(\sigma+2)}}
\leq \gamma K_s\big(|h|_{C^{[s]+3}},\|u\|_{H^s}\big)\big\|\mathscr{F}(h,u)+(0,\mu)^\T\big\|_{H^{s-1}}.
$$
The function $K_s$ is increasing in both arguments and does not depend on the shift $(0,\mu)^\T\in\xR^{n}\times\xR^n$.
\end{corollary}
\begin{proof}
We have already seen that $\bm{B}\big(\partial\mathscr{F}(h,u),L[u]\big)$ is a rational function of $\partial\mathscr{F}(h,u)$ and $L[u]$, vanishing linearly as the arguments $\to0$, with coefficients being rational functions of $A[u],\partial u,\partial^2 u$. Since $|u-\zeta_0|_{C^2}$ is close to 0, we find that both $\partial\mathscr{F}(h,u)$ and $L[u]$ are $C^0$ close to 0, since they only involve $u$ and $\partial u$. Consequently e.g. the matrix $I_n+(N[u]L[u])^2$ is $H^{s-1}$ close to the identity matrix.

Obviously, shifting $\mathscr{F}(h,u)$ by a constant addendum does not affect the value of $\bm{B}\big(\partial\mathscr{F}(h,u),L[u]\big)$. Therefore, 
$$
\big\|\bm{B}\big(\partial\mathscr{F}(h,u),L[u]\big)\big\|_{H^{s-(\sigma+2)}}
\leq K_s\big(|h|_{C^{[s]+3}},\|u\|_{H^s}\big)\left(\big\|\mathscr{F}(h,u)+(0,\mu)^\T\big\|_{H^{s-1}}
+\|L[u]\|_{H^{s-(\sigma+2)}}\right).
$$
Now we make use of (\ref{Lack}). Note that for the matrix function 
$$
L_1(u,E)=\partial
\big((\partial u)^\T\cdot JE\big)-\left[\partial\big((\partial u)^\T\cdot JE\big)^\T\big)\right]^\T,
$$
shifting $E$ by a constant addendum $(0,\mu)^\T$ yields the same value of $L_1(u,E)$, by a direct computation (using $\partial_i\partial_ju=\partial_j\partial_iu$). Therefore, by (\ref{Lack}), we have
$$
\begin{aligned}
\|L[u]\|_{H^{s-(\sigma+2)}}
&\leq \gamma \big\|L_1\big(u,\mathscr{F}(h,u)+(0,\mu)^\T\big)\big\|_{H^{s-2}}\\
&\lesssim \gamma K_s\big(\|u\|_{H^s}\big)\big\|\mathscr{F}(h,u)+(0,\mu)^\T\big\|_{H^{s-1}}.
\end{aligned}
$$
This concludes the proof.
\end{proof}

\subsection{Deriving Para-homological Equation}
So far we have been mostly quoting and re-writing the algebraic findings reported in Section 8 of \cite{DGJV2005}. However, from this point, we will proceed quite differently from \cite{DGJV2005}, or any other known reference on KAM theory, e.g. \cite{BB2015} -- instead of a Newtonian algorithm involving approximate right inverse, we employ the idea of Subsection \ref{HeuGen} to directly solve the para-homological equation. We may treat Theorem \ref{Thm1} and \ref{Thm2} in a unified manner, since for \ref{Thm1} it suffices to require that the parameter $\xi$ must be zero.

So we restrict to a neighbourhood of the already known approximate solution $\zeta_0$. We notice that $\mathscr{F}(h_\xi,u)$ is a first order nonlinear partial differential operator acting on the unknown $(u,\xi)$, and in fact no differential in the constant $\xi$ is involved at all. Let us recall from (\ref{Error1}) and (\ref{Error2}) that $e_0=\mathscr{F}(h,\zeta_0)$. Then the para-linearization formula
\begin{equation}\label{PLF(h,U)}
\begin{aligned}
\mathscr{F}(h_\xi,u)+\begin{pmatrix} 0 \\ \mu\end{pmatrix}
&=e_0+
\begin{pmatrix} \xi \\ \mu\end{pmatrix}
+T_{D_u\mathscr{F}(h,u)}(u-\zeta_0)
+\R_{\PL}\big(X_h(\zeta_0+\cdot\,),u-\zeta_0\big)(u-\zeta_0)
\end{aligned}
\end{equation}
is valid. The expression $\bm{B}(E,L[u])$ involves only $\partial E$, so it does not change if $E$ is shifted by a constant vector. Thus, writing for simplicity
$$
E:=\mathscr{F}(h_\xi,u)+\begin{pmatrix} 0 \\ \mu\end{pmatrix},
$$
we actually have\footnote{Our notation (\ref{ParaMult}) is that $\Op^\PM(a)=T_a$, and we prefer to use $\Op^\PM$ here in case the expression of $a$ is very lengthy.}, by (\ref{F_u(h,u)}), the following equality:
$$
\begin{aligned}
T_{D_u\mathscr{F}(h,u)}(u-\zeta_0)
&=
\Op^\PM\left[M[u]\begin{pmatrix}
0_n & T_{S[u]} \\
0_n & 0_n
\end{pmatrix}M[u]^{-1}\right](u-\zeta_0)\\
&\quad-\Op^{\PM}\Big({M[u]}\big(\nabla_\omega M[u]^{-1}\big)+\nabla_\omega \Big)(u-\zeta_0)
+\Op^\PM\left(\bm{B}(E,L[u]) M[u]^{-1}\right)(u-\zeta_0).
\end{aligned}
$$
Thus the para-linearization formula (\ref{PLF(h,U)}) becomes
\begin{equation}\label{PLF(h,u)}
\begin{aligned}
E&=
e_0+\begin{pmatrix} \xi \\ \mu\end{pmatrix}
+T_{M[u]}\begin{pmatrix}
0_n & T_{S[u]} \\
0_n & 0_n
\end{pmatrix}T_{M[u]^{-1}}(u-\zeta_0)
-T_{M[u]}\nabla_\omega T_{M[u]^{-1}}(u-\zeta_0)\\
&\quad
+\Op^\PM\left(\bm{B}(E,L[u]) M[u]^{-1}\right)(u-\zeta_0)
+\R_{\CM}[u](u-\zeta_0)
+\R_{\PL}\big(X_h(\zeta_0+\cdot\,),u-\zeta_0\big)(u-\zeta_0).
\end{aligned}
\end{equation}
Here the remainder
\begin{equation}\label{RCM}
\begin{aligned}
\R_{\CM}[u]
&=\Op^\PM\left[M[u]\begin{pmatrix}
0_n & T_{S[u]} \\
0_n & 0_n
\end{pmatrix}M[u]^{-1}\right]
-T_{M[u]}\begin{pmatrix}
0_n & T_{S[u]} \\
0_n & 0_n
\end{pmatrix}T_{M[u]^{-1}}\\
&\quad-\Op^{\PM}\Big[{M[u]}\big(\nabla_\omega M[u]^{-1}\big)+\nabla_\omega \Big]
+T_{M[u]}\nabla_\omega T_{M[u]^{-1}}
\end{aligned}
\end{equation}
is produced in view of Proposition \ref{ContCM}. Note that we used the Leibniz rule $\partial T_au=T_{\partial a}u+T_a\partial u$.

We can now formulate the para-homological equation in the unknown $(u,\xi,\mu)$, where $\xi,\mu\in\xR^n$ are constant parameters to be fixed. The equation reads
\begin{equation}\label{ParaHomoHamilton}
\begin{aligned}
T_{M[u]}&\begin{pmatrix}
0_n & T_{S[u]} \\
0_n & 0_n
\end{pmatrix}T_{M[u]^{-1}}(u-\zeta_0)
-T_{M[u]}\nabla_\omega T_{M[u]^{-1}}(u-\zeta_0)
+\begin{pmatrix}
\xi \\ \mu
\end{pmatrix} \\
&=-e_0
-\R_{\CM}[u](u-\zeta_0)
-\R_{\PL}\big(X_h(\zeta_0+\cdot\,),u-\zeta_0\big)(u-\zeta_0)
\,.
\end{aligned}
\end{equation}
In other words, it aims to cancel out all terms in the right-hand-side of (\ref{PLF(h,u)}) but the one linear in $E$.

In order to solve the para-homological equation (\ref{ParaHomoHamilton}), we need a lemma concerning \emph{linear} para-homological equations, which is essentially the simple equation $\nabla_\omega v=f$.
\begin{lemma}\label{LinParaHomo}
Let $\omega$ be a Diophantine frequency vector satisfying (\ref{Dio}). Set 
$$
M_Q=\Bigg\{\begin{aligned}
\max\Big(\big|(\Avg Q)^{-1}\big|&,|Q|_{L^\infty}\Big)
&\quad\text{under the assumption of Theorem \ref{Thm1},}\\
&|Q|_{L^\infty}&\text{ under the assumption of Theorem \ref{Thm2}.}
\end{aligned}
$$
There are constants $\rho_1,\rho_2$ depending on $|h|_{C^3}$ and $M_Q$ with the following property. If 
$|e_0|_{C^1}\leq\rho_1$, and the embedding $u:\xT^n\mapsto\xT^n\times\xR^n$ is such that $|u-\zeta_0|_{C^1}\leq \rho_2$, then the linear para-homological equation in the unknown $(v,\xi,\mu)$
\begin{equation}\label{LinParaHomo'}
T_{M[u]}\begin{pmatrix}
0_n & T_{S[u]} \\
0_n & 0_n
\end{pmatrix}T_{M[u]^{-1}}v
-T_{M[u]}\nabla_\omega T_{M[u]^{-1}}v
+\begin{pmatrix}
\xi \\ \mu
\end{pmatrix}
=f
\end{equation}
has a linear solution operator 
$$
\big(
v^x, \,
v^y, \,
\xi, \,
\mu
\big)
=\big(
\mathfrak{L}^x[u]f, \,
\mathfrak{L}^y[u]f, \,
P^x[u]f, \,
P^y[u]f
\big),
$$
satisfying the following estimates: with $K_s$ being increasing functions in the arguments,
$$
\begin{aligned}
\big\|\mathfrak{L}^x[u]f\big\|_{H^s}
&\leq K_s\big(M_Q,|h|_{C^3}\big)
\gamma^2\|f\|_{H^{s+2\sigma}} \\
\big\|\mathfrak{L}^y[u]f\big\|_{H^s}
&\leq K_s\big(M_Q,|h|_{C^3}\big)\gamma\|f\|_{H^{s+\sigma}} \\
|P^x[u]f|+|P^y[u]f|
&\leq K_s\big(M_Q,|h|_{C^3}\big)\gamma\|f\|_{H^{s+\sigma}}.
\end{aligned}
$$
Moreover, the four linear operators of concern are all continuously differentiable mappings from $u\in C^1$ to the space of linear operators (with operator norm). In particular, the solution operator $\xi=P^x[u]f$ is fixed as 0 under the assumption of Theorem \ref{Thm1}.
\end{lemma}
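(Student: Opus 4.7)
The plan is to diagonalize the principal part of (\ref{LinParaHomo'}) via the para-differential change of unknown $w = T_{M[u]^{-1}} v$, which reduces the problem to inverting a block-triangular system built out of $-(\omega\cdot\partial)$ and $T_{S[u]}$; this can be solved explicitly by (\ref{ContHs}) together with averaging identities that fix the auxiliary constants $\xi, \mu$. The composition remainders $T_{M[u]}T_{M[u]^{-1}} - I$ and $T_{M[u]^{-1}}T_{M[u]} - I$ produced by Proposition \ref{ContCM} are smoothing of order $r = s - n/2 > 2\sigma + 2$, so the loss of $2\sigma$ derivatives in the principal solution operator is more than compensated and these remainders can be absorbed via a Neumann-type iteration under the smallness hypotheses.

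After multiplying (\ref{LinParaHomo'}) on the left by $T_{M[u]^{-1}}$ and using $T_{M[u]^{-1}} T_{M[u]} = I + \text{(smoothing)}$ together with the identity $T_a c = (\Avg a)\, c$ valid for constant vectors $c$, the equation takes the schematic form
\begin{equation*}
\mathcal{L}_{\mathrm{diag}}[u]\, w + \Avg(M[u]^{-1}) \begin{pmatrix} \xi \\ \mu \end{pmatrix} = T_{M[u]^{-1}} f + \mathcal{R}[u]\, w, \quad \mathcal{L}_{\mathrm{diag}}[u] := \begin{pmatrix} 0_n & T_{S[u]} \\ 0_n & 0_n \end{pmatrix} - (\omega\cdot\partial)\, I_{2n},
\end{equation*}
where $\mathcal{R}[u] : H^s \to H^{s+r}$ is a bounded operator collecting all composition remainders, vanishing identically when $M[u]$ is constant (i.e. at $u=\zeta_0$). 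I would then construct the inverse of $\mathcal{L}_{\mathrm{diag}}[u]$ modulo the auxiliary constants component-wise. Writing $w = (w^x, w^y)^{\mathsf{T}}$ and $g = T_{M[u]^{-1}} f = (g^x, g^y)^{\mathsf{T}}$, the lower block $-(\omega\cdot\partial) w^y + \mu^* = g^y$ forces $\mu^* = \Avg g^y$ (which determines $\mu$ after inverting $\Avg M[u]^{-1}$, close to $\mathrm{diag}(I_n,-I_n)$) and gives $w^y = -(\omega\cdot\partial)^{-1}(g^y - \mu^*) + C_y$ with $C_y \in \xR^n$ free. The upper block $-(\omega\cdot\partial) w^x + T_{S[u]} w^y + \xi^* = g^x$ has solvability condition
\begin{equation*}
\Avg(S[u])\, C_y + \Avg\bigl(T_{S[u]}[w^y - C_y]\bigr) + \xi^* = \Avg g^x,
\end{equation*}
where the term $\Avg(S[u]) C_y$ arises because $T_a C_y = (\Avg a) C_y$ on constants. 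Under the hypothesis of Theorem \ref{Thm1}, $\Avg S[u]$ is close to $\Avg Q$ and hence invertible, so I set $\xi = 0$ and solve for $C_y$; under the hypothesis of Theorem \ref{Thm2}, I set $C_y = 0$ and solve for $\xi$. Then $w^x$ is recovered by a second application of $(\omega\cdot\partial)^{-1}$.

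The quantitative estimates then follow from (\ref{ContHs}) by bookkeeping the number of inversions: $w^y$ involves one $(\omega\cdot\partial)^{-1}$, giving the factor $\gamma$ with loss $\sigma$; $w^x$ involves two, giving $\gamma^2$ with loss $2\sigma$; the constants $\xi, \mu$ are extracted from mean-value identities that involve at most one $(\omega\cdot\partial)^{-1}$, so they obey the $\gamma \|f\|_{H^{s+\sigma}}$ bound via $H^{s+\sigma} \hookrightarrow L^\infty$. Transforming back to $v = T_{M[u]} w$ preserves all bounds by Proposition \ref{ContPM}, and continuous differentiability of the four solution operators in $u \in C^1$ follows from the smooth dependence of the para-product symbols $M[u]$, $M[u]^{-1}$, $S[u]$ on $(u, \partial u)$ combined with Remark \ref{DiffMeyer}.

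The main obstacle will be the Neumann-type absorption of $\mathcal{R}[u] w$ in the right-hand side. Denoting by $\Psi_0[u]$ the principal solution operator constructed in the previous step (losing $2\sigma$ derivatives with gain $\gamma^2$), one solves $w = \Psi_0[u](T_{M[u]^{-1}} f - \mathcal{R}[u] w)$ by iteration. Since $\mathcal{R}[u]$ gains $r$ derivatives, the composition $\Psi_0[u] \circ \mathcal{R}[u]$ maps $H^s$ into $H^{s+r-2\sigma}$ with $r-2\sigma > 2$, and its operator norm on $H^s$ can be made arbitrarily small by taking $|u-\zeta_0|_{C^1}$ small (the remainder $\mathcal{R}[u]$ vanishes at $u=\zeta_0$ where $M[u]$ is constant) and $|e_0|_{C^1}$ small. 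This yields the threshold constants $\rho_1, \rho_2$ and closes the construction, with all bounds tracking as claimed.
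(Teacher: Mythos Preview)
Your overall architecture---substitute $w=T_{M[u]^{-1}}v$, reduce to a block-triangular system in $(\omega\cdot\partial)$ and $T_{S[u]}$, solve componentwise with the two cases for fixing $C_y$ versus $\xi$, then count the $(\omega\cdot\partial)^{-1}$'s---matches the paper's proof and is correct. The difference, and the place where your argument breaks, is in how you remove the outer $T_{M[u]}$.

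You multiply on the left by $T_{M[u]^{-1}}$ and carry the composition remainder $\mathcal{R}[u]=\R_{\CM}(M[u]^{-1},M[u])(\,\cdot\,)$ as a perturbation to be absorbed by Neumann iteration. But the iteration requires $\Psi_0[u]\circ\mathcal{R}[u]$ to contract on $H^s$, and since $\Psi_0[u]$ loses $2\sigma$ derivatives, you need $\mathcal{R}[u]:H^s\to H^{s+2\sigma}$ with small norm. Proposition~\ref{ContCM} gives
\[
\bigl\|\R_{\CM}(M[u]^{-1}-M_0^{-1},\,M[u]-M_0)\bigr\|_{\mathcal{L}(H^s,H^{s+2\sigma})}
\lesssim \bigl|M[u]^{-1}-M_0^{-1}\bigr|_{C^{2\sigma}_*}\bigl|M[u]-M_0\bigr|_{C^{2\sigma}_*},
\]
and since $M[u]$ depends on $\partial u$, controlling these $C^{2\sigma}_*$ norms requires roughly $C^{2\sigma+1}$ control of $u-\zeta_0$. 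The lemma, however, only hypothesizes $|u-\zeta_0|_{C^1}\le\rho_2$; your claim that $\mathcal{R}[u]$ is small ``by taking $|u-\zeta_0|_{C^1}$ small'' is not justified, and the threshold constants $\rho_1,\rho_2$ would then have to depend on higher norms of $u$ (or on $\gamma$), contrary to the statement. The same issue recurs when you write ``transforming back to $v=T_{M[u]}w$'': this is not the inverse of $w=T_{M[u]^{-1}}v$, and the discrepancy is another $\R_{\CM}$ term.

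The fix is simpler than your scheme: instead of $T_{M[u]^{-1}}$, apply the \emph{operator inverse} $T_{M[u]}^{-1}$. Because $M[u]=\mathrm{diag}(I_n,-I_n)+O(\partial(u-\zeta_0))$, Proposition~\ref{ContPM} gives $\|T_{M[u]}-M_0\|_{\mathcal{L}(H^s,H^s)}\lesssim|u-\zeta_0|_{C^1}$ for every $s$, so $T_{M[u]}^{-1}$ exists on each $H^s$ by an ordinary Neumann series once $|u-\zeta_0|_{C^1}$ is small---no smoothing gain is needed, and no $\mathcal{R}[u]$ ever appears. (One checks directly that $T_{M[u]}^{-1}$ maps constants to constants, so the parameters $\xi,\mu$ transform cleanly.) After this single substitution the triangular system is exact, and your componentwise solution and derivative-counting go through verbatim. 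This is precisely what the paper does.
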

\begin{proof}
Since $M[u]=\mathrm{diag}(I_n,-I_n)+O\big(\partial(u-\zeta_0)\big)$, it follows that $T_{M[u]}$ and $T_{M[u]^{-1}}$ are both invertible bounded operator from $H^s$ to $H^s$ if $|u-\zeta_0|_{C^1}$ is small. Note further that for a constant $\lambda$ and a function $A$, there holds $T_A\lambda=(\Avg A)\lambda$. Thus, writing
\begin{equation}\label{munu}
T_{M[u]}^{-1}\begin{pmatrix}
\xi \\ \mu
\end{pmatrix}=\begin{pmatrix}
\xi_1 \\ \mu_1
\end{pmatrix},
\quad
T_{M[u]}^{-1}f=f_1,
\quad 
T_{M[u]^{-1}}v=v_1
\end{equation}
for some constant vectors $\xi_1,\mu_1\in\xR^n$ to be determined, the linear para-homological equation (\ref{LinParaHomo'}) is equivalent to
$$
\begin{pmatrix}
0_n & T_{S[u]} \\
0_n & 0_n
\end{pmatrix}v_1
-\nabla_\omega v_1
+\begin{pmatrix}
\xi_1 \\ \mu_1
\end{pmatrix}
=f_1.
$$
Furthermore, recalling the expression (\ref{h(x,y)}) of $h(x,y)$ and (\ref{S[u]}), we have
\begin{equation}\label{S[u]'}
S[u]=Q+\partial^2a_0+O\big(\partial(u-\zeta_0)\big),
\end{equation}
where the implicit constant depends on $|h|_{C^3}$. Noting that
\begin{equation}\label{e_0}
e_0=\mathscr{F}(h,\zeta_0)
=\begin{pmatrix}
a_1-\omega \\
\partial a_0
\end{pmatrix},
\end{equation}
it follows that $S[u]$ is $C^0$-close to $Q$ if $|e_0|_{C^1}$ and $|u-\zeta_0|_{C^1}$ are small. 

In components of $v_1$, the linear para-homological equation becomes
$$
\begin{aligned}
T_{S[u]}v_1^y-\nabla_\omega v_1^x+\xi_1&=f_1^x, \\
-\nabla_\omega v_1^y+\mu_1&=f_1^y.
\end{aligned}
$$
We discuss how to solve this system under the assumption of Theorem \ref{Thm1} and \ref{Thm2} separately. For simplicity of notation, we define
$$
\mathcal{A}f:=f-\Avg f.
$$

\subsubsection*{Case 1: $\Avg Q$ is Invertible}
This is the non-degeneracy condition in Therorem \ref{Thm1}. We fix $\xi=0$ in this case, and obtain from (\ref{munu}) the following:
\begin{equation}\label{munu1}
\begin{pmatrix}
0 \\ \mu
\end{pmatrix}
=\begin{pmatrix}
\xi_1 \\ -\mu_1
\end{pmatrix}
+O\big(\partial(u-\zeta_0)\big)\begin{pmatrix}
\xi_1 \\ \mu_1
\end{pmatrix}.
\end{equation}
We first solve 
\begin{equation}\label{Vymu1}
v_1^y=\Avg v_1^y-\nabla_\omega ^{-1}\mathcal{A}f_1^y,
\quad
\mu_1=\Avg f_1^y,
\end{equation}
where $\Avg v_1^y$ is yet to be determined. With (\ref{munu1}), one can solve $\xi_1$, thus $\mu$, as linear mapping of $\mu_1$. Since the norm of $T_{M[u]}^{-1}$ from $H^s$ to $H^s$ depends only on $|u-\zeta_0|_{C^1}$, this immediately yields
$$
\big\|\mathcal{A}v_1^y\big\|_{H^s}+|\mu|+|\xi_1|
\leq C_s\gamma\|f\|_{H^{s+\sigma}}.
$$
Here we used (\ref{ContHs}).

We then choose $\Avg v_1^y$ to balance the mean value of the equation along $x$-direction. By assumption $S[\zeta_0]$ is invertible, so by (\ref{S[u]'}), the matrix-valued function $S[u]=Q+O(\partial e_0)+O\big(\partial(u-\zeta_0)\big)$ is still invertible if $|e_0|_{C^1}$ and $|u-\zeta_0|_{C^1}$ are small, and the implicit constant here depends on $|h|_{C^3}$. In that case, $\big|(\Avg S[u])^{-1}\big|\lesssim M_Q$. We can then set
$$
\Avg v_1^y=\big(\Avg S[u]\big)^{-1}\Avg\big(f_1^x-\xi_1-T_{S[u]}\mathcal{A}v_1^y\big),
$$
and solve
$$
v_1^x=\nabla_\omega ^{-1}\mathcal{A}
\big(T_{S[u]}\mathcal{A}v_1^y+\xi_1-f_1^x\big).
$$
By (\ref{S[u]}), $S[u]$ depends on up to second order derivative of $h$, so the solutions are immediately estimated as
$$
\big\|v_1^x\big\|_{H^s}
\leq K_s\big(M_Q,|h|_{C^3}\big)
\gamma^2\|f\|_{H^{s+2\sigma}}.
$$
Here we used (\ref{ContHs}) once again.

\subsubsection*{Case 2: Non-trivial Parameter $\xi$}
This case matches with Theorem \ref{Thm2}, whence no requirement about non-degeneracy of $Q$ is posed. In this case the parameter $\xi$ cannot be fixed as 0. But we still obtain from (\ref{munu}) the following:
\begin{equation}\label{munu2}
\begin{pmatrix}
\xi \\ \mu
\end{pmatrix}
=\begin{pmatrix}
\xi_1 \\ -\mu_1
\end{pmatrix}
+O\big(\partial(u-\zeta_0)\big)\begin{pmatrix}
\xi_1 \\ \mu_1
\end{pmatrix}.
\end{equation}
We then first solve 
\begin{equation}\label{Vymu2}
v_1^y=-\nabla_\omega ^{-1}\mathcal{A} f_1^y,
\quad
\mu_1=\Avg f_1^y,
\end{equation}
i.e. the mean value of $v_1^y$ is fixed as 0. For the equation along $x$-direction, we solve 
$$
v_1^x=-\nabla_\omega ^{-1}\mathcal{A} \big(f_1^x-T_{S[u]}v_1^y\big),
\quad
\xi_1=\Avg \big(f_1^x-T_{S[u]}v_1^y\big).
$$
Using (\ref{ContHs}), they immediately yield the similar estimates as in Case 1.

Finally, continuous differentiability of the operators for $u\in C^1$ follows immediately from Proposition \ref{ContPM} and \ref{ContCM}, that is, continuous differentiability of $T_a$ in $a$ and $T_{ab}-T_aT_b$ in $(a,b)$.
\end{proof}

\subsection{Proof of Theorem \ref{Thm1}-\ref{Thm2}}\label{ProofofThm}
Recall that we set $r=s-n/2$, and $N_{s+r}$ to be the least integer $>s+r$. Since we are working with a perturbative problem, we assume that e.g. $\|u-\zeta_0\|_{H^s}\leq2$, and the requirement of Lemma \ref{LinParaHomo} is also fulfilled: with $\rho_1=\rho_1\big(M_Q,|h|_{C^3}\big)$, $\rho_2=\rho_2\big(M_Q,|h|_{C^3}\big)$ as in Lemma \ref{LinParaHomo}, we have
\begin{equation}\label{C1bound}
\|u-\zeta_0\|_{H^s}\leq2,
\quad
|e_0|_{C^1}\leq \rho_1,\quad |u-\zeta_0|_{C^1}\leq\rho_2.
\end{equation}

The para-homological equation (\ref{ParaHomoHamilton}) is then converted to the following para-inverse equation:
\begin{equation}\label{ParaInvHamilton}
\begin{aligned}
u-\zeta_0
&=-\begin{pmatrix}
\mathfrak{L}^x[u]-P^x[u] \\
\mathfrak{L}^y[u]-P^y[u] 
\end{pmatrix}e_0\\
&\quad
-\begin{pmatrix}
\mathfrak{L}^x[u]-P^x[u] \\
\mathfrak{L}^y[u]-P^y[u]
\end{pmatrix}\left(\R_{\CM}[u](u-\zeta_0)
-\R_{\PL}\big(X_h(\zeta_0+\cdot\,),u-\zeta_0\big)(u-\zeta_0)\right)\\
&=:\mathcal{G}_1(u)+\mathcal{G}_2(u)
\end{aligned}
\end{equation}

We analyze the right-hand of (\ref{ParaInvHamilton}). The bound (\ref{C1bound}) meets the requirement of Lemma \ref{LinParaHomo}, so
\begin{equation}\label{G_1}
\begin{aligned}
\|\mathcal{G}_1(u)\|_{H^{s+\eps}}
&\leq K_s\big(M_Q,|h|_{C^3}\big)\gamma^2\|e_0\|_{H^{s+2\sigma+\eps}}.
\end{aligned}
\end{equation}

We turn to $\mathcal{G}_2$. The para-product smoothing operator $\R_{\CM}[u]$ in $\mathcal{G}_2$ has expression (\ref{RCM}). We recall that $M[u]=\mathrm{diag}(I_n,-I_n)+O\big(\partial(u-\zeta_0)\big)$, while obviously 
$$
\big|S[u]\big|_{C^{r-1}_*}\quad\text{and}\quad
\big|\partial M[u]^{-1}\big|_{C^{r-2}_*}
$$
are both controlled by increasing functions of $|h|_{C^{N_{s+r}}}$ and $\|u\|_{H^s}$. Using Proposition \ref{ContCM}, noting $M[u]$ is almost a constant matrix, it follows that
\begin{equation}\label{RCMEst}
\big\|\R_{\CM}[u](u-\zeta_0)\big\|_{H^{s+r-2}}
\leq K_s\big(|h|_{C^{N_{s+r}}}\big)\|u-\zeta_0\|_{H^s}^2,
\end{equation} 
and $\R_{\CM}[u](u-\zeta_0)$ depends continuously on $u$ (with respect to Sobolev norms $H^s$ and $H^{s+r-2}$).

For the para-linearization remainder $\R_{\PL}$, by Proposition \ref{Bony}, recalling (\ref{A[u]}),
\begin{equation}\label{RPLEq}
\R_{\PL}\big(X_h(\zeta_0+\cdot\,),u-\zeta_0\big)(u-\zeta_0)
=\R_{\PL;1}\big(A[\zeta_0]\big)(u-\zeta_0)
+\R_{\PL;2}\big(X_h(\zeta_0+\cdot\,),u-\zeta_0\big)(u-\zeta_0).
\end{equation}
Recalling further the definition (\ref{h(x,y)}) of $h(x,y)$, we find 
$$
A[\zeta_0]
=\begin{pmatrix}
\partial a_1(\theta) & Q(\theta) \\
\partial^2a_0(\theta) & -(\partial a_1)^\T(\theta) 
\end{pmatrix}.
$$
On the other hand, recalling the definition (\ref{e_0}), we find
$$
\begin{aligned}
\|a_1-\omega\|_{H^{s+\sigma+\eps}}
+\|\partial a_0\|_{H^{s+\sigma+\eps}}
&=\|e_0\|_{H^{s+2\sigma+\eps}},
\end{aligned}
$$
that is, $a_0$ is almost a constant, and $a_1$ almost equals the constant vector $\omega$. As a result, we find $A[\zeta_0]$ is almost a constant:
$$
\begin{aligned}
\big|A[\zeta_0]-\Avg A[\zeta_0]\big|_{C^r_*}
&\leq C_s\big(\|a_1-\omega\|_{H^{s+\sigma+\eps}}
+\|\partial a_0\|_{H^{s+\sigma+\eps}}
+|Q-\Avg Q|_{C^r_*}\big)\\
&\leq C_s\big(\|e_0\|_{H^{s+2\sigma+\eps}}+|e_1|_{C^r_*}\big).
\end{aligned}
$$
Implementing the para-linearization theorem i.e. Proposition \ref{Bony}, we estimate (\ref{RPLEq}) as (note that $X_h=J\nabla_{x,y}h\in C^{N_{s+r}+2}$)
\begin{equation}\label{RPLEst}
\begin{aligned}
\big\|\R_{\PL}&\big(X_h(\zeta_0+\cdot\,),u-\zeta_0\big)(u-\zeta_0)\big\|_{H^{s+r}} \\
&\leq C_s\Big(\big(\|e_0\|_{H^{s+2\sigma+\eps}}+|e_1|_{C^r_*}\big)\|u-\zeta_0\|_{H^s}
+|h|_{C^{N_{s+r}+3}}\|u-\zeta_0\|_{H^s}^2\Big).
\end{aligned}
\end{equation}
Furthermore, by Proposition \ref{Bony}, the operator $\R_{\PL}\big(X_h(\zeta_0+\cdot\,),u-\zeta_0\big)\in\mathcal{L}(H^s,H^{s+r})$ depends continuously on $u\in H^s$.

Recall that by assumption, $s+r-2=2s-n/2-2\geq s+2\sigma+\eps$. Combining (\ref{G_1})(\ref{RCMEst})(\ref{RPLEst}), using Lemma \ref{LinParaHomo}, we conclude that if (\ref{C1bound}) holds, then
\begin{equation}\label{G1G2}
\begin{aligned}
\|\mathcal{G}_1(u)&\|_{H^{s+\eps}}
+\|\mathcal{G}_2(u)\|_{H^{s+\eps}}\\
&\leq K_s\big(M_Q,|h|_{C^{N_{s+r}+3}}\big)\gamma^{2}
\Big(\|e_0\|_{H^{s+2\sigma+\eps}}+\big(\|e_0\|_{H^{s+2\sigma+\eps}}+|e_1|_{C^r_*}\big)\|u-\zeta_0\|_{H^s}
+\|u-\zeta_0\|_{H^s}^2\Big).
\end{aligned}
\end{equation}
By some elementary algebra with quadratic polynomials, there are decreasing functions $c_0,c_1$ and an increasing function $c_2$, with the following property: if 
$$
\|e_0\|_{H^{s+2\sigma+\eps}}
\leq\gamma^{-4}c_0\big(M_Q,|h|_{C^{N_{s+r}+3}}\big),
\quad
|e_1|_{C^r_*}
\leq \gamma^{-2}c_1\big(M_Q,|h|_{C^{N_{s+r}+3}}\big),
$$
then the positive number $\rho=\|e_0\|_{H^{s+\sigma+\eps}}\gamma^{-2}c_2\big(M_Q,|h|_{C^{N_{s+r}+3}}\big)$ satisfies the quadratic inequality
$$
K_s\big(|h|_{C^{N_{s+r}+3}}\big)\gamma^2
\Big(\|e_0\|_{H^{s+2\sigma+\eps}}+\big(\|e_0\|_{H^{s+2\sigma+\eps}}+|e_1|_{C^r_*}\big)\rho
+\rho^2\Big)\leq\rho.
$$
Here $K_s$ is the increasing function in (\ref{G1G2}). In other words, 
$$
\|\mathcal{G}_1(u)\|_{H^{s+\eps}}
+\|\mathcal{G}_2(u)\|_{H^{s+\eps}}
\leq \rho
\quad\text{if }\|u-\zeta_0\|_{H^s}\leq\rho\text{ and (\ref{C1bound}) holds.}
$$
In that case, the mapping $\zeta_0+\mathcal{G}_1+\mathcal{G}_2$ in the right-hand-side of (\ref{ParaInvHamilton}) maps the closed ball $\bar B_{\rho}(\zeta_0)\subset H^s$ to itself. Furthermore it is continuous with respect to $u\in \bar B_{\rho}(\zeta_0)\subset H^s$ and the image is totally bounded (since the embedding $H^{s+\eps}\subset H^s$ is compact). By the Schauder fixed point theorem, the mapping $u\mapsto\zeta_0+\mathcal{G}_1(u)+\mathcal{G}_2(u)$  has a fixed point $u\in\bar B_{\rho}(\zeta_0)$, and in fact
\begin{equation}\label{ubound}
\|u-\zeta_0\|_{H^s}
\leq c_2\big(M_Q,|h|_{C^{N_{s+r}+3}}\big)\gamma^2\|e_0\|_{H^{s+2\sigma+\eps}}.
\end{equation}

Having found a solution $u$ of the para-homological equation (\ref{ParaHomoHamilton}), we determine the constants $\xi,\eta$ in terms of $u$ via the operators $P^x[u],P^y[u]$. We will then be able to cancel all terms but the one linear in $E$ in the para-linearization formula (\ref{PLF(h,u)}). Thus the para-linearization formula (\ref{PLF(h,u)}) at $u$ reduces to
\begin{equation}\label{E=T_Eu}
E=\Op^\PM\left(\bm{B}(E,L[u]) M[u]^{-1}\right)(u-\zeta_0).
\end{equation}
Using Corollary \ref{B(E,L)}, we estimate
\begin{equation}\label{Est_B(E,L)}
\begin{aligned}
\big\|\Op^\PM&\left(\bm{B}(E,L[u]) M[u]^{-1}\right)(u-\zeta_0)\big\|_{H^s}\\
&\overset{\text{Prop. }\ref{ContPM}}{\leq} C_s\big|\bm{B}(E,L[u]) M[u]^{-1}\big|_{L^\infty}\|u-\zeta_0\|_{H^s}\\
&\overset{(\ref{ubound})}{\leq} C_sc_2\big(M_Q,|h|_{C^{N_{s+r}+3}}\big)\gamma^2\|e_0\|_{H^{s+2\sigma+\eps}}\big\|\bm{B}(E,L[u]) M[u]^{-1}\big\|_{H^{s-(\sigma+2)}}\\
&\overset{\text{Cor. }\ref{B(E,L)}}{\leq}
C_sc_2\big(M_Q,|h|_{C^{N_{s+r}+3}}\big)\gamma^3\|e_0\|_{H^{s+2\sigma+\eps}}\|E\|_{H^{s-1}}.
\end{aligned}
\end{equation}
Note that it is legitimate to apply Corollary \ref{B(E,L)}, since we already know that $u$ is close to $\zeta_0$, and therefore $E=\mathscr{F}(h_\xi,u)+(0,\mu)^\T$ is close to zero. Consequently, if we further require the function $c_0\ll 1/c_2$, then the factor 
$$
C_sc_2\big(M_Q,|h|_{C^{N_{s+r}+3}}\big)\gamma^3\|e_0\|_{H^{s+2\sigma+\eps}}
$$
in the right-hand-side of (\ref{Est_B(E,L)}) will be $\leq 1/2$ (remembering $e_0$ is of size $O(\gamma^{-4}c_0)$ and $\gamma$ is assumed to be large). Therefore, (\ref{Est_B(E,L)}) yields
$\|E\|_{H^s}\leq (1/2)\|E\|_{H^{s-1}}$, forcing $E=0$. 

To summarize, under the assumption of Theorem \ref{Thm1}, there is a solution $u$ of the the para-homological equation (\ref{ParaHomoHamilton}) with $\xi=0$ in (\ref{ParaHomoHamilton}), the constant vector $\mu$ is determined by $u$, and $u$ satisfies 
$$
E=\mathscr{F}(h,u)+\begin{pmatrix} 0 \\ \mu\end{pmatrix}=0.
$$
Under the assumption of Theorem \ref{Thm2}, there is a solution $u$ of the the para-homological equation (\ref{ParaHomoHamilton}), the parameters $\xi,\mu$ are determined by $u$, and $u$ satisfies 
$$
E=\mathscr{F}(h_\xi,u)+\begin{pmatrix} 0 \\ \mu\end{pmatrix}=0.
$$
By Lemma \ref{CounterTerm}, the proof of Theorerm \ref{Thm1}-\ref{Thm2} is complete.
% \begin{remark}
% The reason that we did not use para-composition in (\ref{PLF(h,u)}) is that $u$ is not a diffeomorphism. This is a technical difference from the discussion in Subsection \ref{RefinementRot}. Although it does not seem impossible to generalize para-composition estimates when $u$ is not a diffeomorphism, the estimates will be much more complicated than what we presented in this paper. It is simpler to consider the remainder $F(u)-T_{F'(u)}u$ as a whole.
% \end{remark}

\begin{remark}
Similarly as in Subsection \ref{ParaDiffIllu}, the right-hand-side of the para-inverse equation (\ref{ParaInvHamilton}) becomes continuously differentiable in $u$ if we assume higher regularity $h\in C^{N_{s+r}+4}$. This is indicated in the general para-linearization theorem \ref{Bony}. Thus if the errors $e_0$ and $e_1$ are even smaller, the right-hand-side of (\ref{ParaInvHamilton}) shall have Lipschitz constant less than 1, and a Banach fixed point argument will apply. In this case, the KAM conjugacy problem falls into the realm of standard implicit function theorems, which will immediately imply, for example, Whiteney differentiablity with respect to the frequency $\omega$. Compared to what is in the literature, this direct proof is much simpler.
\end{remark}

\subsection{Miscellaneous}\label{Miscellaneous}
We discuss the generality of the assumption of Theorem \ref{Thm1}-\ref{Thm2}. The idea of our discussion closely follows \cite{BB2015}. At a first glance, the coverage of Theorem \ref{Thm1}-\ref{Thm2} is very limited: the approximate invariant torus around which we seek for a true one has to be the ``flat" embedding $\theta\mapsto(\theta,0)$. However, under suitable geometric transformations, which are elementary and independent from KAM type results, it is possible to reduce quite general situations to this special case.

To be concrete, let the frequency $\omega$ satisfy (\ref{Dio}). Suppose $h(x,y)$ is an arbitrary Hamiltnonian function, not necessarily brought into the form (\ref{h(x,y)}). Let
$$
\zeta(\theta)=\begin{pmatrix}
\zeta^x(\theta) \\
\zeta^y(\theta)
\end{pmatrix}
$$
be an embedding from $\xT^n$ to the phase space, not necessarily close to the ``flat" embedding, such that $\zeta^x:\xT^n\mapsto\xT^n$ is a diffeomorphism, and $\zeta(\xT^n)$ is an approximately invariant torus with frequency $\omega$ under the flow of $h$: that is,
$$
\mathscr{F}(h,\zeta):=X_h(\zeta)-\nabla_\omega \zeta
$$
is close to 0. We aim to find suitable symplectic coordinate transformation that reduce this general approximate solution to the special case discussed in Theorem \ref{Thm1}-\ref{Thm2}.

Recall (\ref{L[u]}): we introduce
$$
L[\zeta]=(\partial \zeta)^\T \cdot J \partial \zeta\in\bm{M}_{n\times n}
$$
to measure the ``lack of isotropy" of the embedding $\zeta$, since the pull-back of the symplectic form $\diff\!x\wedge \diff\!y$ to $\xT^n$ via $\zeta$ has exactly the matrix representation $L[\zeta]$. We then have the following, as seen in (\ref{Lack}):
\begin{equation}\label{L[zeta]}
L[\zeta]=\partial\nabla_\omega ^{-1}\left[(\partial \zeta)^\T\cdot J\mathscr{F}(h,\zeta)\right]
-\partial\nabla_\omega ^{-1}\left[(\partial \zeta)^\T\cdot J\mathscr{F}(h,\zeta)\right]^\T.
\end{equation}

According to Lemma 6 of \cite{BB2015}, if one sets
$$
p(\theta)=\Delta^{-1}\partial\cdot L[\zeta]
=\left(\Delta^{-1}\sum_{j=1}^n\partial_jL_{kj}(\theta)\right)_{1\leq k\leq n},
$$
then the embedded torus
$$
\eta(\theta)
=\begin{pmatrix}
\eta^x(\theta) \\
\eta^y(\theta)
\end{pmatrix}
:=\begin{pmatrix}
\zeta^x(\theta) \\
\zeta^y(\theta)-\big(\partial\zeta^x(\theta)\big)^\T p(\theta)
\end{pmatrix}
$$
is isotropic. From (\ref{L[zeta]}), $p$, hence $\eta-\zeta$, is linear in $\mathscr{F}(h,\zeta)$, and for $s>1+n/2$,
$$
\|\eta-\zeta\|_{H^s}\leq C_s\gamma\|\zeta\|_{H^{s+\sigma+1}}\|\mathscr{F}(h,\zeta)\|_{H^{s+\sigma}},
$$
where $C_s$ only depends on $s$. Since
$$
\begin{aligned}
\mathscr{F}(h,\eta)
&=X_h\big(\zeta+(\eta-\zeta)\big)-\nabla_\omega \zeta
-\nabla_\omega (\eta-\zeta)\\
&=\mathscr{F}(h,\zeta)
+\int_0^1 (DX_h)\big(\zeta+\tau(\eta-\zeta)\big)(\eta-\zeta)\diff\!\tau
-\nabla_\omega (\eta-\zeta),
\end{aligned}
$$
it then follows that
$$
\|\mathscr{F}(h,\eta)\|_{H^s}
\leq C_s\gamma\big(1+|h|_{C^{N_{s+r}+3}}\big)
\big(1+\|\zeta\|_{H^{s+\sigma+2}}\big)\|\mathscr{F}(h,\zeta)\|_{H^{s+\sigma+1}},
$$
where $C_s$ only depends $s$ and $|\omega|$. In other words, if $\zeta(\xT^n)$ is an approximately invariant torus for $X_h$, then so is the isotropic torus $\eta(\xT^n)$, but the ``error of being invariant" is less regular compared to $\zeta(\xT^n)$.

Since $\eta$ is an isotropic embedding, the diffeomorphism in phase space
$$
G:\begin{pmatrix}
x \\
y
\end{pmatrix}\mapsto
\begin{pmatrix}
\eta^x(x) \\
\eta^y(x)+\big(\partial\eta^x(x)\big)^\T y
\end{pmatrix}
$$
is a symplectic one. The isotropic torus $\eta(\xT^n)$ is straightened to the ``flat" torus $\{y_1=0\}$ under the new symplectic coordinate $(x_1,y_1)=G^\iota(x,y)$. If we set
$$
H(x_1,y_1)
:=(h\circ G)(x_1,y_1)
=a_0(x_1)+\langle a_1(x_1),y_1\rangle
+\frac{1}{2}\langle Q(x_1)y_1,y_1\rangle+O(|y_1|^3)
\quad\text{near }y=0,
$$
then the Hamiltonian vector field $X_h$ is changed to $X_H=(DG)^{-1}X_h\circ G$. Furthermore, the ``error of being invariant" is transformed to
$$
(DG)^{-1}\mathscr{F}(h,\eta)
=X_H(\zeta_0)-\begin{pmatrix}\omega \\ 0 \end{pmatrix}
=\begin{pmatrix}
a_1(\theta)-\omega \\ 
\partial a_0(\theta) \end{pmatrix}.
$$
Thus $a_0$ is almost constant, and $a_1$ is almost equal to $\omega$. We then set $S=\nabla_\omega ^{-1}(Q-\Avg Q)$, and introduce another canonical variable $(x_2,y_2)$, defined by
$$
\begin{aligned}
x_1&=x_2+S(x_1)y_2\\
y_1&=y_2-\frac{1}{2}\partial S(x_1)(y_2,y_2).
\end{aligned}
$$
The symplectic diffeomorphism $\Gamma$ maps the isotropic torus $\{y_2=0\}$ to $\{y_1=0\}$, while the Hamiltonian function 
$$
\begin{aligned}
H(x_1,y_1)
&=\left(a_0(x_1)-\Avg a_0\right)
+\langle a_1(x_1)-\omega,y_1\rangle\\
&\quad+\Avg a_0+\langle \omega,y_2\rangle
+\frac{1}{2}\langle (\Avg Q)y_2,y_2\rangle+O(|y_2|^3).
\end{aligned}
$$
We thus find that the first three Taylor coefficients of $H(x_1,y_1)$ with respect to $y_2$ around $y_2=0$ will be close to 
$$
\Avg a_0,\quad \omega,\quad \Avg Q,
$$
respectively. Consequently, under the new symplectic coordinate $(x_2,y_2)$, the Hamiltonian function and approximately invariant torus are exactly in the form discussed by Theorem \ref{Thm1}-\ref{Thm2}.

In conclusion, we have successfully reduced the general situation to the special case of Theorem \ref{Thm1} without any assumption on smallness of $\zeta^x,\zeta^y$. For Hamiltonian functions depending on parameters, the argument will be exactly the same. 

\appendix
\section{``Direct Method" for Conjugacy Problems}\label{AppA}
In this appendix, we explain why we prefer the ``indirect method" to study conjugacy problems by comparing it with the ``direct method", to be specified shortly. Since this is is only a complementary exposition, we will not be concerned with the choice of appropriate function spaces. The somewhat formal computation already suggests that the ``direct method" is technically much more cumbersome compared to the ``indirect method".

We first sketch how the conjugacy problem for circular maps can be solved in a ``direct" way. The idea seems to come from the lecture notes of H\"{o}rmander \cite{Hormander1977}. Define a nonlinear mapping
$$
\mathscr{G}(u,\lambda)=\big[\Delta_\alpha u\big]\circ(\Id+u)^\iota+\lambda,
$$
which appears in (\ref{ConjRot1}). Computing the linearization of $\mathscr{G}$ at $(u,\lambda)$ close to $(0,0)$, we find
\begin{equation}
\begin{aligned}
D\mathscr{G}(u,\lambda)(v,\mu)
&=\big[\Delta_\alpha v\big]\circ(\Id+u)^\iota\\
&\quad+\left(\big[\Delta_\alpha(u+tv)'\big]\circ(\Id+u+tv)^\iota\right)\cdot\frac{d}{dt}(\Id+u+tv)^\iota\Big|_{t=0}
+\mu \\
&=\left[(1+u'\circ\tau_\alpha)\Delta_\alpha\left(\frac{v}{1+u'}\right)\right]\circ(\Id+u)^\iota+\mu.
\end{aligned}
\end{equation}
The linearized equation $D\mathscr{G}(u,\lambda)(v,\mu)=h$ is then solved by
\begin{equation}\label{RotInvExact}
v=(1+u')\Delta_\alpha^{-1}\left(\frac{h\circ(\Id+u)-\mu}{1+u'\circ \tau_\alpha}\right),
\end{equation}
where $\mu$ is the unique real number making the function inside the bracket to have average 0. Thus, if we consider equation (\ref{ConjRot1}) instead of (\ref{ConjRot0}), then \emph{the linearized equation is exactly solvable}.

Note that product and composition of mappings in the grading $\cap_{r\geq0}C_*^r$ satisfy tame estimates. Using these tame estimates, we find that the solution $(v,\mu)$ of the linearized equation $D\mathscr{G}(u,\lambda)(v,\mu)=h$ satisfies the following tame estimate:
$$
|v|_{C_*^r}+|\mu|
\lesssim_r \big(1+|u|_{C_*^{2+1/2}}\big)|h|_{C_*^{r+s}}+|h|_{C_*^{1/2}}\big(1+|u|_{C_*^{r+s+1}}\big),
\quad
\text{where }s>\sigma+1.
$$
We can finally apply any Nash-Moser type theorem to conclude that, at least for very regular $f$ with small magnitude, there is a solution $u$ of (\ref{ConjRot1}). 

In this case, the para-differential approach is still valid to produce an alternative proof. We suppose in apriori that $u$ is sufficiently smooth. For simplicity of notation, write $W=W(u)=(\Id+u)^\iota$. Let us try to decompose the nonlinear mapping
$$
u\mapsto
(\Delta_\alpha u)\circ W
=(\Delta_\alpha u)\circ(\Id+u)^\iota
$$
in the left-hand-side of (\ref{ConjRot1}) using appropriate para-differential operators. By Probosition \ref{PLCRough},
\begin{equation}\label{RotCompo0}
(\Delta_\alpha u)\circ W
=W^\star\Delta_\alpha u
+T_{(\Delta_\alpha u')\circ W}W
+\text{smoother remainder}.
\end{equation}
We leave the first term in (\ref{RotCompo0}) unchanged and concentrate on the para-product term, aiming to express $W(u)$ in terms of the para-composition operator. In order to do this, we para-linearize the identity $(\Id+u)\circ W(u)=\Id$ to find
$$
W(u)+W^\star u+T_{u'\circ W}W(u)
=\Id+\text{smoother remainder}.
$$
Solving for $W(u)$, we thus obtain
\begin{equation}\label{RotInv}
\begin{aligned}
W(u)=-T_{(1+u'\circ W)^{-1}}W^\star u+\Id+\text{smoother remainder}.
\end{aligned}
\end{equation}
We then cite a proposition on conjugation of para-differential operators using para-composition, which is a special case of the conjugation theorem proved in \cite{Alinhac1986,ngu2016} or \cite{Said2023}:
\begin{proposition}
Let $r>0,\rho>1$. Suppose $a(x,\xi)=\sum_{|\alpha|\leq m}a_\alpha(x)(i\xi)^\alpha$ is a classical symbol with $C^r_*$ coefficients on $\xT^n$. Suppose $\chi:\xT^n\mapsto\xT^n$ is a $C^\rho_*$ diffeomorphism. Then
$$
\chi^\star T_a=T_{a^\star}\chi^\star+\R_{\mathrm{conj}}(\chi,a),
$$
where 
$$
a^*(x,\xi)
:=\sum_{|\beta|=0}^{[\rho]}\frac{1}{i^{|\beta|}|\beta|!}\partial_y^\beta\partial_\xi^\beta
\left(a\big(\chi(x),R(x,y)^{-1}\xi\big)
\left|\frac{\det\chi'(y)}{\det R(x,y)}\right|\right)
\Bigg|_{y=x},
$$
with 
$$
R(x,y)=\int_0^1\chi'(tx+(1-t)y)^{\mathrm{T}}dt,
$$
while the remainder operator maps $H^s$ to $H^{s-m+\min(r,\rho-1)})$, and satisfies
$$
\|\R_{\mathrm{conj}}(\chi,a)\|_{\mathcal{L}(H^s,H^{s-m+\min(r,\rho-1)})}
\leq K_s\big(|\chi|_{C^\rho_*}\big)\sum_{|\alpha|\leq m}|a_\alpha|_{C^r_*}.
$$
\end{proposition}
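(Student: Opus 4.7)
The plan is to verify the identity dyadic block by dyadic block, reducing by linearity in $a$ to the case of a single monomial $a(x,\xi)=a_\alpha(x)(i\xi)^\alpha$, so that $T_a = T_{a_\alpha}\partial^\alpha$. The goal is to produce the symbol $a^\star$ through a Taylor/stationary-phase expansion of the change of variables $x\mapsto\chi(x)$ inside an oscillatory-integral representation of $T_a u$, and to show that the truncation tail at order $[\rho]$ yields a bounded operator $H^s\to H^{s-m+\min(r,\rho-1)}$.

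Concretely, by the definition of para-composition,
$$
\chi^\star T_a u=\sum_{j\geq0}(S_{j+N}-S_{j-N})\big((\Delta_j T_a u)\circ\chi\big),
$$
and each block $\Delta_j T_a u$ is frequency-localized near $2^j$, admitting a Fourier-integral representation with amplitude essentially $a(x,\xi)\widehat{u}(\xi)$ truncated at frequency $\sim 2^j$. The key algebraic identity is the Hadamard expansion $\chi(x)-\chi(y)=R(x,y)(x-y)$, which, after a change of the covariable $\eta=R(x,y)^{\mathrm{T}}\xi$, reduces the oscillation of $e^{i\xi\cdot\chi(x)}$ to a linear phase in $x-y$; the Jacobians $|\det\chi'(y)/\det R(x,y)|$ and the shift $R(x,y)^{-1}\xi$ that appear in the definition of $a^\star$ arise precisely from this substitution. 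Taylor-expanding the resulting amplitude in the variable $y$ about $y=x$ up to order $[\rho]$, then integrating by parts in $\xi$ (trading $x-y$ factors for $\partial_\xi$), produces exactly the $|\beta|\leq[\rho]$ terms in the series defining $a^\star$; the leading term, after accounting for the $(S_{j+N}-S_{j-N})$ cutoffs, assembles to $T_{a^\star}\chi^\star u$.

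The remainder $\R_{\mathrm{conj}}(\chi,a)$ then splits into three pieces to be controlled. The dominant contribution comes from the Taylor remainder at order $[\rho]+1$: using the $C^\rho_*$ regularity of $\chi$, hence of $R$, and the $C^r_*$ regularity of the coefficients $a_\alpha$, one gains $\min(r,\rho-1)$ derivatives after undoing the $\partial^\alpha$ and accounting for the $m$ derivatives costed by the symbol. A secondary contribution comes from switching between para-multiplication and genuine multiplication of the symbol $a_\alpha\circ\chi$ on the external side, which is controlled by Proposition \ref{ContCM}. The last contribution, arising from the $(S_{j+N}-S_{j-N})$ cutoff mismatch between the two sides, is handled by a Meyer-multiplier argument: each multiplier $m_j$ so produced satisfies the hypothesis of Corollary \ref{Meyer} with gain $r$, and Corollary \ref{SpecCondition} lets us sum the dyadic pieces in $H^{s-m+\min(r,\rho-1)}$ without the usual positivity constraint on the index. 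The quantitative operator-norm bound then depends polynomially on $|\chi|_{C^\rho_*}$ through the derivatives of $R$ and its inverse, and linearly on $\sum_{|\alpha|\leq m}|a_\alpha|_{C^r_*}$.

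The main obstacle will be the bookkeeping when $\rho$ is non-integer, so that $[\rho]<\rho$: to extract the full $\min(r,\rho-1)$ smoothing one cannot simply bound the Taylor remainder pointwise, but must replace it by a Zygmund-space endpoint estimate exploiting $C^\rho_*\subset C^{[\rho]}$ with a controlled logarithmic loss at $|\alpha|=[\rho]$, parallel to the role played by $j|f|_{C^r_*}$ in Proposition \ref{ZygmundDecay}. A secondary technical issue is verifying that the $(S_{j+N}-S_{j-N})$ projection in $\chi^\star$ and the spectral support of $T_{a^\star}(\chi^\star u)$ lie in compatible frequency annuli, so that no spurious low-frequency residue pollutes the right-hand side; this is where the condition $\rho>1$ enters, ensuring that $\chi$ is at least $C^1$ and hence $\chi^\star$ preserves frequency localization up to a bounded shift in $j$.
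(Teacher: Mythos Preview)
The paper does not give its own proof of this proposition: it is stated in Appendix~A as a citation, introduced with ``We then cite a proposition on conjugation of para-differential operators using para-composition, which is a special case of the conjugation theorem proved in \cite{Alinhac1986}, \cite{ngu2016} or \cite{Said2023}.'' So there is nothing to compare against in the paper itself.

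Your outline is a fair sketch of the Alinhac-style argument found in those references: the Hadamard linearization $\chi(x)-\chi(y)=R(x,y)(x-y)$ followed by the covariable substitution and a Taylor expansion of the amplitude in $y$ at $y=x$ is exactly the mechanism that produces the series for $a^\star$, and the three-way decomposition of the remainder (Taylor tail, para-product versus product, cutoff mismatch) is the standard bookkeeping. The place where your sketch is thinnest is the actual estimation of the Taylor remainder at order $[\rho]+1$ when $\rho$ is non-integer: you correctly flag this as the main obstacle, but the resolution in the cited works is not a ``Zygmund endpoint with logarithmic loss'' as you suggest---rather one writes the integral remainder and exploits that the $[\rho]$-th derivatives of $\chi$ lie in $C^{\rho-[\rho]}_*$, gaining exactly $\rho-1$ (not $[\rho]-1$) after accounting for the one derivative already spent on $R$. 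If you were to flesh this out, that is the step to get right; the rest of your plan matches the literature.
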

Thus, (\ref{RotCompo0}) is transformed to
\begin{equation}\label{RotCompo1}
\begin{aligned}
(\Delta_\alpha u)\circ W
&=W^\star\Delta_\alpha u
-T_{(\Delta_\alpha u')\circ W}T_{(1+u'\circ W)^{-1}}W^\star u
+\text{smoother remainder}\\
&=W^\star\Delta_\alpha u
-W^\star T_{(\Delta_\alpha u')/(1+u')}u
+\text{smoother remainder}\\
&=W^\star T_{(1+u'\circ\tau_\alpha)}\Delta_\alpha T_{1/(1+u')}u
+\text{smoother remainder}.
\end{aligned}
\end{equation}
The original nonlinear equation (\ref{ConjRot1}) then becomes
\begin{equation}\label{RotEquiv}
W^\star T_{(1+u'\circ\tau_{\alpha})}\Delta_\alpha T_{1/(1+u')}u
+\lambda
=f+\text{smoother remainder},
\end{equation}
which may still be named \emph{para-homological equation}. (\ref{RotEquiv}) can still be converted to a fixed point type equation:
\begin{equation}\label{RotEquivDirect}
u=T_{1/(1+u')}^{-1}\Delta_\alpha^{-1}T_{(1+u'\circ\tau_{\alpha})}^{-1}(W^\star)^{-1}\big(f-\lambda-\text{smoother remainder}\big).
\end{equation}
The $\lambda$ can be uniquely determined by balancing the mean value, so that $\Delta_\alpha^{-1}$ can be applied. 

(\ref{RotCompo1}) is exactly the \emph{para-differential version} of the linearization of $\mathscr{G}(u,\lambda)$, or to put simply, the \emph{para-linearization} of $\mathscr{G}(u,\lambda)$. Although quite parallel to the ``indirect method", we still prefer the ``indirect method" instead. The reason is that the fixed point equation produced by the ``indirect method" is technically less involved than the ``direct method". Even in this simple illustrative model, the fixed point equation (\ref{RotEquivDirect}) requires more para-composition estimates compared to (\ref{RotParaInv}): it calls for knowledge about inversion of para-compositions, together with conjugation with para-compositions, while (\ref{RotParaInv}) relies only on the para-linearization formula.

Let us finally discuss the Hamiltonian conjugacy problem. As pointed out by Féjoz \cite{Fejoz2016}, given a Hamiltonian function $h$ close to a normal form, if one aims to find a normal form $H$, a symplectic diffeomorphism $\Gamma$ and a frequency shift $\beta$ solving
\begin{equation}\label{HermanNormalApp0}
h=H\circ\Gamma+\beta\cdot y,
\end{equation}
then the linearized equation will be exactly solvable in a neighbourhood of $(\Gamma,\beta,H)=(\Id,0,h_0)$. On the other hand, if one considers instead
$$
h\circ\Gamma-\beta\cdot (y\circ\Gamma)=H,
$$
the linearized equation will only admit approximate solution. The form (\ref{HermanNormalApp0}) then brings a lot of convenience for the Nash-Moser scheme. However, as with the circular map problem discussed earlier, the para-linearization of (\ref{HermanNormalApp0}) is technically more complicated compared to the ``indirect method" since it inevitably involves numbers of para-composition estimates, though the general idea remains unchanged. In summary, we would prefer the ``indirect method" discussed in this paper instead, where we solve the para-inverse equation and conclude that it annihilates the nonlinear mapping by a Neumann series argument.

\bibliographystyle{alpha}
\bibliography{References}

\end{spacing}
\end{document}